\newcolumntype{P}[1]{>{\centering\arraybackslash}p{#1}}
\newtheorem{theorem}{Theorem}
\newtheorem{conjecture}{Conjecture}
\newtheorem{corollary}{Corollary}
\newtheorem{definition}{Definition}
\newtheorem{example}{Example}
\newtheorem{proposition}{Proposition}
\newtheorem{remark}{Remark}
\def\frac#1#2{{\begingroup #1\endgroup\over #2}}
\newcommand\restr[2]{{
  \left.\kern-\nulldelimiterspace 
  #1 
  \right|_{#2} 
  }}
\begin{document}

\title{Dessins d'Enfants of Trigonal Curves}

\author{MEHMET EMIN AKTA\c{S}}
\address{Department of Mathematics, Florida State University, Tallahassee, Florida 32306}
\email{maktas@math.fsu.edu}
\subjclass[2010]{Primary 14H57, 14H45; Secondary 14H30}



\keywords{Trigonal curves, dessins d'enfants, $j$-invariant}


\begin{abstract}    
In this paper, we focus on properties of dessins d'enfants associated to trigonal curves. Degtyarev studied dessins d'enfants to compute braid monodromies and fundamental groups of trigonal curves using their combinatorial data. We first classify all possible combinatorial data that can occur for trigonal curves of low degree, as well as bounds on the number of possibilities for all degree. We also study deformations of trigonal curves and corresponding deformations of their dessins. Of special interest to Degtyarev was the case when the dessins are maximal. We give a sufficient condition for a trigonal curve to be deformable to one that is maximal.

\end{abstract}

\maketitle


\section{Introduction}
A dessin d'enfant, means ``child's drawing" in French, is a type of an embedded graph satisfies the following two structures:
\begin{enumerate}
\item there is a cyclic ordering on the edges of each vertex,
\item each vertex is assigned one of two colors, black or white, and the two end points of each edge have different colors.
\end{enumerate}

It was first introduced by Felix Klein \cite{klein1879ueber}. After a century, it was rediscovered and named by A. Grothendieck \cite{groth} for studying rational maps with three critical points. 

Dessins can arise from a finite covering $j:S \rightarrow P^1$, where $P^1$ is a Riemann sphere and $S$ is a Riemann surface, whose ramification locus includes $0,1, \infty$. For such a covering, one constructs the dessin as follows:

\begin{definition}\em
The \textit{dessin associated to the covering} $j$ is the embedded graph $\Gamma=j^{-1}([0,1]) \in P^1$ where $j^{-1}(0)$ are $\bullet$- vertices and $j^{-1}(1)$ are $\circ$- vertices.   
\end{definition}

Recently, A. Degtyarev associates dessins with trigonal curves to study the topology of these curve complements \cite{degtyarev2012topology}. He defines an orientation preserving covering $j_C: P^1 \rightarrow P^1$ for a given trigonal curve $C$ to define the dessin associated to $C$. Then he uses the combinatorial data of the dessins to compute the braid monodromy of the curve complements which can be used to compute their fundamental group \cite{degtyarev2009plane,degtyarev2009zariski, degtyarev2010Dessinplane,degtyarev2010plane,degtyarev2011hurwitz, degtyarev2011fundamental} and Alexander polynomial \cite{degt_alex_module, degt_2alex_module}. In other words, he defines the following two maps

$$
\mathfrak{T} \xrightarrow[]{\Omega_1} \mathfrak{D} \xrightarrow[]{\Omega_2} \mathfrak{G}
$$

where $\mathfrak{T}$ is the set of all trigonal curves, $\mathfrak{D}$ is the set of all dessins d'enfants, $\mathfrak{G}$ is the set of fundamental groups of the trigonal curve complements, $\Omega_1$ sends a trigonal curve to its dessin d'enfant and $\Omega_2$ sends a dessin d'enfant to the fundamental group of the corresponding trigonal curve complements. He defines $\Omega_2$ only when the dessin is maximal (will be defined later).

The map $\Omega_1$ is neither one-to-one, since two different trigonal curves can have the same dessin up to the graph isomorphism, nor onto, since not all dessin can be constructed using trigonal curves.

\textbf{Main results.} In this paper, we focus on the map $\Omega_1$. We first restrict $\Omega_1$ to completely reducible trigonal curves (CRTC). A trigonal curve is completely reducible if its affine part is defined by $(y-y_1)(y-y_2)(y-y_3)=0$ where $y_i \in \mathbb{C}[x]$ for all $i\in \{1,2,3\}$. We study the \textit{combinatorial data} (Section 4), \textit{deformations} (Section 5) and \textit{maximality} (Section 6) of the dessins of the completely reducible trigonal curves. 

We first fix the type of the combinatorial data as follows: 
\begin{definition}\em
Two dessins are in the same combinatorial type if both have the same number of \textit{regions} and \textit{edges in each region}.
\end{definition}
This definition is motivated from Degtyarev's method in using the dessins to study topology. We first find an upper bound for the number of different combinatorial types for a fixed degree:

\begin{theorem}
\em Let $C$ be a completely reducible trigonal curve with the maximal degree $n$, i.e. $n$ is the maximum of the degrees of its three components. Let $Comb(n)$ be the set of all combinatorial types of dessins of $C$. Then, we have 
$$
|Comb(n)|\leq \frac{p(n)(p(n)+1)(p(n)+2)}{6} - K(n)
$$
with
$$
K(n)=\sum_{r=1}^{\lfloor \frac{n+1}{3} \rfloor}p(n,r)\sum_{j=r}^{\lfloor \frac{n+1-r}{2} \rfloor,}p(n,j)\sum_{i=j}^{n-j-r+1} p(n,i).
$$
where $p(n)$ is the number of integer partitions of $n$, $p(n,m)$ is the number of integer partitions of $n$ with length $m$.
\end{theorem}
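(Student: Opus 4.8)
The plan is to turn the problem into pure polynomial bookkeeping: show that the combinatorial type of the dessin of a completely reducible trigonal curve is a function of the root‑multiplicity data of the three branch differences, count that data by partitions, and then delete the profiles that cannot be realized. First I would pass from the affine model $(y-y_1)(y-y_2)(y-y_3)=0$ to Weierstrass form $y^3+py+q$ and recall that, up to a nonzero constant, the discriminant is $\Delta=(y_1-y_2)^2(y_1-y_3)^2(y_2-y_3)^2$. The covering $j_C$ has its poles exactly at the zeros of $\Delta$, the regions of $\Gamma=j_C^{-1}([0,1])$ are in bijection with these poles, and the number of edges bordering a region is governed by the pole order, hence by the multiplicity of the corresponding zero of $\Delta$. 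Thus the combinatorial type is read off from the multiplicities of the roots of the three differences $d_{12}=y_1-y_2$, $d_{13}=y_1-y_3$, $d_{23}=y_2-y_3$, counted on $P^1$ (so the point at infinity absorbs any degree drop). Each $d_{ij}$ then determines a partition of $n$, and since relabeling the branches merely permutes the three differences, the type is captured by the unordered triple of these partitions.

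Next I would count this data. As there are $p(n)$ partitions of $n$, the number of unordered triples is the number of size‑$3$ multisets drawn from a $p(n)$‑element set, namely $\binom{p(n)+2}{3}=\frac{p(n)(p(n)+1)(p(n)+2)}{6}$. Because the combinatorial type is a function of the triple, $|Comb(n)|$ is at most this quantity; this supplies the leading term.

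The correction comes from the fact that the three differences are constrained by the linear relation $d_{12}+d_{23}=d_{13}$. On the generic locus, where no three branches meet and the $d_{ij}$ are pairwise coprime, I would apply the Mason–Stothers theorem (the polynomial $abc$ theorem) to this relation, giving $\max(\deg d_{12},\deg d_{13},\deg d_{23})\le N_0(d_{12}d_{13}d_{23})-1$, where $N_0$ counts distinct roots. Writing $r,j,i$ for the lengths (numbers of parts) of the three partitions and noting that the relation forces exactly one of the differences to drop degree, contributing a single root at infinity, the number of distinct affine roots is $r+j+i-1$; since the maximal degree equals $n$, this yields $r+j+i\ge n+2$. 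Consequently every triple whose length profile satisfies $r+j+i\le n+1$ is non‑realizable and may be removed. Ordering $r\le j\le i$ with $r\ge1$, the constraint $r+j+i\le n+1$ gives precisely the ranges $1\le r\le\lfloor(n+1)/3\rfloor$, $r\le j\le\lfloor(n+1-r)/2\rfloor$, and $j\le i\le n+1-r-j$; summing the products $p(n,r)\,p(n,j)\,p(n,i)$ over these ranges produces $K(n)$, and subtracting from the leading term yields the stated inequality.

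The delicate steps, and the ones I expect to be the main obstacle, are twofold. The first is justifying rigorously that the combinatorial type depends only on the triple of partitions — that the pole orders of $j_C$ are genuinely determined by the root multiplicities of the $d_{ij}$, with correct behaviour at infinity and at the excluded triple‑contact points where all three branches coincide. The second is matching the Mason–Stothers bound to the exact cutoff $r+j+i\le n+1$: this rests on the bookkeeping of the single forced degree drop and on controlling the non‑generic configurations (coincident lengths among the partitions, and triple points) so that the enumeration of deleted profiles remains an honest lower bound for the count of non‑realizable triples, which is what makes the subtraction a valid upper bound.
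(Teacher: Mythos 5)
Your overall architecture --- count unordered triples of partitions of $n$, then subtract the length profiles with too few parts --- parallels the paper's, and your final formula is numerically the paper's, but the objects you count are the wrong ones, and this breaks the logic. Your foundational claim is that the combinatorial type is a function of the root-multiplicity data of $d_{12},d_{13},d_{23}$. This is false: the type records region sizes, and a $2m$-gonal region of $\Gamma_C$ may contain \emph{several} distinct $\times$-vertices whose multiplicities merely sum to $m$ (the paper makes this explicit in Section~\ref{CH:5}); how the zeros of a given difference group into regions is extra data, invisible to the multiplicities. The paper's own degree-$2$ examples (Table~\ref{table:1}) give a counterexample: for $(y-x^2+1)(y+x)(y-x)$ and $(y-x^2+1)(y+x+0.25)(y-x+0.25)$ all three differences have only simple roots, so both curves have the same multiplicity triple $(\{1,1\},\{1,1\},\{1,1\})$ (in each case $d_{23}=-2x$ contributes one root at $0$ and one at $\infty$), yet their types are $[4,4,2,2]$ and $[2,2,2,2,2,2]$. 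Since the type does not factor through your triple, the number of realizable triples does not bound $|Comb(n)|$. The paper avoids this by attaching to each \emph{type} the triple of partitions formed by half the region sizes in the three colour classes $RB$, $BG$, $RG$ (each class has exactly $2n$ edges, Proposition~\ref{RBG}); in those terms the two curves above have the distinct triples $(\{2\},\{2\},\{1,1\})$ and $(\{1,1\},\{1,1\},\{1,1\})$, and the assignment is well defined by construction.

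There is a second, independent gap in your cutoff. The claim that $d_{12}+d_{23}=d_{13}$ forces \emph{exactly one} difference to drop degree is wrong: if $y_1,y_2,y_3$ have pairwise distinct leading coefficients, no difference drops degree and no root sits at infinity (e.g.\ $y_1=x,\ y_2=2x+1,\ y_3=3x+5$), while if all three leading coefficients agree, all three differences drop. In the no-drop case the affine Mason--Stothers bound gives only $r+j+i\ge n+1$, one short of the cutoff you need. The correct inequality $r+j+i\ge n+2$ (counting points on $P^1$) comes from Riemann--Hurwitz applied to the degree-$n$ map $\lambda=d_{13}/d_{23}\colon P^1\to P^1$ over the three points $0,1,\infty$ simultaneously, namely $2n-2\ge 3n-(r+j+i)$; this is exactly what the paper's Euler-characteristic argument encodes at the level of regions (Proposition~\ref{numberofregions}, $R_{\Gamma_C}\ge n+2$). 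Note also that what must enter the exclusion count $K(n)$ is the number of \emph{regions}, not the number of $\times$-vertices: a region can contain many $\times$-vertices, so a lower bound on distinct roots does not by itself exclude region-size profiles with fewer than $n+2$ parts. To repair the proof you would have to replace both ingredients by the paper's versions: partitions read off from region sizes, and the region-count bound from the Euler characteristic.
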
  

Furthermore, we classify all possible combinatorial types of dessins up to the maximal degree 4. We prove that for the maximal degrees $1,2,3,4$, there are $1,3,8,22$ combinatorial types respectively (see Section \ref{spec_deg} for details). 

Beside the combinatorial types of dessins, we study the deformations of dessins d'enfants. We first prove that a dessin d'enfant can have four different elementary moves. We then show that every dessin d'enfant is a deformation of a simple dessin d'enfant (a dessins d'enfant is simple if it has $\bullet$- vertices of degree 3, $\circ$- vertices of degree 2 and no monochrome vertex, the vertices arise from the ramification points other than $0,1, \infty$). We prove that simple dessins d'enfants form open subsets in the deformation space of dessins d'enfants. We also find an approximation for the the number of simple dessins d'enfants for a fixed maximal degree $n$ which is 
$$
u_n\sim \frac{(3n)!}{6^{2n}(n!)^2} \exp(2 \minus \frac{2}{9n}+O(n^{\minus 2})).
$$

Lastly, as it is mentioned before, A. Degtyarev uses dessins d'enfants to compute the fundamental group when they are maximal. A dessin d'enfant is maximal if it is connected, has no monochrome vertex and there is only one singular fiber in each region. In this paper, we find a sufficient condition to deform non-maximal dessins d'enfants into maximal ones.  In Section \ref{CH:5}, we prove the following theorem:

\begin{theorem}\em
A non-maximal dessin d'enfant of a completely reducible trigonal curve can be deformed into a maximal dessin d'enfant if its regions that have more than one singular fiber include monochrome vertices that connect all the singular fibers.
\end{theorem}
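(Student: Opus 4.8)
The plan is to build the required deformation as a finite composition of the four elementary moves from Section 5, each of which is realized by a genuine perturbation of the three defining polynomials $y_1,y_2,y_3$ of the completely reducible trigonal curve (CRTC), and to drive the dessin toward maximality by removing its possible defects one at a time. Recall that a maximal dessin must be connected, free of monochrome vertices, and such that every region carries a single singular fiber. Since the curve is completely reducible, I would work directly with the combinatorics of the $\times$-vertices (the singular fibers) and of the monochrome subgraph. Connectedness is the least of the issues: for a dessin of a CRTC it either holds outright or can be restored by the appropriate elementary move, and subdividing a face never disconnects an embedded graph, so I would assume it from the outset and focus on the other two defects.

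First I would introduce the defect measure $\delta=\sum_R (s(R)-1)$, where the sum runs over the regions $R$ of the dessin and $s(R)$ is the number of singular fibers contained in $R$; thus $\delta=0$ precisely when every region carries a single singular fiber. The strategy is an induction on $\delta$. The inductive step is the geometric heart of the argument: given a region $R$ with $s(R)\geq 2$, the hypothesis guarantees that the $\times$-vertices of $R$ are joined inside $\overline{R}$ by monochrome vertices and edges, that is, the collapsed, degenerate interior walls of $R$ are present as a connected monochrome subgraph spanning all of its singular fibers. I would then apply the elementary move that perturbs such a monochrome configuration, promoting one of these degenerate interior walls to a genuine boundary of the dessin and thereby splitting $R$ into two regions $R_1,R_2$ with $s(R_1),s(R_2)<s(R)$ and $s(R_1)+s(R_2)=s(R)$. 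This strictly lowers $\delta$, and because the spanning monochrome structure is supplied by the hypothesis, such a separating wall always exists as long as $s(R)\geq 2$.

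Concretely, I would justify that each split is an admissible deformation of the curve: a monochrome vertex corresponds to a real critical point of $j_C$ lying over one of the open arcs of $\mathbb{RP}^1\setminus\{0,1,\infty\}$, and a small change of the coefficients of $y_1,y_2,y_3$ pushes this critical value off the arc, which is exactly the combinatorial effect of un-collapsing the wall. Iterating, the induction terminates at a dessin with $\delta=0$. Since each splitting move consumes the monochrome vertices forming the separating wall and converts them into honest $\bullet$-, $\circ$- and $\times$-vertices, no monochrome vertex of a formerly bad region survives; any monochrome vertices left over in regions that already carry a single singular fiber can be cleared by the remaining elementary moves without altering the distribution of singular fibers. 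The resulting dessin is connected, monochrome-free, and has one singular fiber per region, hence maximal.

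The step I expect to be the main obstacle is the realizability and bookkeeping of the splitting move. One must verify that the abstract move on the dessin is induced by an honest deformation within the space of completely reducible trigonal curves, so that the perturbed $j$-invariant is again that of a triple $(y_1,y_2,y_3)$, and that pushing one critical value off its arc does not simultaneously merge singular fibers in a neighboring region or create a new monochrome vertex that reintroduces the monochrome defect. Making precise the claim that the monochrome vertices connecting all the singular fibers yield, at each stage, a wall whose removal separates exactly the correct singular fibers is the crux of the argument, and is precisely where the sufficiency hypothesis of the theorem is used.
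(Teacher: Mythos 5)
Your strategy runs in the opposite direction from the one the hypothesis actually supports, and its central move is not realizable as described. The monochrome vertices supplied by the hypothesis live in $D=j_C^{-1}(P^1_{\mathbb{R}})$ \emph{inside} the open regions of $\Gamma_C$: since the interior of a region is disjoint from $j_C^{-1}([0,1])$, the critical values of those critical points lie on the real arcs $(1,\infty)$ or $(-\infty,0)$, and the ``walls'' joining them to the $\times$-vertices are preimages of those arcs, not edges of the dessin. Pushing such a critical value off its real arc changes the auxiliary graph $D$ but leaves $\Gamma_C=j_C^{-1}([0,1])$ --- and hence every region of the dessin --- completely unchanged; it cannot split anything. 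To split a region of the dessin you would need a critical value to cross the segment $(0,1)$ itself (a monochrome modification), and the hypothesis provides no such critical point, nor any guarantee that a deformation of $(y_1,y_2,y_3)$ realizing the desired split exists within the space of completely reducible curves. Indeed, whether a region containing several $\times$-vertices joined by monochrome structure can be perturbed so that each $\times$-vertex acquires its own region is exactly the question the paper flags as open in the remark following Proposition \ref{Prop:max} (whether Figure \ref{maxifigure}-II can be perturbed to Figure \ref{maxifigure}-III); your induction on $\delta$ rests entirely on that unestablished step.

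The paper's proof goes the other way: it uses the connectivity hypothesis to \emph{merge}, not to separate. By Proposition \ref{Prop:maxx}, all $\times$-vertices in a given region are zeros of a single polynomial $y_i-y_j$; deforming that polynomial so that these zeros collide merges them into one $\times$-vertex of higher multiplicity, and the monochrome vertex connecting them is precisely what makes this an equisingular degeneration of the region, i.e.\ a deformation supported inside the region that does not disturb the rest of the dessin (Proposition \ref{Prop:max}). After this, every region carries exactly one (possibly degenerate) singular fiber, and connectedness is restored by merging a $\circ$-vertex of one component with one of another, which changes neither the number of regions nor their sizes. Degeneration in this direction is cheap --- one only has to make roots of a polynomial collide --- whereas your splitting direction would require an existence theorem for perturbations that the paper does not have (and is, in effect, its concluding conjecture). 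To salvage your approach you would have to prove that perturbation statement; as it stands, the proposal has a genuine gap.
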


\textbf{Organization of the paper.} The paper is structured as follows: In Section 2, we introduce the the main ingredients of the study, i.e. the trigonal curves, the $j$-invariant and dessins d'enfants. In Section 3, we discuss the combinatorial properties of the dessins d'enfants of CRTCs, state the combinatorial type theorem and also classify the combinatorial types up to the maximal degree $4$. Important facts on deformations of dessins are presented in Section 4. We introduce a sufficient condition where a dessin d'enfant can be maximal in Section 5 and finally we conclude our work in Section 6. 

\section{Preliminaries}\label{section:2}

In this section, we first introduce a special set of algebraic curves that we study in this paper: \textit{trigonal curves}. We then define the $j$-invariant and the dessins d'enfants of trigonal curves.

\subsection{The Trigonal Curves}
Let $\Sigma=P^1\times P^1$ and let $p:\Sigma\rightarrow P^1$ be a projection of $\Sigma$ to one of its components. Let $E$ be a section of $p$ and for each $b$ in $P^1$, let $F_b$ be the fiber over $b$.

\begin{definition} \em
A \emph{trigonal curve} is a curve $C\subset \Sigma$ not containing $E$ or a fiber of $\Sigma$ as a component such that the restriction $p: C \rightarrow P^1$ is a map of degree $3$, i.e. each fiber intersects with $C$ in at most $3$ points. In the affine part, $C$ is defined by $F(x,y)=0$ with $F(x,y)\in \mathbb{C}[x,y]$ where deg$_y F=3$. 
\end{definition}

The trigonal curves are also closely related to the plane curves with a special singularity:

\begin{remark}\em
Let $C$ be a singular plane curve and has a singular point $p$ of multiplicity of $\text{degree }C-3$. If $C$ is blown up at $p$, the proper transform of $C$ results in a trigonal curve $\tilde{C}$ on $\Sigma=P^1\times P^1$ with an exceptional section $E_p$ corresponding to $p$. 
\end{remark}

In this paper, we narrow our studies for a special subset of trigonal curves. 

\begin{definition}\em{
A trigonal curve $C$ is \textit{completely reducible} if it is defined by $F(x,y)=(y-y_1(x))(y-y_2(x))(y-y_3(x))=0$  
where $y_i \in \mathbb{C}[x]$ for all $i\in \{1,2,3\}$.}
\end{definition}

It is important to note that $F$ can have arbitrarily large degree in $x$.

\subsection{The $j$-invariant}

As it is mentioned before, a dessin can arise from finite coverings and to associate dessins with trigonal curves, here we define an orientation preserving covering $j_C: B \rightarrow P^1$ for a trigonal curve $C$.

Recall that the \emph{cross-ratio} of a quadruple of pairwise distinct points $z_1,z_2,z_3,z_4 \in \bar{\mathbb{C}} := \mathbb{C} \cup {\infty}$ is defined as
$$
  (z_1,z_2;z_3,z_4):= \frac{(z_1-z_3)(z_2-z_4)}{(z_2-z_3)(z_1-z_4)}
$$
It is invariant under \emph{M\"{o}bius transformations}
$$
  z\mapsto \frac{az+b}{cz+d},\hspace{5 mm} a,b,c,d \in \mathbb{C},\hspace{5 mm} ad-bc\neq 0.
$$
After applying some M\"{o}bius transformations, we may get
$$
   (z_1,z_2;z_3,\infty)=\frac{z_1-z_3}{z_2-z_3},\hspace{5 mm} (\lambda,1;0,\infty)=\lambda.
$$
Normally, the cross ratio depends on the order of the points. When one change the order and bring them back to the form $\lambda',1,0,\infty$, it applies the following transformations
$$\lambda \mapsto 1-\lambda, \lambda \mapsto 1/{\lambda}.$$
When we iterate these transformations, we get the following orbits:
\begin{equation}\label{orbit}
  \lambda,\hspace{5 mm} 1-\lambda, \hspace{5 mm}   \frac{1}{\lambda}, \hspace{5 mm}   \frac{1}{1-\lambda}, \hspace{5 mm} 1-{\frac{1}{\lambda}}, \hspace{5 mm}   \frac{\lambda}{\lambda-1}. 
\end{equation}


Substituting these shows that the function
\begin{equation}\label{jinv}
  j:=\frac{{4(\lambda^2-\lambda+1)^3}}{27\lambda^2(\lambda-1)^2}
\end{equation}
is invariant under (\ref{orbit}). Moreover, since (\ref{jinv}) is an equation of degree 6 in $\lambda$ for any given value of $j$, it has six solutions and they are given by (\ref{orbit}). Therefore, $j$ determines the unordered quadruple $z_1,z_2,z_3,z_4\in P^1$ uniquely up to M\"{o}bius transformation, where $\lambda$ is the cross-ratio of the unordered four points. This expression is called the \textit{$j$-invariant} of the unordered four points. Note that the $j$-invariant of a triple $z_1,z_2,z_3 \in \mathbb{C}$ means the $j$-invariant of the quadruple $z_1,z_2,z_3,\infty$.

\subsubsection{Real Values of the $j$-invariant}

For our future construction, it is important to gain more information about the real values of the $j$-invariant. For real values, we only use the  M\"{o}bius transformations that preserves $\infty \in P^1$, i.e. $z \rightarrow az+b, a\neq 0$ and in the complex plane $\mathbb{C}$, one can talk about the angles and length ratios. 
\begin{figure}[ht!]
\centering
\includegraphics[width=70mm]{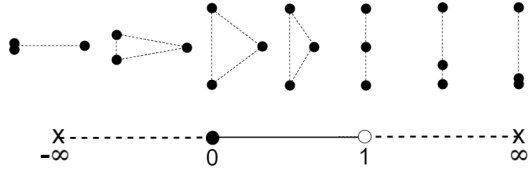}
\caption{Real values of the $j$-invariant}
\end{figure}

\begin{proposition}\em \cite{degtyarev2012topology}
 Assume that the $j$-invariant $j$ of a triple $z_1,z_2,z_3\in \mathbb{C}^1$ is real. Then one has one of the following two cases:

(1) When $j\geq 1$, the points $z_1,z_2,z_3$  are collinear. In this case, the length ratio of the two segments formed by the points (shorter to longer) decreases from $1/2$ to $0$ with $j$ increasing from $1$ to $\infty$.
  
(2) When $j\leq 1$, the points $z_1,z_2,z_3$ form an isosceles triangle. In this case, the angle $\Theta$ at the vertex decreases from $2\pi$ to $0$ with $j$ decreasing from $1$ to $-\infty$.

Conversely, the $j$-invariant of any triple satisfying (1) and (2) is real.
\end{proposition}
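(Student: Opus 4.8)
The plan is to reduce the entire statement to the study of the single complex parameter $\lambda = (z_1 - z_3)/(z_2 - z_3)$, the cross-ratio that controls $j$. Since the admissible Möbius transformations here are the affine maps $z \mapsto az + b$, which preserve $\lambda$ as well as all angles and length ratios, I may normalize the triple freely; in particular I will place $z_3$ at the origin, so that $|\lambda| = |z_1 - z_3|/|z_2 - z_3|$ is a ratio of side lengths and $\arg\lambda$ is the angle of the triple at $z_3$. Everything then follows from two inputs: the location of the real locus $\{j(\lambda) \in \mathbb{R}\}$ in the $\lambda$-plane, and the explicit behaviour of $j$ along each of its branches.

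For the real locus I will use that, by (\ref{jinv}) and (\ref{orbit}), $j$ is invariant under the anharmonic group and its fibres are exactly the six-element orbits. Since $j$ has real coefficients, $\overline{j(\lambda)} = j(\bar\lambda)$, so $j(\lambda) \in \mathbb{R}$ iff $\bar\lambda$ lies in the orbit (\ref{orbit}) of $\lambda$. Solving the six equations $\bar\lambda = g(\lambda)$ one by one gives: $\bar\lambda = \lambda$ is the real axis; $\bar\lambda = 1 - \lambda$ is the line $\mathrm{Re}\,\lambda = \tfrac12$; $\bar\lambda = 1/\lambda$ is the circle $|\lambda| = 1$; $\bar\lambda = \lambda/(\lambda - 1)$ is the circle $|\lambda - 1| = 1$; and the two remaining equations force $\lambda^2 - \lambda + 1 = 0$ with $\lambda$ real, hence have no solution. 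Thus the real locus is the union of the real axis and these three curves. Interpreting each via $\lambda$: the real axis is exactly the collinear triples, while the three remaining loci are precisely the three ways the triple can be isosceles ($|z_1 - z_3| = |z_2 - z_3|$, $|z_1 - z_3| = |z_1 - z_2|$, $|z_2 - z_3| = |z_1 - z_2|$ respectively), consistent with the $S_3$-symmetry permuting the vertices. This already yields the collinear/isosceles dichotomy and, together with the computations below, the converse.

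On the collinear branch ($\lambda \in \mathbb{R}$) I will establish $j \geq 1$ from the identity
\[
  j - 1 = \frac{\big[(\lambda + 1)(\lambda - 2)(2\lambda - 1)\big]^2}{27\,[\lambda(\lambda - 1)]^2},
\]
which is manifestly nonnegative for real $\lambda$ and vanishes exactly on the orbit $\{-1, 2, \tfrac12\}$ where $j = 1$. Using the $S_3$-symmetry I reduce to the normalized triple $\{0, t, 1\}$, where $t \in (0, \tfrac12]$ is precisely the (shortest-to-longest) length ratio and $\lambda = -t/(1 - t) \in [-1, 0)$. A Riemann--Hurwitz count shows the only real critical points of $j$ are $0, 1, \infty, -1, 2, \tfrac12$, none interior to $(-1, 0)$, so $j$ is strictly monotone there; since $j(-1) = 1$ and $j \to +\infty$ as $t \to 0^+$, the ratio $t$ decreases from $\tfrac12$ to $0$ as $j$ increases from $1$ to $\infty$, as claimed.

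On an isosceles branch, say $|\lambda| = 1$, I write $\lambda = e^{i\Theta}$ with $\Theta$ the angle at the apex $z_3$ and compute, using $\lambda^2 - \lambda + 1 = e^{i\Theta}(2\cos\Theta - 1)$ and $\lambda(\lambda-1) = 2i\sin(\Theta/2)\,e^{3i\Theta/2}$,
\[
  j = \frac{2\,(2\cos\Theta - 1)^3}{27\,(\cos\Theta - 1)},
\]
which is real (re-proving the converse) and, with $c = \cos\Theta$, satisfies
\[
  \frac{dj}{dc} = \frac{2}{27}\,\frac{(2c - 1)^2(4c - 5)}{(c - 1)^2} \leq 0 ,
\]
so $j$ is monotone in $\Theta$, with $j \to -\infty$ as the triangle collapses to a spike, $j = 0$ at the equilateral value $\Theta = \pi/3$, and $j = 1$ at the flat limit; the branches $\mathrm{Re}\,\lambda = \tfrac12$ and $|\lambda - 1| = 1$ give the same picture by symmetry. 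The main obstacle I anticipate is not the sign of $j$ but the precise calibration of the angle: the interior apex angle of a genuine triangle lies in $(0, \pi)$, so matching the stated range $(0, 2\pi)$ requires being careful about whether $\Theta$ denotes the oriented (or doubled) angle, and checking that the monotonicity and the endpoint values $1$ and $-\infty$ are assigned to the correct ends of that range.
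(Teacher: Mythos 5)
Your proposal addresses a statement the paper itself never proves: the proposition is quoted from Degtyarev \cite{degtyarev2012topology} and used as a black box, so there is no internal proof to compare against, and your argument supplies one. Judged on its own it is correct and self-contained: you characterize the real locus $\{j(\lambda)\in\mathbb{R}\}$ by solving $\bar\lambda=g(\lambda)$ for the six transformations in (\ref{orbit}) (legitimately using that the fibres of $j$ are exactly these orbits, a fact the paper records right after (\ref{jinv})), you identify the real axis with collinear triples and the three lines/circles with the isosceles triples, and you then obtain the quantitative claims from the identity $j-1=[(\lambda+1)(\lambda-2)(2\lambda-1)]^2/[27\lambda^2(\lambda-1)^2]$ on the real axis (the same factorization the paper uses in Proposition \ref{proposition::valuesofj}) and from $j=2(2\cos\Theta-1)^3/[27(\cos\Theta-1)]$ on $|\lambda|=1$; your Riemann--Hurwitz count is exact (the ramification over $0,1,\infty$ already exhausts the total of $10$), so monotonicity on $(-1,0)$ does follow, and the derivative computation in $c=\cos\Theta$ is right. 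Two blemishes, neither fatal. First, your ``respectively'' pairing of $\mathrm{Re}\,\lambda=\tfrac12$, $|\lambda|=1$, $|\lambda-1|=1$ with the three isosceles conditions transposes the first two (e.g.\ $\mathrm{Re}\,\lambda=\tfrac12$ means $|\lambda|=|1-\lambda|$, i.e.\ $|z_1-z_3|=|z_1-z_2|$, apex $z_1$); your later computation on $|\lambda|=1$ uses the correct apex $z_3$, so nothing downstream breaks. Second, the calibration issue you flag is a defect of the statement's convention rather than of your proof: the apex angle of a nondegenerate isosceles triangle lies in $(0,\pi)$, so the stated range $(0,2\pi)$ can only denote a doubled (or otherwise renormalized) angle, and under $\Theta=2\theta$ your monotone correspondence $\theta\in(0,\pi]\leftrightarrow j\in(-\infty,1]$ is precisely the assertion; note also that your reading of the length ratio (shortest pairwise distance to longest) is the one forced by the stated endpoint $1/2$, since the ratio of the two sub-segments would equal $1$, not $1/2$, at $j=1$.
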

The following is a useful fact for the later computations.
\begin{proposition}\em
The $j$-invariant is equal to $0$ when $\lambda= e^{\pm\frac{\pi i}{3}}$, equal to $1$ when $\lambda \in \{-1,0.5,2\}$ and equal to $\infty$ when $\lambda \in \{0,1,\infty\}$.
\label{proposition::valuesofj}
\end{proposition}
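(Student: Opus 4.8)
The plan is to treat each of the three claims by direct substitution into the defining formula (\ref{jinv}), since $j$ is given explicitly as a rational function of $\lambda$. I would handle the three values $0$, $\infty$, $1$ separately, because each corresponds to a different feature of this rational function: a zero of the numerator, a pole, and a prescribed level set, respectively.

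First, for $j=\infty$: the only finite poles of (\ref{jinv}) occur where the denominator $27\lambda^2(\lambda-1)^2$ vanishes, namely at $\lambda=0$ and $\lambda=1$. At each of these the numerator $4(\lambda^2-\lambda+1)^3$ evaluates to $4\neq 0$, so $j=\infty$ genuinely holds. Since the numerator has degree $6$ in $\lambda$ while the denominator has degree $4$, we also have $j\to\infty$ as $\lambda\to\infty$, which accounts for the third point $\lambda=\infty$. Next, for $j=0$: a finite value of $j$ vanishes precisely when the numerator does, i.e. when $\lambda^2-\lambda+1=0$. The quadratic formula gives $\lambda=\tfrac12\bigl(1\pm i\sqrt{3}\bigr)=e^{\pm\pi i/3}$, and since neither value equals $0$ or $1$ the denominator is nonzero there; hence $j=0$ holds exactly at $\lambda=e^{\pm\pi i/3}$.

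The main work is the case $j=1$. Here I would write
\[
j-1=\frac{4(\lambda^2-\lambda+1)^3-27\lambda^2(\lambda-1)^2}{27\lambda^2(\lambda-1)^2}
\]
and establish the key algebraic identity that the numerator is a perfect square,
\[
4(\lambda^2-\lambda+1)^3-27\lambda^2(\lambda-1)^2=\bigl((\lambda+1)(2\lambda-1)(\lambda-2)\bigr)^2.
\]
Granting this, $j=1$ holds exactly when $(\lambda+1)(2\lambda-1)(\lambda-2)=0$, i.e. $\lambda\in\{-1,\tfrac12,2\}$, and these values again avoid the poles $0,1$.

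The expected obstacle is verifying this factorization: the brute-force route is a routine but lengthy expansion of a degree-$6$ polynomial. I would instead shorten it using the six-fold symmetry (\ref{orbit}). One checks directly that $j(2)=108/108=1$, and the triple $\{-1,\tfrac12,2\}$ is a single orbit of size $3$ under the $S_3$-action on $\lambda$, so $j$ takes the common value $1$ on all three. Moreover each element is fixed by one of the involutions in the group (for instance $\lambda\mapsto 1/\lambda$ fixes $-1$, $\lambda\mapsto 1-\lambda$ fixes $\tfrac12$, and $\lambda\mapsto \lambda/(\lambda-1)$ fixes $2$), so each is a ramification point of the degree-$6$ map $j$ and hence a double root of $j-1$. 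Three double roots exhaust the degree, so $\{-1,\tfrac12,2\}$ is the complete solution set, confirming the factorization without expansion.
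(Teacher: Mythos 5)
Your proposal is correct, and for two of the three cases it genuinely departs from the paper's own argument. For $j=1$ the paper simply asserts the perfect-square factorization $4(\lambda^2-\lambda+1)^3-27\lambda^2(\lambda-1)^2=[(\lambda-2)(\lambda+1)(2\lambda-1)]^2$ (implicitly, by expansion) and reads off the roots; you instead verify the single value $j(2)=1$, propagate it to $\{-1,\tfrac12,2\}$ by the invariance of (\ref{jinv}) under the orbit (\ref{orbit}), and then use the fact that each of these points is fixed by a nontrivial involution of the $S_3$-action (so $j'$ vanishes there, since the derivative of a Möbius involution at a fixed point is $-1$) to get three double roots, which exhausts the degree-$6$ numerator. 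This is conceptually cleaner: it explains \emph{why} the roots are double (they are ramification points of the quotient map) rather than discovering it by computation, at the cost of invoking the invariance and degree statements that the paper records just after (\ref{jinv}). For $j=\infty$ the paper argues geometrically from the definition of the $j$-invariant of a quadruple: two of the points $z_1,z_2,z_3,\infty$ coincide exactly when $\lambda\in\{0,1,\infty\}$; you instead do a pole analysis of the rational function (denominator zeros at $0,1$ with nonvanishing numerator, plus the degree count $6>4$ forcing a pole at $\lambda=\infty$). Both are complete; the paper's version keeps the geometric meaning of $j=\infty$ (coinciding branches, which is what later makes $\times$-vertices mark singular fibers) in view, while yours is self-contained at the level of the function $\lambda\mapsto j(\lambda)$ and, unlike the paper's, explicitly rules out any other poles. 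The $j=0$ case is identical in both.
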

\begin{proof}
$j=0$ implies $\frac{{4(\lambda^2-\lambda+1)^3}}{27\lambda^2(\lambda-1)^2}=0$. Since, $j=0$, $\lambda\neq 0,1$ so we have $4(\lambda^2-\lambda+1)^3=0$ which implies $\lambda = e^{\pm \frac{\pi i}{3}}$.

Secondly, $j=1$ implies $4(\lambda^2-\lambda+1)^3=27\lambda^2(\lambda-1)^2$, and $4(\lambda^2-\lambda+1)^3-27\lambda^2(\lambda-1)^2=0$ so we have $[(\lambda-2)(\lambda+1)(2\lambda-1)]^2=0$ which implies $\lambda=2,1/2,-1$.

Lastly, $j=\infty$ if two of the three points coincide. Hence for the triple $z_1,z_2,z_3$, either $z_1=z_2$ or $z_1=z_3$ or $z_2=z_3$. They result $\lambda=1$, $\lambda=0$ and $\lambda=\infty$ respectively.    
\end{proof}

\subsubsection{The $j$-invariant of a trigonal curve}

Let $C\subset \Sigma \rightarrow P^1$ be a trigonal curve. The \emph{$j$-invariant of a trigonal curve} is the rational map $j_C: P^1 \rightarrow P^{1}$ that sends each point $b$ in the base $P^1$ to the $j$-invariant of $(C\cup E) \cap F_b$ considered as a subset of 4 points in $F_b$.

\begin{remark}\em
For the completely reducible trigonal curves (CRTC), it is often useful to factor the $j$-invariant $j_C$ into two maps as $j_C=\phi_2 \circ \phi_1$ where $\phi_1:P^1 \rightarrow {P}^1$ takes a point $b \in P^1$ and sends it to the cross ratio $(y_1(b),y_2(b);y_3(b),\infty)$ and $\phi_2:{P}^1 \rightarrow {P}^1 $ takes the cross ratio $\lambda$ and sends it to the $\frac{4(\lambda^2-\lambda+1)^3}{27\lambda^2(\lambda-1)^2}$.
\label{remark::rem1}
\end{remark}

We deduce from Remark \ref{remark::rem1} that $\phi_2$ induces a branched covering of ${P}^1_{\mathbb{R}}$ whose branched locus is $\{0,1,\infty\}$ and the corresponding ramification indexes are $3,2,2$ respectively (See Figure \ref{crossratio}). In Figure \ref{crossratio}, pull-backs of $0,1,\infty$ are shown with $\bullet,\circ,\times$ respectively. There is also one $\times$ vertex at the point at infinity.

\begin{figure}[ht!]
\centering
\includegraphics[width=90mm]{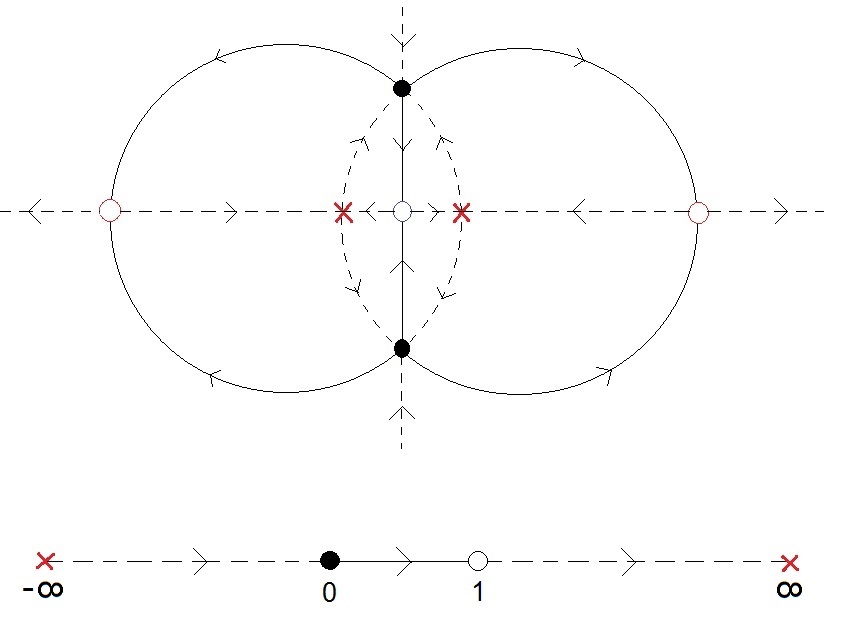}
\caption{The cross ratio graph}
\label{crossratio}
\end{figure}

\subsection{Dessins d'Enfants of Trigonal Curves}
Here, we define what a dessin d'enfant of a trigonal curve is.

\begin{definition}\em
Let $C \subset \Sigma \rightarrow P^1$ be a trigonal curve and $j_C=P^1 \rightarrow P^1$ be its $j$ invariant. The \textit{dessin d'enfant} of $C$, $\Gamma_C$, is the dessin d'enfant of the covering $j_C$, i.e. $\Gamma_C=j_C^{-1}([0,1])$.
\end{definition}

We will also mark $\times$ on the points of $j^{-1}(\infty)$ and call \textit{monochrome} to the ramification points of $j$ whose images are in $(0,1)$. 

Note that if a trigonal curve $C$ has a triple intersection, $j$ behaves constant in an $\epsilon$ neighborhood of that point for some $\epsilon > 0$. Hence, $\Gamma_C$ is not defined. Therefore, in the rest of the paper, we will consider only the trigonal curves with no triple intersections.

\section{Combinatorial Types of Dessins d'Enfants}\label{ch:comb}

In this section, we work on the combinatorial types of dessins d'enfants. We first list some combinatorial properties of a dessin $\Gamma_C$ of a CRTC with the maximal degree $n$ and also introduce an algorithm to compute $\Gamma_C$. We then define the combinatorial type of a dessin and state a theorem which gives an upper bound for the number of different combinatorial types of dessins for a fixed maximal degree. We finally give the exact combinatorial types up to the maximal degree four. 

We assume from now on $C$ is a CRTC unless otherwise stated. 

\subsection{Dessins d'Enfants of CRTCs} \label{dessin-crtc}

The main object in this section is the dessins d'enfants of the completely reducible trigonal curves. Our results for these dessins mostly depend on the \textit{maximal degrees} of CRTCs. Hence we first define the maximal degree before stating the results.

\begin{definition}\em
Let $C$ be in the form $(y-y_1)(y-y_2)(y-y_3)=0$ where $y_1,y_2,y_3\in \mathbb{C}[x]$. Let $d_i$ be degree of $y_i$ for $i\in \{1,2,3\}$ and define 
$$
\mathfrak{d}(C):= \textrm{maximum}\{d_1,d_2,d_3\}.
$$
$\mathfrak{d}(C)$ is called the \textit{maximal degree} of $C$.
\end{definition}

\subsubsection{Combinatorial Properties of the Dessins d'Enfants of CRTCs}

In this section, we give some combinatorial properties of the dessins d'enfants of CRTCs. The first property is about the number of vertices and edges of $\Gamma_C$.

\begin{proposition}
{\em
$\Gamma_C$ has at most $2n$ $\bullet$- vertices, at most $3n$ $\circ$- vertices and exactly $6n$ edges.}
\label{Prop6n}
\end{proposition}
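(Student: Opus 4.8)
The plan is to count the preimages $j_C^{-1}(0)$, $j_C^{-1}(1)$, and $j_C^{-1}([0,1])$ directly using the factorization $j_C = \phi_2 \circ \phi_1$ from Remark \ref{remark::rem1}, combined with the local ramification data recorded just after that remark: over a generic point the map $\phi_2$ has degree $6$, with $\bullet$-vertices (over $0$) of ramification index $3$, $\circ$-vertices (over $1$) of ramification index $2$, and $\times$-vertices (over $\infty$) of ramification index $2$. The key quantitative input is the degree of $\phi_1$. First I would determine this degree. Writing $\lambda = \phi_1(x) = (y_1(x),y_2(x);y_3(x),\infty) = \frac{y_1(x)-y_3(x)}{y_2(x)-y_3(x)}$, this is a rational function in $x$ whose numerator and denominator are polynomials of degree at most $\mathfrak{d}(C) = n$, so $\deg \phi_1 \le n$. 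The $\bullet$-vertices of $\Gamma_C$ are the points where $j_C = 0$, i.e. where $\phi_1(x) = e^{\pm \pi i/3}$ (by Proposition \ref{proposition::valuesofj}); each of the two target values is attained at most $n$ times, but the ramification index $3$ of $\phi_2$ over $0$ means these come in groups, so I would count $\bullet$-vertices as preimages under $\phi_1$ of the two points $e^{\pm \pi i/3}$, giving at most $2n$ of them.

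For the $\circ$-vertices, $j_C = 1$ forces $\lambda = \phi_1(x) \in \{-1, 1/2, 2\}$, again by Proposition \ref{proposition::valuesofj}. Each of these three values is a preimage of $1$ under $\phi_2$, and since $\phi_1$ has degree at most $n$, each value is attained at most $n$ times; summing over the three values gives at most $3n$ $\circ$-vertices. The subtlety I must be careful about is the ramification index $2$ of $\phi_2$ over $1$: the three values $-1, 1/2, 2$ form one orbit of size $3$ rather than $6$ precisely because $\phi_2$ ramifies there, and I should confirm that the count $3n$ reflects this correctly rather than double counting. The edges are the cleanest: an edge of $\Gamma_C = j_C^{-1}([0,1])$ is a preimage of the open segment $(0,1)$, over which $j_C$ is an honest covering of degree $\deg j_C = \deg \phi_2 \cdot \deg \phi_1 = 6 \cdot \deg\phi_1$. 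Since each arc $(0,1)$ lifts to $\deg j_C$ arcs and $\deg \phi_1 = n$ generically, this yields exactly $6n$ edges; I would state the edge count as $6 \deg \phi_1 = 6n$, noting that this is an equality (not a bound) because $(0,1)$ contains no branch points of $j_C$.

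The main obstacle I anticipate is pinning down $\deg \phi_1$ and accounting for the ``at most'' in the vertex bounds versus the exact count for edges. The degree of $\phi_1$ equals $\max(\deg(y_1 - y_3), \deg(y_2 - y_3))$ after cancellation, which can drop below $n$ if leading coefficients coincide or if $\phi_1$ fails to be surjective onto the relevant target values; this is exactly why the vertex counts are upper bounds (``at most $2n$'' and ``at most $3n$'') while the edge count, being governed by the total degree over a generic interior point where no such degeneration occurs, is exact. I would therefore organize the argument as: (i) establish $\deg \phi_1 \le n$ with equality generically; (ii) derive the $\bullet$- and $\circ$-vertex bounds via Proposition \ref{proposition::valuesofj} by counting $\phi_1$-preimages of the critical values; (iii) compute the edge count exactly from the covering degree of $j_C$ over $(0,1)$. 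The delicate bookkeeping lies in correctly interpreting the local ramification of $\phi_2$ so that preimages are counted with the right multiplicity, ensuring the factor $2$ for $\bullet$-vertices (two values $e^{\pm\pi i/3}$) and factor $3$ for $\circ$-vertices (three values $-1,1/2,2$) emerge consistently with the factor $6$ appearing in the edge count.
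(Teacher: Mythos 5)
Your overall route is the same as the paper's: factor $j_C=\phi_2\circ\phi_1$ as in Remark \ref{remark::rem1}, use Proposition \ref{proposition::valuesofj} to identify the $\bullet$-vertices with the $\phi_1$-preimages of $e^{\pm \pi i/3}$ and the $\circ$-vertices with the $\phi_1$-preimages of $\{-1,1/2,2\}$, and count. The vertex bounds are fine. The genuine gap is in the edge count: your equality rests on the claim that $(0,1)$ contains no branch points of $j_C$, and that claim is false in general. Since $\phi_2$ is ramified only over $\{0,1,\infty\}$, a branch point of $j_C$ over $(0,1)$ is exactly the image of a critical point of $\phi_1$ lying on an open edge of the cross-ratio graph (Figure \ref{crossratio}) --- these critical points are precisely the monochrome vertices of $\Gamma_C$, which the paper explicitly allows and which are central to its later sections (monochrome modifications, the maximality condition). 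At such a point $j_C^{-1}((0,1))$ is locally not a disjoint union of arcs (at a simple monochrome vertex four arc-germs meet), so ``covering of degree $6\deg\phi_1$ over $(0,1)$'' breaks down. The paper's proof handles this by a counting convention --- an edge passing through a monochrome vertex is counted as one edge, not two --- under which each of the six edges of the cross-ratio graph still lifts to exactly $\deg\phi_1$ edges of $\Gamma_C$. Your argument needs this convention and the accompanying local analysis (or a reduction to the case where $\phi_1$ is unramified over the open edges); as written, the step fails exactly on the dessins with monochrome vertices.

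Second, your reconciliation of ``at most'' for vertices versus ``exactly'' for edges is internally inconsistent. You attribute the ``at most'' to the possibility that $\deg\phi_1<n$ (cancellation of leading terms). But a drop in $\deg\phi_1$ lowers the generic fiber of $j_C$ to $6\deg\phi_1<6n$ points, so it would lower the edge count below $6n$ as well --- it cannot explain why one count is a bound and the other is exact. (Also, a non-constant rational map $P^1\to P^1$ is surjective, so ``$\phi_1$ fails to be surjective onto the relevant target values'' never occurs.) The correct dichotomy, which is the one the paper's two cases implement with $\deg\phi_1=n$ taken as given, is about \emph{where} the ramification of $\phi_1$ sits: ramification over the five vertices of the cross-ratio graph merges vertices of $\Gamma_C$ (hence ``at most $2n$'' and ``at most $3n$''), while ramification over the open edges creates monochrome vertices, which by the convention above leave the edge count unchanged (hence ``exactly $6n$''). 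Your instinct that $\deg\phi_1$ can genuinely drop is pointing at a real subtlety --- when all three components have degree $n$ with equal leading coefficients the edge count does fall below $6n$, a case the paper sidesteps by simply asserting that $\phi_1$ is $n$-fold --- but it is a caveat to the proposition itself, not a mechanism that distinguishes the vertex bounds from the edge equality.
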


\begin{proof}
From Remark \ref{remark::rem1}, we have $j_C=\phi_2 \circ \phi_1$. When we look at the $\phi_2^{-1}[0,1]$, we get the cross-ratio graph as in Figure \ref{crossratio}. From Proposition \ref{proposition::valuesofj}, the cross-ratio graph has 2 $\bullet$- vertices and 3 $\circ$- vertices. The dessin d'enfant is the pull-back of the cross-ratio graph under $\phi_1^{-1}$. There are two cases for $\phi_1$:

\textbf{Case 1}: Assume that $\phi_1$ is an $n$-fold unbranched covering. Then the equations in Proposition \ref{proposition::valuesofj} have no repeated roots. This implies that $\Gamma_C$ has exactly $2n$ $\bullet$- vertices and $3n$ $\circ$- vertices. Since $\phi_1$ is unbranched and the cross-ratio graph has 6 edges, $\Gamma_C$ has $6n$ edges.

\textbf{Case 2}: Assume $\phi_1$ is a branched covering. This can happen when the equations in Proposition \ref{proposition::valuesofj} have repeated roots, then the $\Gamma_C$ has $< 2n$ $\bullet$- vertices and $ < 3n$ $\circ$- vertices. Moreover, if $\Gamma_C$ has a monochrome vertex, we count the corresponding edge as one edge, not two edges. Hence, we can assume that $\phi_1$ is branched only on the vertices of the cross-ratio graph and this implies that $\Gamma_C$ has $6n$ edges in total. 

\end{proof}

Moreover, from Proposition \ref{proposition::valuesofj}, we can easily deduce that there are three different types of $\circ$- vertices; one type is coming from $\lambda=-1$, one is coming from $\lambda=0.5$, and one is coming from $\lambda=2$. We use color codes for each type; say $red$, $blue$ and $green$ $\circ$- vertices respectively. Similarly, there are two different types of $\bullet$- vertices; one is coming from $j=e^{\frac{i\pi}{3}}$ and other is coming from $j=e^{-\frac{i\pi}{3}}$. Color them as $cyan$ and $yellow$ respectively. See Figure \ref{fig::3} as an example.

\begin{figure}[ht!]
\centering
\includegraphics[width=50mm]{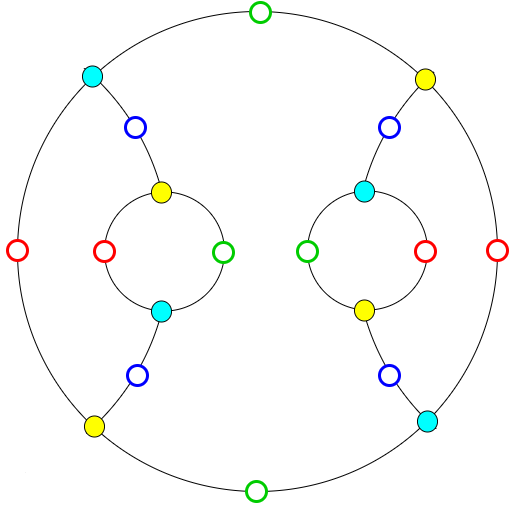}
\caption{Colored dessin d'enfant of the curve $(y-x^4)(y-x^2+1)(y-8x^2+16)=0$ }
\label{fig::3}
\end{figure}

Since each vertex in a dessin d'enfant is colored, one can also color a region using the colors of its $\circ$- vertices. 
 
\begin{proposition}\em
Each region in $\Gamma_C$ corresponds to two colors associated with the colors of the $\circ$- vertices on its boundary edges: Red-Blue (RB), Blue-Green (BG), Red-Green (RG). Furthermore, there are $2n$ edges for each region type.
\label{RBG}
\end{proposition}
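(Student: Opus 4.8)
The plan is to push every computation down to the explicit cross-ratio graph and then transport it back up through the factorization $j_C=\phi_2\circ\phi_1$ of Remark \ref{remark::rem1}. Write $\Gamma_0:=\phi_2^{-1}([0,1])$ for the cross-ratio graph of Figure \ref{crossratio}. By Proposition \ref{proposition::valuesofj} together with the ramification data of $\phi_2$ (index $3$ over $0$, index $2$ over $1$), $\Gamma_0$ has the two $\bullet$-vertices $\lambda=e^{\pm i\pi/3}$, each of degree $3$, the three $\circ$-vertices $\lambda=-1,\tfrac12,2$, each of degree $2$, and these are joined so that every $\bullet$ meets every $\circ$; thus $\Gamma_0\cong K_{2,3}$, and I will color each of its six edges by the color of its $\circ$-endpoint, giving two red, two blue and two green edges. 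Euler's formula ($5-6+F=2$) shows that $\Gamma_0$ divides the $\lambda$-sphere into exactly three faces, each bounded by a $4$-cycle running through both $\bullet$-vertices and exactly two of the $\circ$-vertices, and Figure \ref{crossratio} exhibits one $\times$-point of $\phi_2^{-1}(\infty)=\{0,1,\infty\}$ inside each face. Recording the two $\circ$-colors on each face boundary yields precisely the three pairs Red--Blue, Blue--Green and Red--Green.

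Next I would pull this picture back along $\phi_1$, using $\Gamma_C=\phi_1^{-1}(\Gamma_0)$. On the open regions $\phi_1$ is a (branched) covering onto the open faces of $\Gamma_0$, so each region of $\Gamma_C$ maps onto one of the three faces of $\Gamma_0$ and its boundary maps onto the corresponding bounding $4$-cycle. Since the color of a $\circ$-vertex of $\Gamma_C$ at a point $p$ is determined by the value $\lambda=\phi_1(p)\in\{-1,\tfrac12,2\}$, the $\circ$-vertices on the boundary of a region are exactly the $\phi_1$-preimages of the two $\circ$-vertices bounding its image face, and both colors of the pair genuinely occur because the boundary maps onto the whole $4$-cycle. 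Hence every region of $\Gamma_C$ carries one of the pairs RB, BG, RG, which is the first assertion.

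For the count I would argue one color at a time, reusing the mechanism of Proposition \ref{Prop6n}. Each $\circ$-color sits at a single degree-$2$ vertex of $\Gamma_0$, hence on exactly two of its six edges. The map $\phi_1$ has degree $n$, being the degree of the rational function $b\mapsto\bigl(y_1(b)-y_3(b)\bigr)/\bigl(y_2(b)-y_3(b)\bigr)$ of Remark \ref{remark::rem1}; therefore each of these two open edges has $n$ preimages in $\Gamma_C$, and with the convention of Proposition \ref{Prop6n} that an edge folded at a monochrome vertex is counted once, this gives exactly $2n$ edges of that color. The three color classes thus partition the $6n$ edges of Proposition \ref{Prop6n} into three groups of size $2n$, one for each of the three region types, which is the second assertion.

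The step I expect to require the most care is the behavior of $\phi_1$ over the vertices of $\Gamma_0$, exactly the degenerate cases already separated out in Proposition \ref{Prop6n}. Ramification over a $\circ$- or $\bullet$-vertex merges preimage vertices (lowering the vertex counts), ramification over a $\times$-point of $\{0,1,\infty\}$ fuses several quadrilaterals into a larger $2k$-gon region, and a critical value of $\phi_1$ in an open arc $(0,1)$ creates a monochrome vertex at which an edge folds back. In each situation I must verify that the number of preimages of each open edge is still $n$ and that the monochrome-folding convention keeps the color count at $2n$; granting this, the figure $2n$ per color is independent of how $\phi_1$ ramifies, and both assertions follow uniformly.
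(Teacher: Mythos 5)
Your first assertion and its proof are correct and essentially reproduce the paper's own argument: the interior of a region is connected, so $\phi_1$ maps it into a single face of the cross-ratio graph, and its boundary therefore carries exactly the two $\circ$-colors of that face. The extra detail you supply (the identification of the cross-ratio graph with $K_{2,3}$ and the Euler-characteristic count of its three quadrilateral faces) is consistent with Figure \ref{crossratio} and is harmless.

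The edge count, however, has a genuine gap: the closing identification ``three color classes of size $2n$, one for each of the three region types'' is false. An edge through a red $\circ$-vertex borders one RB region and one RG region, so the red edges are not the boundary edges of any single region type; conversely, the boundary-edge sets of the three region types pairwise overlap (RB regions and RG regions share every edge through a red vertex), so these three sets do not partition the $6n$ edges of Proposition \ref{Prop6n} at all. What Proposition \ref{RBG} asserts --- and what the proof of the combinatorial type theorem in Section \ref{combtype} actually consumes --- is that the boundary edges of the regions of one fixed type total $2n$, where edges are counted with the bivalent $\circ$-vertices omitted (this is the convention under which each face of the cross-ratio graph is a ``bigon'' and the $\mathfrak{d}=1$ dessin has type $[2,2,2]$). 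In those units each color class contains only $n$ edges, so your count of $2n$ per color (which counts $\bullet$--$\circ$ segments) is a true statement about a different quantity; the numerical match with the proposition's $2n$ is a coincidence of units. The paper's proof gets the intended statement in one step: the regions of type RB are exactly the $\phi_1$-preimages of the single RB face, so their total boundary is the preimage of that face's boundary bigon, namely $2n$ edges under the $n$-fold covering. Your argument is repaired the same way: the boundary of the RB-type regions is the union of the preimages of the red edge and the blue edge of the cross-ratio graph, giving $n+n=2n$ full edges, rather than a single color class.
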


\begin{proof}
Let $R$ be a region in $\Gamma_C$. We have $j_C=\phi_2 \circ \phi_1$ from Remark \ref{remark::rem1}. Since the interior of $R$, $R^{\circ}$, is connected, its image under $\phi_1$, $\phi_1(R^{\circ})$, should also be connected. Hence, $\phi_1(R^{\circ})$ is either an $RB$, $BG$ or $RG$ region in the cross-ratio graph. WLOG, let the image be an $RB$ region. Since the region has only red and blue $\circ$- vertices on its boundary, its preimage should also have these colors of $\circ$- vertices on its boundary. Hence, $R$ is an $RB$ region. Therefore, there are exactly two different colors of $\circ$- vertices in a region of $\Gamma_C$.

Furthermore, since $\phi_1$ is an $n$-fold covering and each region in the cross-ratio graph is bigonal, then there are $2n$ edges for each region type.
\begin{figure}[ht!]
\centering
\includegraphics[width=40mm]{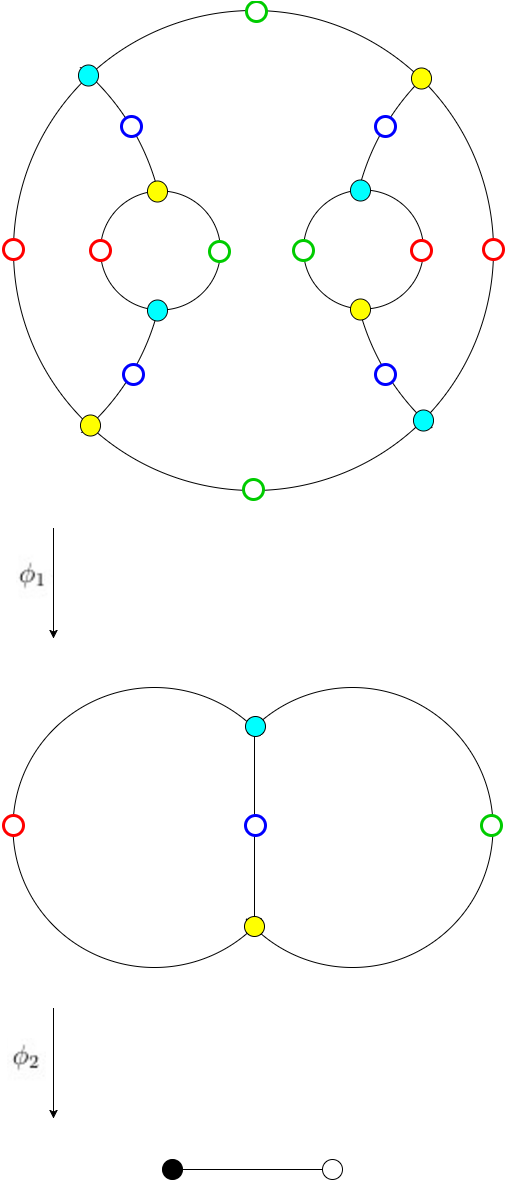}
\caption{Covering maps $\phi_1$ and $\phi_2$ that construct the $j$-invariant}
\label{fig:jcons}
\end{figure}
\end{proof}

Finally, we present upper and lower bounds for the number of regions in $\Gamma_C$.

\begin{proposition}\em 
Let $R_{\Gamma_C}$ be the number of regions of $\Gamma_C$. Then, 
\begin{center}
$n+2 \leq R_{\Gamma_C}\leq 3n$.
\end{center}
\label{numberofregions}
\end{proposition}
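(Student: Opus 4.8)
The plan is to read off both bounds from the factorization $j_C=\phi_2\circ\phi_1$ of Remark~\ref{remark::rem1}. First I would record the structure of the base graph $\Gamma_0:=\phi_2^{-1}([0,1])$: by Proposition~\ref{proposition::valuesofj} its vertices are the two $\bullet$'s ($\lambda=e^{\pm i\pi/3}$) and three $\circ$'s ($\lambda=-1,\tfrac12,2$), it has $6$ edges, and the three points $\phi_2^{-1}(\infty)=\{0,1,\infty\}$ (the $\times$'s) lie in its complement. Euler's formula gives $\Gamma_0$ exactly three faces, and one checks that each face contains exactly one $\times$. Since $\Gamma_C=\phi_1^{-1}(\Gamma_0)$, the regions of $\Gamma_C$ are precisely the connected components of $\phi_1^{-1}(P^1\setminus\Gamma_0)$, i.e.\ the components lying over the three base faces.

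For the upper bound I would argue that every region contains at least one singular fiber. If $U$ is a region then $\phi_1(U)$ is connected and disjoint from $\Gamma_0$, hence contained in a single base face $D$; as $U$ is a connected component of $\phi_1^{-1}(D^{\circ})$ and $\phi_1$ is proper, the restriction $\phi_1\colon U\to D^{\circ}$ is surjective, so $U$ contains a point of $\phi_1^{-1}(\times_D)$, i.e.\ a $\times$-vertex. Distinct regions are disjoint, so they carry disjoint sets of $\times$'s, and therefore
\[
R_{\Gamma_C}\le\bigl|\phi_1^{-1}(\{0,1,\infty\})\bigr|\le 3n,
\]
the last inequality because each of $0,1,\infty$ has at most $n$ preimages under the degree-$n$ map $\phi_1$.

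For the lower bound I would apply Euler's formula to $\Gamma_C$ viewed as an honest CW complex: I promote each monochrome vertex (a ramification point of $\phi_1$ of index $e\ge 2$ over the arc $\phi_2^{-1}((0,1))$, hence of graph-degree $2e$) to a genuine vertex and split the edges through it accordingly. Writing $k\ge 1$ for the number of connected components, Euler's formula reads $R_{\Gamma_C}=E-V+1+k$. A handshake count, combined with Proposition~\ref{Prop6n} and the fact that $\phi_1$ has total local degree $2n$ over the $\bullet$-locus and $3n$ over the $\circ$-locus, yields $E-V=n+B+W+M$, where $B,W,M\ge 0$ are the total ramifications of $\phi_1$ over the $\bullet$-, $\circ$- and monochrome-loci respectively; crucially the number of monochrome vertices cancels between $E$ and $V$. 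Hence $R_{\Gamma_C}=n+1+k+B+W+M\ge n+2$.

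I expect the handshake computation to be the main obstacle: one must verify the graph-degrees $3e$, $2e$, $2e$ at black, white and monochrome vertices (each forced by the ramification indices $3,2$ of $\phi_2$ over $0,1$ and $1$ over an interior point of $(0,1)$, combined with the local degree $e$ of $\phi_1$) and check that the monochrome contributions cancel exactly, so that $E-V$ depends only on $n$ and the non-$\times$ ramification. A secondary subtlety is bookkeeping the number of connected components $k$, which must be tracked so that the disconnected case still yields $R_{\Gamma_C}\ge n+2$ rather than a weaker bound; the upper bound, by contrast, is insensitive to connectedness.
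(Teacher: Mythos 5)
Your proof is correct, but it takes a partly different route from the paper's. For the upper bound the paper argues through edges: by Proposition \ref{RBG} each of the three region types ($RB$, $BG$, $RG$) carries exactly $2n$ edges, and every region is at least a bigon, so there are at most $n$ regions of each type and hence $R_{\Gamma_C}\le 3n$. You instead argue through singular fibers: each region is a component of the $\phi_1$-preimage of a face of the cross-ratio graph, surjects onto that face by properness plus openness, and therefore contains one of the at most $3n$ points of $\phi_1^{-1}(\{0,1,\infty\})$. Both are sound; the paper's is more elementary (no surjectivity argument needed), while yours proves strictly more --- it is the ``at least one $\times$-vertex per region'' half of the unnumbered proposition in Section \ref{CH:5}, which the paper establishes only later by essentially your lifting argument. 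For the lower bound you and the paper run the same Euler-characteristic computation, but your version is the more careful one. The paper keeps $E_{\Gamma_C}=6n$ by the convention that edges through a monochrome vertex are counted ``as one edge, not two,'' and then applies Euler's formula to what is then not an honest CW structure; with that convention the asserted equality $R_{\Gamma_C}=M_{\Gamma_C}+E_{\Gamma_C}-V_{\Gamma_C}+1$ can fail when monochrome ramification is present (it errs in the direction that still yields the bound, so the paper's conclusion survives). Your device of promoting each monochrome vertex to a genuine vertex, checking the local degrees $3e$, $2e$, $2e$ at black, white and monochrome vertices, and observing that the number of monochrome vertices cancels in $E-V=n+B+W+M\ge n$ repairs exactly this point, and your bookkeeping of the component count $k\ge 1$ gives $R_{\Gamma_C}=n+1+k+B+W+M\ge n+2$, which is the paper's intended estimate made rigorous.
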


\begin{proof}
From Proposition \ref{RBG}, $\Gamma_C$ has totally $6n$ edges where each region type has $2n$ edges. For a fixed region type, the maximum number of regions happens is obtained when there are $n$ bigonal regions, hence $R_{\Gamma_C}\leq 3n$ since there are three region types at all. 

Moreover, let $V_{\Gamma_C},E_{\Gamma_C}$ and $M_{\Gamma_C}$ be the number of vertices, edges and connected components of $\Gamma_C$ respectively. To get a complete triangulation, we need to connect each connected components with adding extra edges between them without changing $R_{\Gamma_C}$. Hence, total number of edges we have is $E_{\Gamma_C}+M_{\Gamma_C}-1$ and the topological Euler characteristic of $\Gamma_C$ is 
$$\chi(\Gamma_C)=V_{\Gamma_C}-(E_{\Gamma_C}+M_{\Gamma_C}-1)+R_{\Gamma_C}=2$$
and this implies 
$$R_{\Gamma_C}=M_{\Gamma_C}+E_{\Gamma_C}-V_{\Gamma_C}+1.$$
From Proposition \ref{Prop6n}, $V_{\Gamma_C}\leq 5n$ and $E_{\Gamma_C}=6n$. Thus we have 
$$R_{\Gamma_C} \geq M_{\Gamma_C}+n+1.$$ 
Moreover, we also have $M_{\Gamma_C}\geq 1$, so we get 
$$
R_{\Gamma_C}\geq n+2
$$
\end{proof}

\subsubsection{Computing Dessins d'Enfants}
We use computer experimentation to obtain insights about the dessins d'enfants and how they behave under the deformation of curves. We improve an algorithm which takes the components of a CRTC as input and computes its dessin d'enfant as the output. Here is the algorithm:

For a given CRTC in the form $(y-y_1)(y-y_2)(y-y_3)=0$ where $y_1,y_2,y_3\in \mathbb{C}[x]$, the very first step is to find the position of the $\times$-vertices i.e. singular fibers of the complement. These fibers are passing through either singular points of one irreducible component or intersections of these components. In our case, any irreducible component has no singular point by itself since ${\partial (y-y_i)}/{\partial y} \neq 0$. Hence we only need to find the intersection points of the three components $\{y-y_i\}, i \in \{1,2,3\}$. Hence, we set $y_i=y_j$ for $i\neq j$ and $i,j\in\{1,2,3\}$. The solution set of these three equalities gives the singular fibers i.e. position of $\times$-vertices.

Since a dessin is the preimage of the $j$-invariant of $C$, $j_C$, over $P_{[0,1]}^1$, it is enough to find points in $P^1$ where $j_C=r$ with $r\in [0,1]$. To do this, we first compute the cross-ratio in the rational function form $\lambda=\frac{y_1-y_3}{y_2-y_3}$. Then, we set $j_C=\frac{{4(\lambda^2-\lambda+1)^3}}{27\lambda^2(\lambda-1)^2}=r$, substitute $\lambda$ in the equation, solve it for $x$ and plot the solution points. For $r=0$ or $r=1$, we plot $\bullet$- or $\circ$- vertices respectively.

Finally, to compute the dessin, we divide the interval $[0,1]$ into equal length sub-intervals $0=r_0<r_1<...<r_n=1$. For example, we may take $n=100$ or even a bigger number can be chosen if needed. Then for each point $r_i$ in this partition, we apply the previous process and plot the points and compute the dessin d'enfant. An example can be found in Figure \ref{fig:sample}. 

\begin{figure}[ht!]
\centering
\includegraphics[width=80mm]{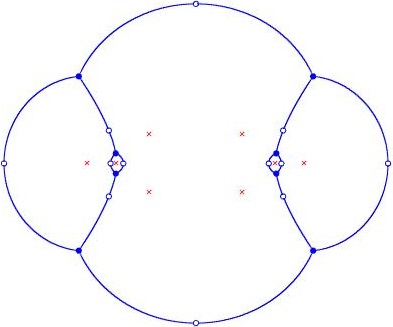}
\caption{Dessin d'enfant of the curve $(y-x^4)(y-x^2+1)(y-8x^2+16)=0$. Red $\times$-vertices are the points where singular fibers exist.}
\label{fig:sample}
\end{figure}

Since dessins d'enfants are embedded in $P^1=\mathbb{C}\cup \infty$, we just forget the point at infinity and plot the rest on the complex plane. However, it is easy to understand what happens on the point at infinity by just looking the behavior of the rest of the dessin.

Note that one can also plot color dessins d'enfants using our algorithm and Proposition \ref{proposition::valuesofj}.

\subsection{The Combinatorial Type Theorem}\label{combtype}

In this section, we state the combinatorial type theorem. We first define the combinatorial type of a dessin:

\begin{definition}\em
Each region of the complement of a dessin $\Gamma_C$ is an $m$-gon, where $m$ is equal to the number of the boundary edges. The {\it combinatorial type} of $\Gamma_C$ is the list of these $m$'s written in non-increasing order.
\end{definition}

For example, the dessin d'enfant in the Figure \ref{Fig5} has totally five regions, two of them is 6-gonal and three of them is bigonal. Hence, its combinatorial type is $[6,6,2,2,2]$. We omit the bivalent $\circ$-vertices while drawing dessins d'enfants for simplicity.

\begin{example}\em
Let $C$ be a completely reducible trigonal curve defined by $(y-x^3)(y+x^2)(y-1)=0$. Then $\Gamma_C$ is as in Figure \ref{Fig5}.

\begin{figure}[h!]
\centering
\includegraphics[width=40mm]{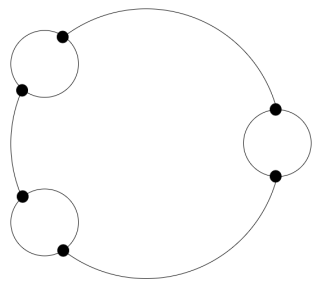}
\caption{Dessin of $(y-x^3)(y+x^2)(y-1)=0$.}
\label{Fig5}
\end{figure}

\end{example}

In the following theorem, we find an upper bound for the number of different combinatorial types of dessins d'enfants for a fixed maximal degree $n$. 

\begin{theorem}
\em Let $Comb(n)$ be the set of all combinatorial types of dessins of $C$. Then, we have 
\begin{equation}
|Comb(n)|\leq \frac{p(n)(p(n)+1)(p(n)+2)}{6} - K(n)
\label{mainthm}
\end{equation}with
$$
K(n)=\sum_{r=1}^{\lfloor \frac{n+1}{3} \rfloor}p(n,r)\sum_{j=r}^{\lfloor \frac{n+1-r}{2} \rfloor,}p(n,j)\sum_{i=j}^{n-j-r+1} p(n,i).
$$
where $p(n)$ is the number of integer partitions of $n$, $p(n,m)$ is the number of integer partitions of $n$ with length $m$.
\end{theorem}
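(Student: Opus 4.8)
The plan is to count combinatorial types by encoding each type as a triple of integer partitions of $n$, one partition per region-color-class (RB, BG, RG), and then to bound the number of such triples while discarding those that violate the structural constraints forced by Euler characteristic and the bigonal geometry of the cross-ratio graph. First I would recall from Proposition \ref{RBG} that the $6n$ edges split evenly, with exactly $2n$ edges devoted to each of the three region types. Fixing one region type, say RB, its regions partition these $2n$ edges; but since each region is an $m$-gon whose boundary alternates $\bullet$- and $\circ$-vertices, and a bigon uses $2$ edges, the number of RB-regions together with their sizes is recorded by a partition of $n$ (writing each $m$-gon as contributing $m/2$, so that the parts sum to $n$). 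Thus to each dessin I associate an \emph{unordered} triple of partitions of $n$, and the full combinatorial type $[m_1,m_2,\dots]$ is recovered by merging the three partitions and doubling. The count of unordered triples from a set of size $p(n)$ is exactly $\binom{p(n)+2}{3}=\frac{p(n)(p(n)+1)(p(n)+2)}{6}$, which is the leading term of the bound.

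Next I would subtract the triples that cannot be realized. The correction term $K(n)$ is built from a nested sum over lengths $r\le j\le i$ of the three partitions, where $\ell(\cdot)$ denotes the number of parts: the inner ranges $r\le j\le\lfloor\frac{n+1-r}{2}\rfloor$ and $j\le i\le n-j-r+1$ are precisely the inequalities that must \emph{fail} for a valid dessin. The idea is that the number of regions $R_{\Gamma_C}=r+j+i$ equals the total number of parts across the three partitions, and Proposition \ref{numberofregions} forces $n+2\le R_{\Gamma_C}\le 3n$; combining the lower bound $R_{\Gamma_C}\ge n+2$ with the ordering $r\le j\le i$ and the edge-count constraints ($r$ parts summing to $n$ forces $r\le n$, and two region classes of lengths $r,j$ leave at most $n-r-j+1$ parts available) yields exactly the summation bounds appearing in $K(n)$. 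Each forbidden triple is counted once, weighted by $p(n,r)\,p(n,j)\,p(n,i)$, the number of ways to realize partitions of those prescribed lengths; so $K(n)$ is the number of length-profiles $(r,j,i)$ that the region-count inequality rules out, and subtracting it removes the non-realizable triples from the crude $\binom{p(n)+2}{3}$ count.

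The steps in order: (1) establish the three-partitions-of-$n$ encoding from Proposition \ref{RBG}; (2) show the combinatorial type is a function of the unordered triple, giving the upper bound $\binom{p(n)+2}{3}$ on the number of types before constraints; (3) translate $R_{\Gamma_C}=r+j+i$ and feed in the region bounds of Proposition \ref{numberofregions} to identify which length-profiles are infeasible; (4) verify that the nested index ranges in $K(n)$ enumerate exactly those infeasible profiles, with the correct multiplicities $p(n,r)p(n,j)p(n,i)$; (5) conclude by subtraction. The main obstacle I anticipate is step (4): I must be careful that the ordering convention $r\le j\le i$ used to avoid overcounting unordered triples lines up cleanly with the asymmetric summation limits in $K(n)$, and that the feasibility condition from Proposition \ref{numberofregions} translates \emph{exactly} into those limits rather than merely bounding them. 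In particular I would need to check that no feasible type is accidentally subtracted (which would break the inequality) and that the passage from ``$\ge$'' in $R_{\Gamma_C}\ge n+2$ to the upper endpoints $\lfloor\frac{n+1-r}{2}\rfloor$ and $n-j-r+1$ is tight; this is where the counting argument is most delicate, since the statement yields only an inequality $|Comb(n)|\le\cdots$, reflecting that some length-profiles may survive the count without corresponding to an actual dessin.
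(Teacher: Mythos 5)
Your proposal takes essentially the same approach as the paper's proof: you encode each dessin as an unordered triple of integer partitions of $n$ (one per region color class, via Proposition \ref{RBG}), count the $\binom{p(n)+2}{3}=\frac{p(n)(p(n)+1)(p(n)+2)}{6}$ multisets to get the leading term, and then subtract the length profiles $r\le j\le i$ ruled out by the bound $R_{\Gamma_C}\ge n+2$ of Proposition \ref{numberofregions}, which is exactly how the paper builds $K(n)$. The delicate point you flag in step (4) --- that the nested summation limits must come precisely from $r+j+i\le n+1$ under the ordering convention, with multiplicities $p(n,r)p(n,j)p(n,i)$ --- is handled in the paper in just the way you anticipate.
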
 

\begin{proof}
Let $\Gamma\in \Gamma_\text{CRTC}(n)$. From Proposition \ref{RBG}, $\Gamma$ has three region types $RB,BG$ and $RG$ and $2n$ edges for each type. Colors of regions have no importance for the decomposition. Hence, WLOG, we work on $RG$ regions. Since $\Gamma$ has $2n$ edges in its $RG$ regions, it may have, for example, one $2n$-gonal $RG$ region only, or one $2(n-1)$-gonal and one bigonal $RG$ regions. More generally, for any integer partition of $n=n_1+n_2+...+n_r$, it is possible to have one $2n_1$-gonal $RG$ region, one $2n_2$-gonal $RG$ region, and so on till one $2n_r$-gonal $RG$ region. Hence, there are totally $p(n)$ different possible cases for $RG$ regions where $p(n)$ is the number of possible integer partitions of $n$. Similarly, this is true for the other region types. 

We order the integer partitions of a positive integer $n$ as $P(n)=\{\{n\},\{n-1,1\},\{n-2,2\},\{n-2,1,1\},...,\{1,...,1\}\}$.

Recall that the number of regions and the number of edges for each region define a combinatorial type. Hence, for example, the rows in Table \ref{table:class}-(A) show all possible combinatorial types, we call them \textit{pre-combinatorial types}, that have one $2n$-gonal $RB$ region, one $2n$-gonal $BG$ region. The different types in $RG$ regions can be deduced from the different integer partitions of $n$. Hence, there are totally $p(n)$ different pre-combinatorial types that have one $2n$-gonal $RB$ region and one $2n$-gonal $BG$ region. 

\begin{table}[H]
\centering
\caption{Different combinations to define the combinatorial types}
\begin{subtable}{.3\textwidth}
\centering
 \begin{tabular}{|c|c|c|}
 \hline
 $RB$ & $BG$ & $RG$\\
 \hline
 2n & 2n & 2n\\
 2n & 2n & 2(n-1),2\\
 \vdots & \vdots & \vdots \\ 
 2n & 2n & 2,...,2 \\
 \hline
\end{tabular}
\caption{}
\end{subtable}%
\begin{subtable}{.33\textwidth}
\centering
\begin{tabular}{|c|c|c|}
 \hline
 $RB$ & $BG$ & $RG$\\
 \hline
 2n & 2(n-1),2 & 2(n-1),2\\
 2n & 2(n-1),2 & 2(n-2),4\\
 \vdots & \vdots & \vdots \\ 
 2n & 2(n-1),2 & 2,...,2 \\
 \hline
\end{tabular}
\caption{}
\end{subtable}%
\begin{subtable}{.33\textwidth}
\centering
\begin{tabular}{|c|c|c|}
 \hline
 $RB$ & $BG$ & $RG$\\
 \hline
 p(n,1) & p(n,1) & p(n,1)\\
 p(n,2) & p(n,2) & p(n,2)\\
 \vdots & \vdots & \vdots \\ 
 p(n,n) & p(n,n) & p(n,n) \\
 \hline
\end{tabular}
\caption{}
\end{subtable}

\label{table:class}
\end{table}

Similarly, we can list the pre-combinatorial types that have one $2n$-gonal $RB$ region, one $2(n-1)$ and one $2$-gonal $BG$ regions as in Table \ref{table:class}-(B). There are $p(n)-1$ pre-combinatorial types (We do not take the case of one $2n$-gonal $RG$ region into consideration to avoid a repetition since it is already counted in the previous case). If we continue to construct the pre-combinatorial types that has one $2n$-gonal $RB$ region, we get totally $\sum_{i=1}^{p(n)} i$ types. 

More generally, let $n_k=\{n_{k,1},...,n_{k,r}\}$ be the $k$-th partition in $P(n)$. The number of pre-combinatorial types that have a $2 n_{k,1}$-gonal $RB$ region, ..., a $2 n_{k,r}$-gonal $RB$ region is $\sum_{i=1}^{p(n)-k+1} i$. Therefore, there are totally
\begin{equation}
\sum_{i=1}^{p(n)} i + \sum_{i=1}^{p(n)-1} i + ... + \sum_{i=1}^{1} i = \frac{p(n)(p(n)+1)(p(n)+2)}{6}
\label{equ1}
\end{equation}
pre-combinatorial types.
 
From Proposition \ref{numberofregions}, we have $R_{\Gamma_C}\geq n+2$. Hence, we need to eliminate the pre-combinatorial types that have less than $n+2$ regions. 

We first group all possible cases in each $RB,BG$ and $RG$ types in terms of the number of the regions. For each region type, we get $n$ different groups that have $p(n,1), p(n,2), ..., p(n,n-1), p(n,n)$ different pre-combinatorial types where $p(n,m)$ is the number of integer partitions of $n$ with length $m$, see Table \ref{table:class}-(C).

We first find the number of the pre-combinatorial types that have 1 $RB$ region and 1 $BG$ region and have less than $n+2$ regions in total which is
$$
p(n,1)p(n,1)\sum_{i=1}^{n-1} p(n,i).
$$
Similarly, the number of pre-combinatorial types that have 1 $RB$ region and $m_1$ $BG$ regions where $m_1<n$ and have less than $n+2$ regions in total is
$$
p(n,1)p(n,m_1)\sum_{i=m_1}^{n-m_1} p(n,i).
$$
However, for $m_1> \lfloor \frac{n}{2} \rfloor$, the permutation will start to repeat. Hence, the number of pre-combinatorial types that have 1 $RB$ region and have totally less than $n+2$ regions is
$$
p(n,1)\sum_{j=1}^{\lfloor \frac{n}{2} \rfloor,}p(n,j)\sum_{i=j}^{n-j} p(n,i).
$$
Similarly, the number of the pre-combinatorial types, that have 2 $RB$ regions and less than $n+2$ regions is
$$
p(n,2)\sum_{j=2}^{\lfloor \frac{n-1}{2} \rfloor,}p(n,j)\sum_{i=j}^{n-j-1} p(n,i).
$$
More generally, the number of the pre-combinatorial types that have  $r$ $RB$ regions and less than $n+2$ regions is
$$
p(n,r)\sum_{j=r}^{\lfloor \frac{n+1-r}{2} \rfloor,}p(n,j)\sum_{i=j}^{n-j-r+1} p(n,i).
$$
Note that when there exists more than $r=\lfloor \frac{n+1}{3} \rfloor$ $RB$ regions, the same pre-combinatorial types will be obtained. Therefore, the total number of the pre-combinatorial types that have less than $n+2$ regions is

\begin{equation}
\sum_{r=1}^{\lfloor \frac{n+1}{3} \rfloor}p(n,r)\sum_{j=r}^{\lfloor \frac{n+1-r}{2} \rfloor,}p(n,j)\sum_{i=j}^{n-j-r+1} p(n,i).
\label{equ2}
\end{equation}

If we combine (\ref{equ1}) and (\ref{equ2}), we obtain the main result.
\end{proof}

By the previous theorem, we find not only an upper bound for the number of the different combinatorial types for a fixed maximal degree, but also all pre-combinatorial types that construct this upper bound in the proof. However these pre-combinatorial types may not be an actual dessin d'enfant. In the next section, we list exact types up to the maximal degree 4.
 
\subsection{Combinatorial Types for Some Special Maximal Degrees}\label{spec_deg}

In this section, we list all possible combinatorial types for $\mathfrak{d}(C) \in \{1,2,3,4\}$. For each maximal degree, we first use the combinatorial type theorem to get all possible types and then prove whether they are realizable based on our experiment results. 

\begin{proposition}\em
If $\mathfrak{d}=1$, the only combinatorial type is $[2,2,2]$.
\end{proposition}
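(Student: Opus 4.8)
The plan is to reduce the entire statement to the edge count supplied by Proposition \ref{RBG}. Setting $n=\mathfrak{d}(C)=1$, that proposition tells us that $\Gamma_C$ carries exactly $2n=2$ edges in each of the three region types $RB$, $BG$ and $RG$. Since the boundary of every region is a closed walk that alternates $\bullet$- and $\circ$-vertices, it uses an even number of edges, and the smallest possible region is therefore a bigon (two edges joining one $\bullet$- and one $\circ$-vertex); there are no monogons because every edge joins vertices of different colors. Consequently a region type possessing only $2$ edges cannot be split among two or more regions: it must consist of a single bigonal region. Applying this to each of the three region types yields exactly three regions, each a $2$-gon.

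As a consistency check I would invoke Proposition \ref{numberofregions}, which for $n=1$ forces $3=n+2\leq R_{\Gamma_C}\leq 3n=3$, so $R_{\Gamma_C}=3$ independently; this confirms that no region type is empty and pins the region count to $3$. Reading off the list of edge-numbers in non-increasing order then gives the combinatorial type $[2,2,2]$, and the argument shows this is forced for \emph{every} CRTC with $\mathfrak{d}(C)=1$, so it is the only possibility.

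The remaining point — and essentially the only place where something could go wrong — is realizability, i.e.\ checking that the set $Comb(1)$ is nonempty. I would exhibit a concrete CRTC of maximal degree $1$ whose dessin is well-defined (no triple intersection, as required at the end of Section \ref{section:2}) and does attain $[2,2,2]$: for instance three affine-linear components in generic position such as $(y-x)(y-2x)(y+1)=0$. One verifies that the pairwise differences $y_i-y_j$ have no common root, so $\phi_1$ has the expected degree and no triple intersection occurs, and then applies the algorithm of Subsection \ref{dessin-crtc} to confirm one bigon of each color-pair. The main obstacle is thus not the counting argument, which is immediate from Propositions \ref{RBG} and \ref{numberofregions}, but making sure the single admissible type is genuinely attained by some curve; degenerate linear configurations must be excluded, but a generic choice suffices.
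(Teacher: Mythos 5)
Your proposal is correct and follows essentially the same logical skeleton as the paper's proof: an upper-bound count forcing $[2,2,2]$ as the only possible type, followed by an explicit non-degenerate realizing curve. The only differences are cosmetic — the paper simply cites its combinatorial type theorem to get $|Comb(1)|\leq 1$, whereas you inline the $n=1$ case of that count directly from Proposition \ref{RBG} (with the no-monogon observation and the cross-check against Proposition \ref{numberofregions}), and you realize the type with $(y-x)(y-2x)(y+1)=0$ instead of the paper's $(y-x)(y+x)(y-1)=0$.
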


\begin{proof}
From the combinatorial type theorem, we get $|Comb(1)|\leq 1$ and the type is $[2,2,2]$. We realize this type as the dessin d'enfant of the curve $C:(y-x)(y+x)(y-1)=0$.
\end{proof}

\begin{figure}[h!]
\centering
\includegraphics[width=13mm]{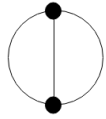}
\caption{An example for the combinatorial type $[2,2,2]$ for $\mathfrak{d}(C)=1$.}
\label{d1dessin}
\end{figure}

\begin{proposition}\em
If $\mathfrak{d}=2$, there are three combinatorial types as $[4,4,2,2],[4,2,2,2,2]$ and $[2,2,2,2,2,2]$.
\end{proposition}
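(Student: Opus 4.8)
The plan is to combine the upper bound from the combinatorial type theorem with a direct enumeration, and then to settle realizability by exhibiting explicit curves. First I would specialize Theorem~\ref{mainthm} to $n=2$. Since the partitions of $2$ are $\{2\}$ and $\{1,1\}$ we have $p(2)=2$, so the leading term is $\tfrac{p(2)(p(2)+1)(p(2)+2)}{6}=\tfrac{2\cdot 3\cdot 4}{6}=4$. For $K(2)$ the outer index runs only over $r=1$ because $\lfloor\tfrac{n+1}{3}\rfloor=\lfloor\tfrac{3}{3}\rfloor=1$, and since $p(2,1)=p(2,2)=1$ the inner sums collapse to a single term, giving $K(2)=p(2,1)\,p(2,1)\,p(2,1)=1$. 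Hence $|Comb(2)|\le 4-1=3$.

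Next I would make the four pre-combinatorial types explicit, so that the elimination is transparent. By Proposition~\ref{RBG}, each of the three region types $RB$, $BG$, $RG$ carries exactly $2n=4$ edges, so for each type the partition of $n=2$ forces its regions to be \emph{either} a single $4$-gon (from the partition $\{2\}$) \emph{or} two bigons (from the partition $\{1,1\}$). Since region colors are irrelevant to the combinatorial type, the admissible configurations are the multisets of these two choices over the three region types, namely $[4,4,4]$, $[4,4,2,2]$, $[4,2,2,2,2]$, and $[2,2,2,2,2,2]$; there are four of them, matching the count $4$ above. I would then invoke Proposition~\ref{numberofregions}, which gives $R_{\Gamma_C}\ge n+2=4$. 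The type $[4,4,4]$ has only three regions and is therefore excluded, while the remaining three types each have at least four regions and survive. This reproduces the bound $|Comb(2)|\le 3$ and pins down which three types the bound permits.

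Finally I would address realizability, which is the genuine content beyond the bound: Theorem~\ref{mainthm} only counts pre-combinatorial types, and a priori some might fail to occur as an actual $\Gamma_C$. For each of the three surviving types I would exhibit a completely reducible trigonal curve of maximal degree $2$ and compute its dessin using the algorithm of Section~\ref{dessin-crtc}, displaying the resulting dessin (as in the $\mathfrak{d}=1$ case). Concretely, I expect to choose the three quadratic parts $y_1,y_2,y_3$ so that the cross-ratio map $\phi_1$ wraps over the three region types of the cross-ratio graph with the prescribed degeneracy pattern—maximally splitting all three types to get $[2,2,2,2,2,2]$, splitting exactly two to get $[4,2,2,2,2]$, and exactly one to get $[4,4,2,2]$.

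The main obstacle is this last step. Producing the bound and the list of candidate types is routine arithmetic with $p(n,m)$ and Euler characteristic, but verifying that each candidate is actually attained requires finding explicit curves whose computed dessin realizes the prescribed region pattern, and this is a matching problem between the algebra of $\phi_1$ and the combinatorics of the cross-ratio graph rather than a formal deduction. I would therefore lean on the explicit construction and the computational experiment to close the three realizability cases.
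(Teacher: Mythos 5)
Your proposal follows essentially the same route as the paper: the paper's proof likewise invokes the combinatorial type theorem to get $|Comb(2)|\leq 3$ with pre-combinatorial types $[4,4,2,2]$, $[4,2,2,2,2]$, $[2,2,2,2,2,2]$, and then settles realizability by exhibiting explicit curves (e.g.\ $(y-x^2+1)(y+x)(y-x)$, $(y-x^2+1)(y+x)(y-x-4)$, and $(y-x^2+1)(y+x+0.25)(y-x+0.25)$) whose dessins are computed with the algorithm of Section~\ref{dessin-crtc}. Your unpacking of the arithmetic ($4-K(2)=3$) and the elimination of $[4,4,4]$ via $R_{\Gamma_C}\geq n+2$ is correct and simply makes explicit what the paper leaves to the cited theorem.
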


\begin{proof}
From the combinatorial type theorem, we get $|Comb(2)|\leq 3$ and the pre-combinatorial types as $[4,4,2,2],[4,2,2,2,2]$ and $[2,2,2,2,2,2]$. Trigonal curve and the corresponding dessin examples for each type can be found in Table \ref{table:1} and Table \ref{fig:decomp2} respectively.
\begin{table}[H]
\centering
\caption{Combinatorial types and example for each type when $\mathfrak{d}(C)=2$}
\begin{tabular}{||P{4cm} || P{6cm}||} 
 \hline
 Decomposition type & Examples \\ [0.5ex] 
 \hline\hline
$[4,4,2,2]$ & $(y-x^2+1)(y+x)(y-x)$ \\ 
  $[4,2,2,2,2]$ & $(y-x^2+1)(y+x)(y-x-4)$ \\
 $[2,2,2,2,2,2]$ & $(y-x^2+1)(y+x+0.25)(y-x+0.25)$   \\[1ex] 
 \hline
\end{tabular}

\label{table:1}
\end{table}
 
\end{proof}
\begin{table}[H]
\begin{center}
\caption{Dessin examples for each combinatorial type when $\mathfrak{d}(C)=2$.}
\begin{tabular}{|m{4cm}| m{4cm}| m{4cm}|} 
\hline
\centering\arraybackslash\includegraphics[width=30mm]{DessinDelta2_2.png} & \centering\arraybackslash\includegraphics[width=25mm]{DessinDelta2_1.png} & \centering\arraybackslash\includegraphics[width=24mm]{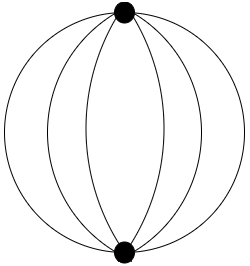} \\ \hline
\centering\arraybackslash$[4,4,2,2]$ & \centering\arraybackslash$[4,2,2,2,2]$ & \centering\arraybackslash$[2,2,2,2,2,2]$ \\ \hline
\end{tabular}

\label{fig:decomp2}
\end{center}
\end{table}

\begin{proposition}\em
If $\mathfrak{d}=3$, there are 8 different combinatorial types.
\end{proposition}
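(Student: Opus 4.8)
The plan is to run the combinatorial type theorem at $n=3$ to extract the numerical bound $8$, and then to exhibit a concrete completely reducible trigonal curve realizing each of the surviving pre-combinatorial types. First I would evaluate the right-hand side of (\ref{mainthm}). Since $p(3)=3$, the leading term is $\frac{3\cdot 4\cdot 5}{6}=10$. For the correction term $K(3)$, the outer index range $\lfloor\frac{n+1}{3}\rfloor=1$ forces $r=1$, and then $\lfloor\frac{n+1-r}{2}\rfloor=1$ forces $j=1$, so that
$$
K(3)=p(3,1)\,p(3,1)\sum_{i=1}^{2}p(3,i)=1\cdot 1\cdot\bigl(p(3,1)+p(3,2)\bigr)=2.
$$
Hence $|Comb(3)|\le 10-2=8$, which already matches the asserted count, provided every surviving type is actually realizable.

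Next I would make the enumeration inside the theorem explicit. By Proposition \ref{RBG}, each of the three region types $RB$, $BG$, $RG$ carries $2n=6$ edges, and its internal subdivision is governed by a partition of $3$: the partition $\{3\}$ gives a single $6$-gon, $\{2,1\}$ gives a $4$-gon together with a bigon, and $\{1,1,1\}$ gives three bigons. A pre-combinatorial type is therefore an unordered triple of such partitions, which reproduces the $10$ cases counted in (\ref{equ1}). Merging the three region lists and recording the total number of regions, the two types with fewer than $n+2=5$ regions are $[6,6,6]$ (three regions) and $[6,6,4,2]$ (four regions); these are exactly the $K(3)=2$ cases excluded by Proposition \ref{numberofregions}. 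The eight remaining candidates are
$$
[6,6,2,2,2],\quad [6,4,4,2,2],\quad [6,4,2,2,2,2],\quad [4,4,4,2,2,2],
$$
$$
[6,2,2,2,2,2,2],\quad [4,4,2,2,2,2,2],\quad [4,2,2,2,2,2,2,2],\quad [2,2,2,2,2,2,2,2,2].
$$

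Finally I would establish realizability: for each of these eight candidate types I would produce a CRTC $(y-y_1)(y-y_2)(y-y_3)=0$ of maximal degree $3$ whose dessin, computed by the algorithm described in Section \ref{ch:comb}, has precisely that combinatorial type. Following the degree-$2$ template of Table \ref{table:1}, one component can be fixed of degree $3$, say $y_1=x^3$, while the remaining components and their constant shifts are tuned so that the intersection pattern of the three components forces the prescribed regional subdivision. I expect this last step to be the main obstacle: the theorem gives only an upper bound, so each type must be exhibited individually, and the delicate point is to arrange the cross-ratio $\lambda=\frac{y_1-y_3}{y_2-y_3}$ so that the pullback of the cross-ratio graph under $\phi_1$ merges or splits regions in exactly the required way. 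This is carried out through the computer experimentation described earlier, and collecting the eight examples into a table completes the proof.
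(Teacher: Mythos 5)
Your proposal is correct and follows essentially the same route as the paper: invoke the combinatorial type theorem to get the upper bound $|Comb(3)|\leq 8$ (your explicit evaluation of $p(3)=3$, the leading term $10$, and $K(3)=2$, as well as your identification of the excluded types $[6,6,6]$ and $[6,6,4,2]$, are all accurate), and then realize each of the eight surviving types by an explicit completely reducible trigonal curve found through computer experimentation. The paper does exactly this, simply citing the theorem and presenting a table of example curves, one per type.
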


\begin{proof}
From the combinatorial type theorem, we get $|Comb(3)|\leq 8$. The types and example for each type can be found in Table \ref{table:dec}.
\begin{table}[H]
\centering
\caption{Combinatorial types and example for each type when $\mathfrak{d}(C)=3$}
\begin{tabular}{||P{4cm} || P{10cm}||} 
 \hline
 Decomposition Type & Examples \\ [0.5ex] 
 \hline\hline
 $[6,6,2,2,2]$& $(y-x^3-x^2-1)(y+2x^2+2)(y+2)$\\
 $[6,4,4,2,2]$& $(y-x^3-x^2-1)(y+2x^2-1)(y+2)$\\
 $[6,4,2,2,2,2]$& $(y-x^3-x^2-1)(y+2x^2)(y+2)$ \\
 $[6,2,2,2,2,2,2]$& $(y-x^3-x^2)(y-x^3-2x^2+x)(y+1.5)$\\
 $[4,4,4,2,2,2]$& $(y-2x^3+3x^2+6x-2)(y+4x^2+2x-3)(y-3x^2+x-3.5)$\\
 $[4,4,2,2,2,2,2]$ & $(y-x^3-3x^2-x-1)(y+x^2-x-2)(y+2x^3+9x^2+3x-2)$\\
 $[4,2,2,2,2,2,2,2]$ & $(y-x^3-3x^2-x-1)(y+x^2-x-2)(y+2x^3+9x^2+3.06x-2)$\\
 $[2,2,2,2,2,2,2,2,2]$ & $(y-x^3+3x-1)(y-3x^2+3x)(y)$  \\[1ex] 
 \hline
\end{tabular}

\label{table:dec}
\end{table}

\end{proof}

\begin{center}

\begin{table}[H]
\centering
\caption{Dessin examples for each combinatorial type when $\mathfrak{d}=3$}
\begin{tabular}{|m{4cm}| m{4cm}| m{4cm}|} 

\hline 
\centering\arraybackslash\includegraphics[width=33mm]{DessinDelta3_4.png} & \centering\arraybackslash\includegraphics[width=34mm]{DessinDelta3_2.png} & \centering\arraybackslash\includegraphics[width=35mm]{DessinDelta3_3.png} \\ \hline
\centering\arraybackslash$[6,6,2,2,2]$ & \centering\arraybackslash$[6,4,4,2,2]$ & \centering\arraybackslash$[6,4,2,2,2,2]$ \\ \hline
\centering\arraybackslash\includegraphics[width=35mm]{DessinDelta3_1.png} & \centering\arraybackslash\includegraphics[width=34mm]{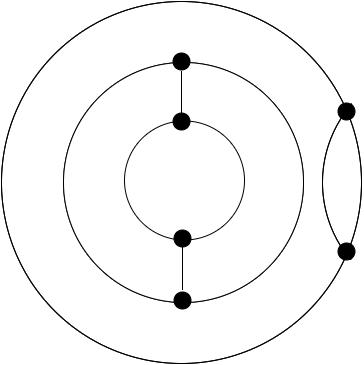} & \centering\arraybackslash\includegraphics[width=34mm]{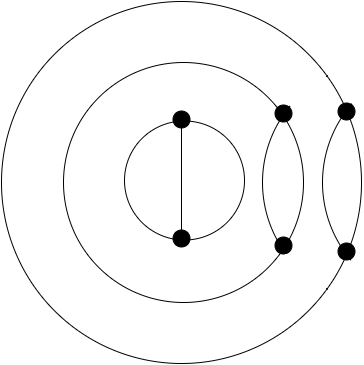} \\ \hline
\centering\arraybackslash$[6,2,2,2,2,2,2]$ & \centering\arraybackslash$[4,4,4,2,2,2]$ & \centering\arraybackslash$[4,4,2,2,2,2,2]$ \\ \hline
\centering\arraybackslash\includegraphics[width=34mm]{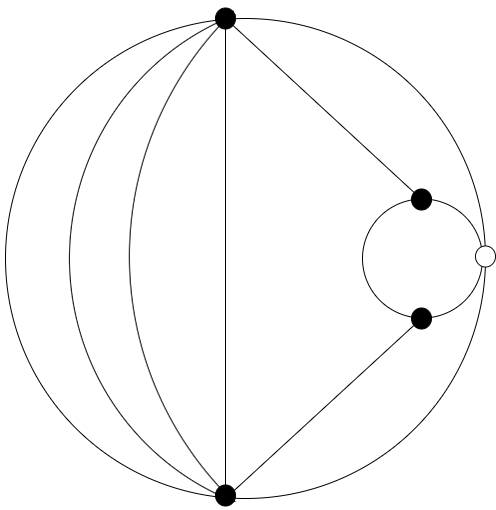} & \centering\arraybackslash\includegraphics[width=35mm]{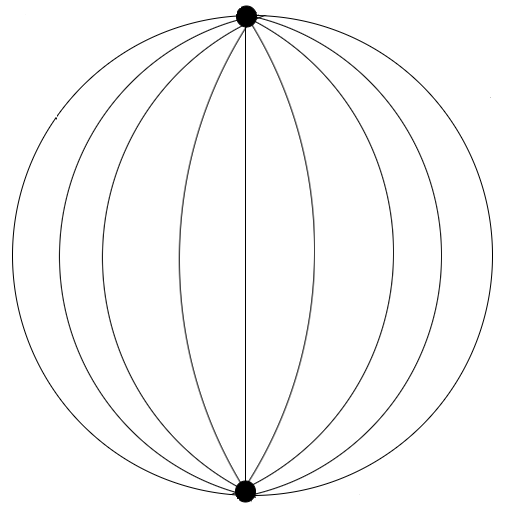} &  \\ \hline
\centering\arraybackslash$[4,2,2,2,2,2,2,2]$ & \centering\arraybackslash$[2,2,2,2,2,2,2,2,2]$ &  \\ \hline
\end{tabular}

\end{table}
\end{center}

For $\mathfrak{d}=4$, there are $23$ pre-combinatorial types from Theorem \ref{mainthm} but not all of them represent a combinatorial type.
\begin{proposition}\label{prop:19}\em
$[6,4,4,4,4,2]$ is not a combinatorial type.
\end{proposition}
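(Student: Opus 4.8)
The plan is to reduce the claim to a non-existence statement about branched coverings of $P^1$ and then to an elementary group-theoretic obstruction inside $S_4$.

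First I would unpack the combinatorial type. By Proposition \ref{RBG}, every dessin of a CRTC of maximal degree $n=4$ distributes its regions among the three types $RB$, $BG$, $RG$, each carrying $2n=8$ boundary edges, so the sizes of the regions of a single type record a partition of $n=4$ (a $2k$-gon corresponding to a part equal to $k$). Halving the entries of $[6,4,4,4,4,2]$ gives the multiset $\{3,2,2,2,2,1\}$, and I would observe that there is a unique way to split this into three partitions of $4$: the part $3$ can only be completed to $4$ by the single part $1$, after which the remaining $\{2,2,2,2\}$ must split as $\{2,2\}$ and $\{2,2\}$. Hence the only decomposition compatible with $[6,4,4,4,4,2]$ assigns the profiles $(3,1)$, $(2,2)$, $(2,2)$ to the three region types. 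I note that this type has $6=n+2$ regions, so the bound of Proposition \ref{numberofregions} does not exclude it and a finer obstruction is required.

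Next I would translate this into ramification data of $\phi_1$. Recall from Remark \ref{remark::rem1} that $j_C=\phi_2\circ\phi_1$ and that $\Gamma_C=\phi_1^{-1}(\text{cross-ratio graph})$, where $\phi_1:P^1\to P^1$ is the degree-$n$ map given by the cross ratio, and the cross-ratio graph has exactly three $\times$-vertices (the poles of $j$, one in each of its three bigonal regions). A region of $\Gamma_C$ that is a $2k$-gon is a $k$-fold cover, branched at its central $\times$-vertex, of the corresponding region of the cross-ratio graph, so the partition attached to each region type is precisely the ramification profile of $\phi_1$ over the corresponding pole. Thus realizing $[6,4,4,4,4,2]$ amounts to producing a degree-$4$ map $\phi_1:P^1\to P^1$ whose ramification over three distinct points has profiles $(3,1)$, $(2,2)$, $(2,2)$. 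Applying Riemann–Hurwitz, $\sum_b(e_b-1)=2\cdot 4-2=6$, while the three prescribed points already contribute $2+2+2=6$; since any further ramification point would contribute a positive amount, $\phi_1$ can have no other branch points.

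The crux is then the Hurwitz existence problem for these three branch points. By the Riemann existence theorem, such a $\phi_1$ exists if and only if there are permutations $\sigma_1,\sigma_2,\sigma_3\in S_4$ of cycle types $(3,1)$, $(2,2)$, $(2,2)$ with $\sigma_1\sigma_2\sigma_3=1$. I would finish by noting that $\sigma_1=(\sigma_2\sigma_3)^{-1}=\sigma_3\sigma_2$ is then a product of two double transpositions; but the double transpositions together with the identity form the Klein four-group $V_4\subseteq A_4$, a normal subgroup closed under products and inverses, so $\sigma_1\in V_4$. As $V_4$ contains no $3$-cycle, $\sigma_1$ cannot have cycle type $(3,1)$, a contradiction. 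Hence no such $\phi_1$, and therefore no CRTC realizing $[6,4,4,4,4,2]$, exists. The main obstacle is the middle step: making precise and justifying the dictionary between the sizes of the regions of $\Gamma_C$ and the local degrees of $\phi_1$ over the three poles, so that the combinatorial type genuinely pins down the three ramification profiles, and verifying via Riemann–Hurwitz that no auxiliary branch point can be hidden to rescue the construction. Once that dictionary is in place, the group-theoretic step is immediate.
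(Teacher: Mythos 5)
Your strategy is genuinely different from the paper's: the paper rules out $[6,4,4,4,4,2]$ by a case-by-case analysis of how a hexagon and four quadrilaterals could be assembled from trivalent $\bullet$-vertices (Table \ref{table:predessin}), whereas you reduce to a Hurwitz existence problem and kill it with the Klein four-group obstruction in $S_4$. Your first step (the unique splitting of $\{3,2,2,2,2,1\}$ into the three partitions $(3,1),(2,2),(2,2)$, via Proposition \ref{RBG}) and your final group-theoretic step are both correct. But the middle step --- the one you yourself flag as the main obstacle --- is a genuine gap, and moreover the statement you rely on is false as phrased. You assert that a $2k$-gonal region is a $k$-fold cover of the corresponding bigon ``branched at its central $\times$-vertex,'' so that the region partitions are literally the ramification profiles of $\phi_1$ over the three poles. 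A $2k$-gonal region is indeed a proper degree-$k$ cover of an open disk, but its $k-1$ units of ramification can sit anywhere inside it: it may contain up to $k$ distinct $\times$-vertices, and it may contain branch points of $\phi_1$ away from the pole altogether. (The paper's own first proposition in Section \ref{CH:5}, that a $2m$-gonal region contains between $1$ and $m$ $\times$-vertices, is exactly this phenomenon.) Consequently the combinatorial type does not pin down the ramification profiles over three points, and your Riemann--Hurwitz conclusion that ``$\phi_1$ can have no other branch points'' is not available as stated.

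The gap is fillable, and the repair stays close to your plan. What the type does determine is the monodromy \emph{around each region}, not the profile \emph{over each pole}. First note that since each $2k$-gonal region is an open disk properly covering an open disk with degree $k$, it carries exactly $k-1$ units of ramification; summing over all regions gives $3n-F=12-6=6=2n-2$, the entire Riemann--Hurwitz budget. Hence for this type $\phi_1$ has no branching over the cross-ratio graph itself (no merged or monochrome vertices --- this also sharpens the equality case of Proposition \ref{numberofregions}), so $\phi_1$ is an unbranched covering over a closed regular neighborhood $N$ of the graph. Now $N$ is a pair of pants whose three boundary circles enclose the three regions of the graph; the monodromy permutation of the circle enclosing a given region type decomposes into one $k$-cycle for each $2k$-gonal region of that type (each such region meets the preimage of the circle in a single circle covering it with degree $k$), so the three boundary monodromies $\sigma_1,\sigma_2,\sigma_3\in S_4$ have cycle types $(3,1),(2,2),(2,2)$ and satisfy $\sigma_1\sigma_2\sigma_3=1$, since that relation holds among the boundary loops in $\pi_1(N)$. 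Your $V_4$ argument then yields the contradiction verbatim, with no appeal to connectedness or transitivity. With this substitution your proof is complete, and it is arguably more conceptual and more portable than the paper's pictorial analysis: the same boundary-monodromy obstruction can be tested against other pre-combinatorial types, at least those with exactly $n+2$ regions, where branching over the graph is automatically excluded.
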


\begin{proof}
There are either 6, if two $\bullet$-vertices are merged, or 8 $\bullet$- vertices. 
\begin{table}[H]
\centering
\caption{Different cases in the proof of Proposition \ref{prop:19}}
\begin{tabular}{|m{4cm}| m{4cm}| m{4cm}|} 

\hline 
\centering\arraybackslash\includegraphics[width=34mm]{case1_1_a.png} & \centering\arraybackslash\includegraphics[width=35mm]{yhexa1n.png} & \centering\arraybackslash\includegraphics[width=35mm]{case2_1_an.png} \\ \hline
\centering\arraybackslash a. Case 1 & \centering\arraybackslash b. Case 2.1 & \centering\arraybackslash c. Case 2.2.1 \\ \hline
\centering\arraybackslash\includegraphics[width=35mm]{case2_1_abn.png} & \centering\arraybackslash\includegraphics[width=34mm]{case2_2_an.png} & \centering\arraybackslash\includegraphics[width=40mm]{case2_2_bn.png} \\ \hline
\centering\arraybackslash d. Case 2.2.1 & \centering\arraybackslash e. Case 2.2.2.1 & \centering\arraybackslash f. Case 2.2.2.1 \\ \hline
\end{tabular}

\label{table:predessin}
\end{table}

\textbf{Case 1:} Let assume that there are 6 $\bullet$-vertices. The 6-gonal region is located as in Table \ref{table:predessin}-a. WLOG, we assume that $v_2$ and $v_5$ are merged vertices and have two edges between them (see the dotted edges in Figure \ref{table:predessin}). Now, there exist two four-gonal regions. However there are 4 four-gonal regions in the pre-combinatorial type $[6,4,4,4,4,2]$ and there is no other way to put another four-gonal region on Figure \ref{table:predessin}-a. Hence, we get a contradiction.

\textbf{Case 2:} Let assume that there are 8 $\bullet$-vertices. We first locate the hexagon as in Figure \ref{table:predessin}-b. There are two different cases to locate a 4-gonal region.

\textbf{Case 2.1:} Let assume the 4-gonal region shares 2 edges with the hexagon. WLOG, we assume that there are the edges $v_7-v_2$ and $v_7-v_4$ (see Figure \ref{table:predessin}-b). Then, $v_3$ cannot have an edge since we cannot put an edge inside the hexagon. This is a contradiction since each $\bullet$- vertex is trivalent.

\textbf{Case 2.2:} Let assume the 4-gonal region shares only one edge with the hexagon. In this case, there are two sub-cases as well.

\textbf{Case 2.2.1:} There are two edges between $v_7$ and $v_8$. WLOG, we assume there are edges $v_7-v_2$ and $v_8-v_3$ (see Figure \ref{table:predessin}-d). Then, we cannot add more edges to $v_2,v_3,v_7$ and $v_8$ since they are trivalent. The only way to have another four-gonal region is adding an edge between $v_1$ and $v_4$ as in Figure \ref{table:predessin}-d. However we cannot put any other four-gonal region in to this graph. Hence, we get a contradiction.

\textbf{Case 2.2.2:} There is only one edge between $v_7$ and $v_8$. Since we assume that the first square is inserted into the graph shares one edge with the hexagon, WLOG, we can have edges the $v_7-v_2$ and $v_8-v_3$. 

Here, we have two sub-cases:

\textbf{Case 2.2.2.1:} We have the edge $v_7-v_6$ (see Figure \ref{table:predessin}-e). In this case, we cannot add any edge to the vertex $v_1$, since it has to be trivalent. This is a contradiction.

\textbf{Case 2.2.2.2:} Assume there is the edge $v_7-v_4$ (see Figure \ref{table:predessin}-f). Similarly, we cannot add any edge to the vertex $v_8$. This is also a contradiction. 

Therefore, the pre-combinatorial type $[6,4,4,4,4,2]$ is not a combinatorial type.

\end{proof}

\begin{proposition}\em
When $\mathfrak{d}=4$, there are 22 different combinatorial types.
\end{proposition}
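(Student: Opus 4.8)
The plan is to match an upper bound of $22$ against $22$ explicit realizations. I would organize the argument in three stages: count the candidate types from Theorem \ref{mainthm}, discard the unique non-realizable candidate using Proposition \ref{prop:19}, and then produce a witnessing curve for each of the remaining types. The first stage is immediate: applying Theorem \ref{mainthm} with $n=4$ produces the $23$ pre-combinatorial types already tabulated before Proposition \ref{prop:19}. These are exactly the lists of even region sizes consistent with the constraints that each of the three color classes $RB$, $BG$, $RG$ contributes $2n=8$ boundary edges (Proposition \ref{RBG}) and that the total number of regions is at least $n+2=6$ (Proposition \ref{numberofregions}).

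For the second stage, Proposition \ref{prop:19} shows that $[6,4,4,4,4,2]$ cannot arise as the combinatorial type of any dessin, so it must be struck from the list. This leaves $23-1=22$ pre-combinatorial types as the only possibilities, giving the upper bound $|Comb(4)|\le 22$.

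The third stage, realizability, is the heart of the argument and the main obstacle. Here I would show that each of the $22$ surviving pre-combinatorial types is actually attained, by exhibiting for each one an explicit completely reducible trigonal curve $(y-y_1)(y-y_2)(y-y_3)=0$ with $\mathfrak{d}(C)=4$ whose dessin $\Gamma_C$ has precisely the prescribed region structure. This follows the template already used for $\mathfrak{d}\le 3$ in Tables \ref{table:1} and \ref{table:dec}, so I would assemble a comparable table pairing each type with a witness. The delicate point is choosing the coefficients of the $y_i$ so that the roots of the defining equations in Proposition \ref{proposition::valuesofj} coalesce in exactly the right pattern --- this controls the branching of $\phi_1$ and hence how adjacent bigonal cells of the cross-ratio graph fuse into larger $4$-gonal and $6$-gonal regions --- while simultaneously avoiding triple intersections, which would make $\Gamma_C$ undefined. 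For each candidate I would confirm the region count and the number of edges per region by running the algorithm of Section \ref{dessin-crtc} on the partition of $[0,1]$ and reading off the resulting graph.

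Combining the upper bound $|Comb(4)|\le 22$ from the first two stages with the $22$ explicit realizations of the third forces equality, establishing that there are exactly $22$ combinatorial types when $\mathfrak{d}=4$.
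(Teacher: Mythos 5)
Your proposal follows exactly the paper's own argument: the paper likewise invokes the combinatorial type theorem to obtain the bound $|Comb(4)|\leq 23$, removes $[6,4,4,4,4,2]$ via Proposition \ref{prop:19}, and realizes the remaining $22$ types by exhibiting explicit completely reducible trigonal curves and their dessins (Tables \ref{table:11} and \ref{table:2}), computed with the algorithm of Section \ref{dessin-crtc}. Your three-stage structure and the realization-by-witness strategy are the same, so there is nothing to add.
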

\begin{proof}
From the combinatorial type theorem, we get $|Comb(4)|\leq 23$. From Proposition \ref{prop:19}, $[6,4,4,4,4,2]$ is not a combinatorial type. The remaining pre-combinatorial types and example for each type can be found in Table \ref{table:11} and Table \ref{table:2}.
\end{proof}

\begin{table}[h!]
\centering
\caption{Combinatorial types and example for each type when $\mathfrak{d}(C)=4$}
\begin{tabular}{||P{5cm} || P{7cm}||} 
 \hline
 Decomposition types & Examples \\ [0.5ex] 
 \hline\hline
 $[8,8,2,2,2,2]$& $(y-x^4-2x^3-x^2-9x-126)(y-3x^4+3x^3+4x^2-4.4x+0.5)(y+2x^4-3x^3-x^2-2x-4)$\\
 \hline
 $[8,6,4,2,2,2]$& $(y-x^4-3x^3+3x^2-3x+3)(y-2x^4+2x^3-2x^2-2x+1)(y+x^4-x^3+x^2-x+1)$\\
 \hline
 $[8,6,2,2,2,2,2]$& $(y-x^4-3x^3+3x^2-3x+3)(y-2x^4+2x^3-2x^2+2x+1)(y+x^4-x^3+x^2-x+1)$\\
 \hline
 $[8,4,4,4,2,2]$& $(y-x^4)(y-2x^2+1)(y-8x^2+16)$\\
 \hline
 $[8,4,4,2,2,2,2]$& $(y-x^4+0.8x^3+6x^2-13)(y+x^3-x^2-x)(y-8x^2-9x+5)$\\
 \hline
 $[8,4,2,2,2,2,2,2]$& $(y-x^4-3x^3+3x^2-3x+3)(y-2x^4+2x^3-2x^2+0.5x+1)(y+x^4-x^3+x^2-x+1)$\\
 \hline
 $[8,2,2,2,2,2,2,2,2]$ & $(y+15x^4-3x^3-3x^2-3x+3)(y-2x^4-2x^2-2x+1)(y+x^4-x^3-x^2-x+1)$\\
 \hline
 $[6,6,6,2,2,2]$ & $(y-x^4+0.8x^3+6x^2+10)(y+x^3-x^2-x-6)(y-8x^2-9x+16)$\\
 \hline
 $[6,6,4,4,2,2]$ & $(y-x^4+3x+2)(y+0.5x^4-1.5x^3-x^2-6)(y+x^3-8x^2+16)$\\
 \hline
 $[6,6,4,2,2,2,2]$ & $(y-x^4-3x^3+3x^2-3x+3)(y-2x^4-8.5x^3-2x^2-2x+1)(y+x^4-x^3+x^2-x+1)$\\
 \hline
 $[6,6,2,2,2,2,2,2]$ &  $(y-x^4-3x^3+3x^2-3x+3)(y-2x^4-10.175x^3-2x^2-2x+1)(y+x^4-x^3+x^2-x+1)$\\
 \hline
 $[6,4,4,4,2,2,2]$ & $(y-x^4+3x+2)(y+0.5x^4-1.5x^3-x^2-6)(y-8x^2+16)$\\
 \hline
 $[6,4,4,2,2,2,2,2]$ & $(y-x^4-3x^3+3x^2-3x+3)(y-2x^4+2x^3-2x^2+3x+0.95)(y+x^4-x^3+x^2-x+1)$\\
 \hline
 $[6,4,2,2,2,2,2,2,2]$ & $(y-x^4-6.63x)(y-2x^2-0.35x+11)(y-8x^2+16)$\\
 \hline
 $[6,2,2,2,2,2,2,2,2,2]$ & $(y-x^4-2x^3-x^2-9x-4.9)(y-3x^4+3x^3+4x^2-4.4x+0.5)(y+2x^4-3x^3-x^2-2x-4)$\\
 \hline
 $[4,4,4,4,4,4]$ & $(y-x^4-2x^3+7x^2+1x-9)(y-2x^4+x^3-3x^2-4x+7)(y+x^4-2x^3-x^2-3x-1)$\\
 \hline
 $[4,4,4,4,4,2,2]$ & $(y-x^4-2x^3+13x^2-2x-9)(y-2x^4+x^3-3x^2-3x+7)(y+x^4-2x^3-x^2-3x-1)$\\
 \hline
 $[4,4,4,4,2,2,2,2]$ & $(y-x^4)(y-2x^2+5)(y-8x^2+16)$\\
 \hline
 $[4,4,4,2,2,2,2,2,2]$ & $(y-x^4+5.5)(y-2x^2+5)(y-8x^2+16)$\\
 \hline
 $[4,4,2,2,2,2,2,2,2,2]$ & $(y-x^4+4)(y-2x^2+1)(y-8x^2+16)$\\
 \hline
 $[4,2,2,2,2,2,2,2,2,2,2]$ & $(y-x^4+8.5)(y-2x^2+1)(y-8x^2+16)$\\
 \hline
 $[2,2,2,2,2,2,2,2,2,2,2,2]$ & $(y-x^4+6x^2-4x)(y-4x^3+6x^2-1)(y)$\\
 \hline
\end{tabular}

\label{table:11}
\end{table}

\begin{table}
\centering
\caption{Dessin examples for each combinatorial type when $\mathfrak{d}=4$}
\begin{tabular}{|m{3.25cm} |m{3.25cm}| m{3.25cm}|m{3.25cm}|} 
\hline 
 
\centering\arraybackslash\includegraphics[width=26mm]{DessinDelta4_1.png} & \centering\arraybackslash\includegraphics[width=27mm]{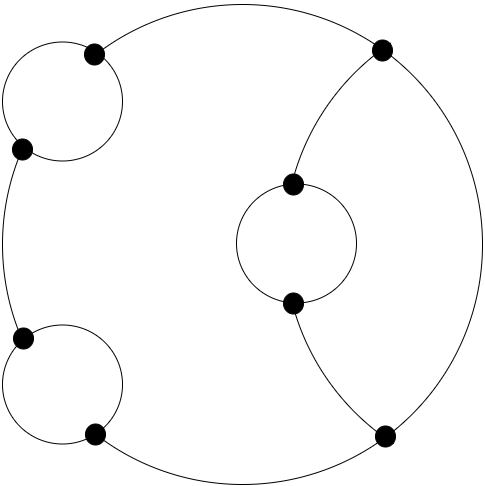} & \centering\arraybackslash\includegraphics[width=27mm]{DessinDelta4_3.png} & 
\centering\arraybackslash\includegraphics[width=27mm]{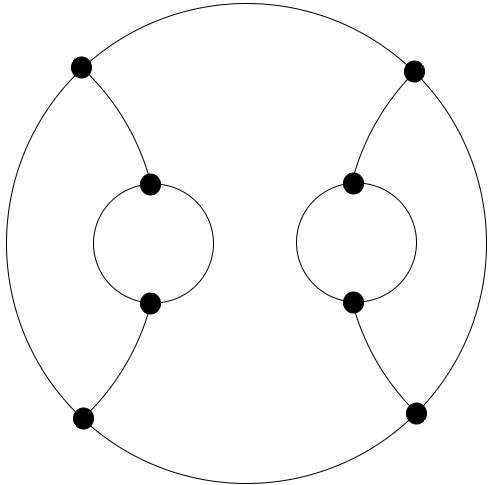} \\ \hline
\centering\arraybackslash$[8,8,2,2,2]$ & \centering\arraybackslash$[8,6,4,2,2,2]$ & \centering\arraybackslash$[8,6,2,2,2,2,2]$ & \centering\arraybackslash$[8,4,4,4,2,2,2]$ \\ \hline
\centering\arraybackslash\includegraphics[width=27mm]{DessinDelta4_5.png} & \centering\arraybackslash\includegraphics[width=27mm]{DessinDelta4_6.png} & 
\centering\arraybackslash\includegraphics[width=27mm]{DessinDelta4_7.png} & \centering\arraybackslash\includegraphics[width=27mm]{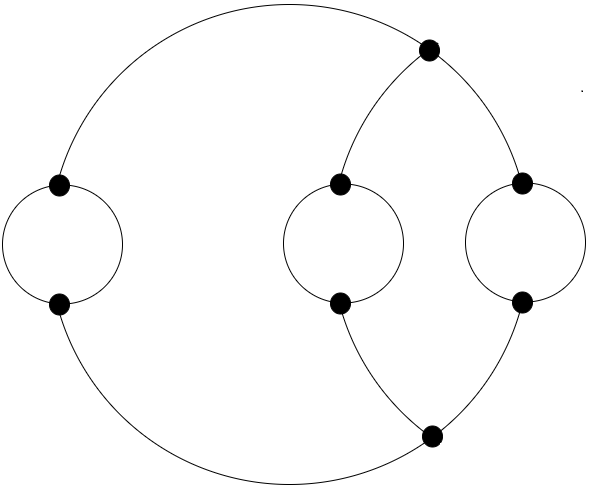} \\ \hline
\centering\arraybackslash$[8,4,4,2,2,2,2]$ & \centering\arraybackslash$[8,4,2,2,2,2,2,2]$ & \centering\arraybackslash$[8,2,2,2,2,2,2,2,2]$ & \centering\arraybackslash$[6,6,6,2,2,2]$ \\ \hline

\centering\arraybackslash\includegraphics[width=27mm]{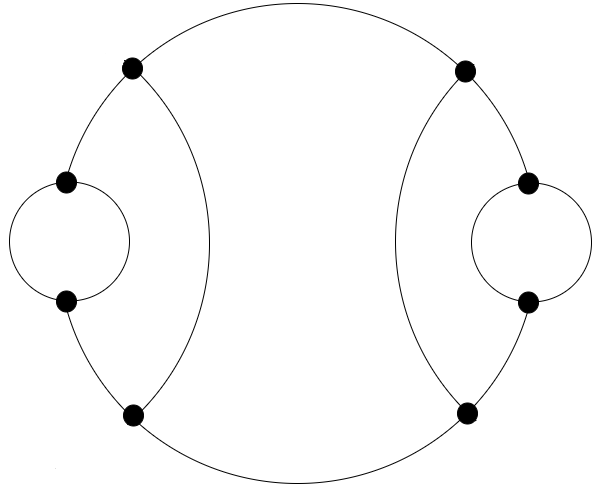} & 
\centering\arraybackslash\includegraphics[width=27mm]{DessinDelta4_10.png} & \centering\arraybackslash\includegraphics[width=26mm]{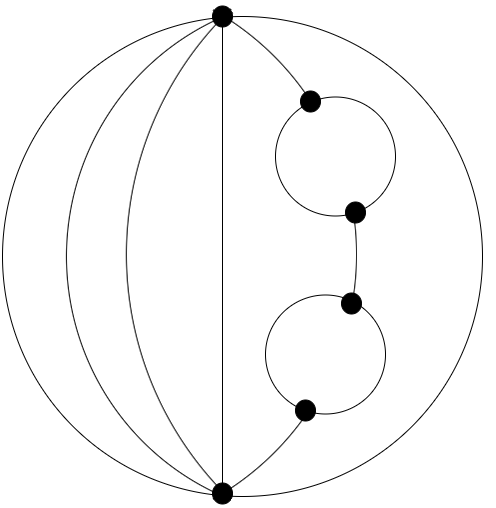} &  
\centering\arraybackslash\includegraphics[width=27mm]{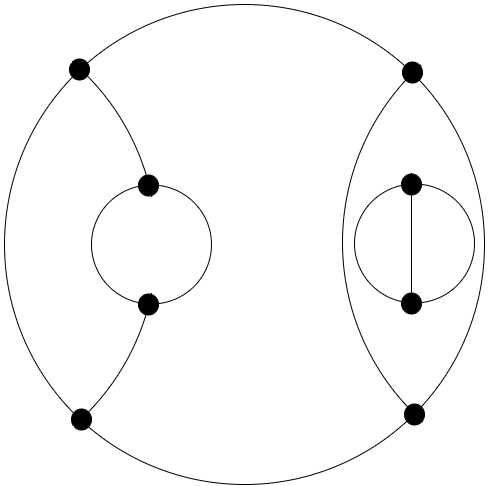} \\  \hline
\centering\arraybackslash$[6,6,4,4,2,2]$ & \centering\arraybackslash$[6,6,4,2,2,2,2]$ & \centering\arraybackslash$[6,6,2,2,2,2,2,2]$ & \centering\arraybackslash$[6,4,4,4,2,2,2]$\\ \hline

\centering\arraybackslash\includegraphics[width=27mm]{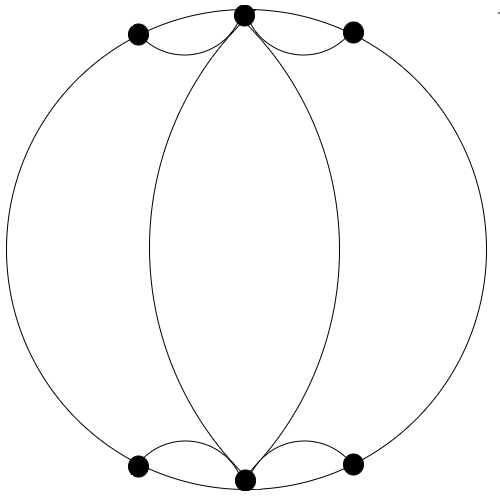} & \centering\arraybackslash\includegraphics[width=26mm]{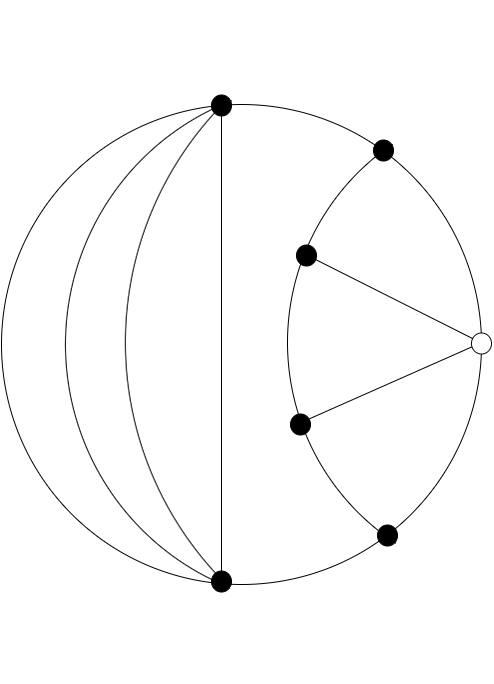} &  
\centering\arraybackslash\includegraphics[width=27mm]{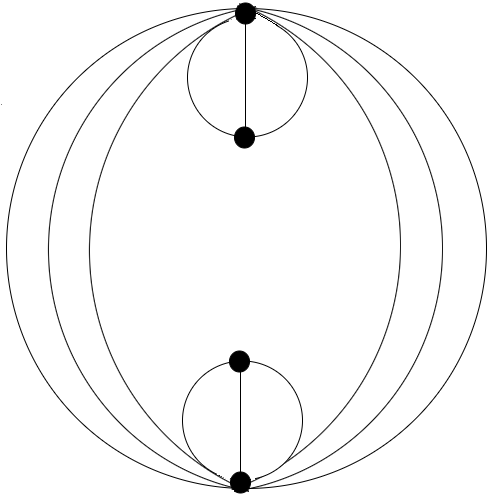} &
\centering\arraybackslash\includegraphics[width=27mm]{DessinDelta4_16.png} \\ \hline
\centering\arraybackslash$[6,4,4,2,2,2,2,2]$ & \centering\arraybackslash$[6,4,2,2,2,2,2,2,2]$ & \centering\arraybackslash$[6,2,\dots,2]$ & \centering\arraybackslash$[4,4,4,4,4,4]$\\ \hline

\centering\arraybackslash\includegraphics[width=26mm]{DessinDelta4_17.png} &  
\centering\arraybackslash\includegraphics[width=27mm]{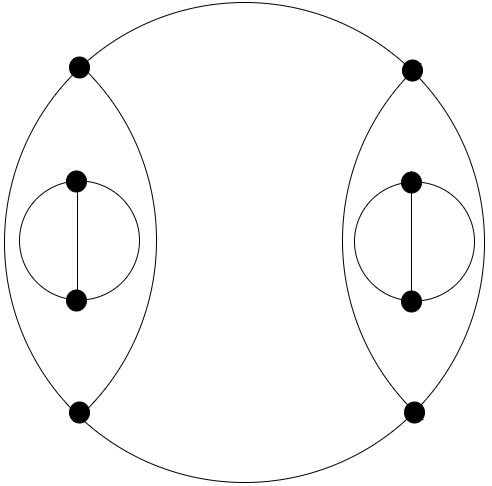} & 
\centering\arraybackslash\includegraphics[width=26mm]{DessinDelta4_19.png} & \centering\arraybackslash\includegraphics[width=27mm]{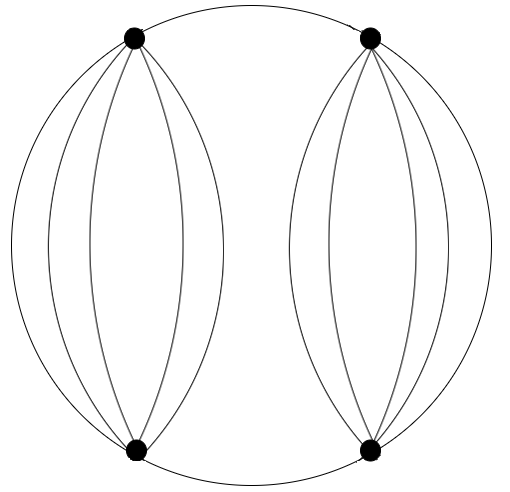} \\ \hline
\centering\arraybackslash$[4,4,4,4,4,2,2]$ & \centering\arraybackslash$[4,4,4,4,2,2,2,2]$ & \centering\arraybackslash$[4,4,4,2,2,2,2,2,2]$& \centering\arraybackslash$[4,4,2,\dots,2]$ \\ \hline
& \centering\arraybackslash\includegraphics[width=27mm]{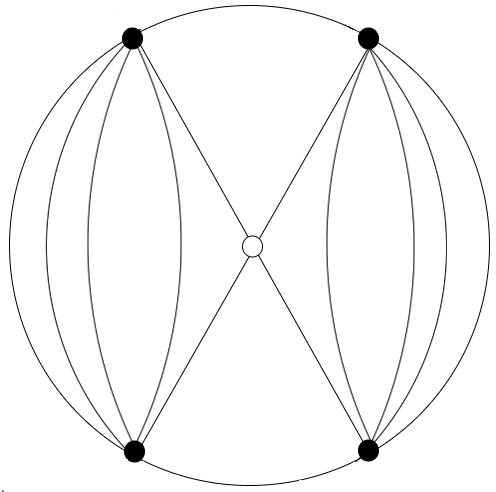} & 
\centering\arraybackslash\includegraphics[width=26mm]{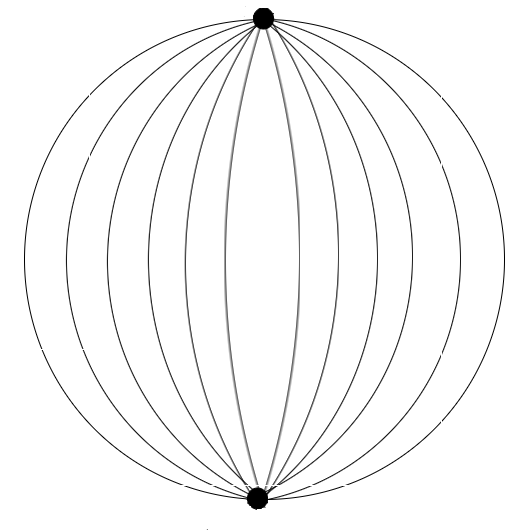}  & \\ \hline

& \centering\arraybackslash$[4,2,\dots,2]$ & \centering\arraybackslash$[2,\dots,2]$ &  \\
\hline

\end{tabular}

\label{table:2}
\end{table}
\section{Deformations of Dessins d'Enfants}

In this section, we present some results on the deformation space of dessins. We list all possible moves that a dessin of a CRTC can have during deformations, define when a dessin is simple and study the role of simple dessins in the deformation space.

Let $C$ be a completely reducible trigonal curve in the form $(y-y_1)(y-y_2)(y-y_3)=0$ where $y_i\in \mathbb{C}[x]$. When any $y_i$ is transformed to another polynomial by changing its coefficients continuously, $\Gamma_C$ also deforms to another dessin d'enfant. 

\begin{definition}\em
A \textit{move} is a transition from one dessin class to another up to the graph isomorphism.
\end{definition}

Here is the proposition about these moves.

\begin{proposition}\em
There are four types of moves in $\Gamma_C$ as follows:
\begin{enumerate}
\item Monochrome modification
\item Merging $\bullet$- vertices
\item Merging $\circ$- vertices
\item Merging $\bullet$- and monochrome vertices
\end{enumerate}

\begin{figure}[h!]
\centering
\includegraphics[width=120mm]{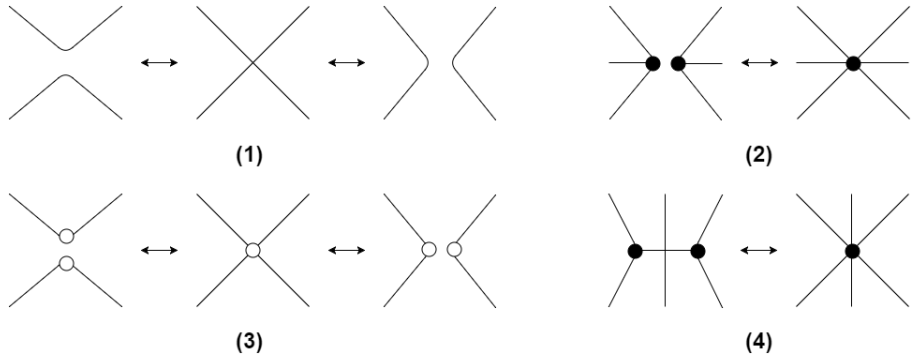}
\caption{Four move types}

\end{figure}

\end{proposition}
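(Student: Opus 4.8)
The plan is to exploit the factorization $j_C=\phi_2\circ\phi_1$ from Remark \ref{remark::rem1}. A deformation changes only the coefficients of the $y_i$, hence only $\phi_1$, while $\phi_2$ — and therefore the cross-ratio graph $G:=\phi_2^{-1}([0,1])$ of Figure \ref{crossratio} — stays fixed. So throughout a deformation $\Gamma_C=\phi_1^{-1}(G)$, and a continuous change of the $y_i$ yields a continuous family $\phi_{1,t}$ of rational maps. The first step is to observe that the graph-isomorphism type of the preimage of a fixed graph under a branched covering is controlled by the position of the branch points — the critical values of $\phi_{1,t}$ — relative to the strata of $G$: its $\bullet$-, $\circ$- and $\times$-vertices, its open edges, and its complementary regions. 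This type is locally constant in $t$ and can change only at a parameter $t_0$ at which this stratified position degenerates, i.e. a critical value of $\phi_{1,t}$ crosses a vertex or an open edge of $G$, or two critical values collide. Each such event is localized near a single point $b_0\in P^1$, and the resulting move is read off from the local normal form of $\phi_{1,t}$ at $b_0$ together with the local picture of $G$ at $\lambda_0:=\phi_{1,t_0}(b_0)$.

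The second step is to run through the possibilities for $\lambda_0$, using Proposition \ref{proposition::valuesofj} for the local structure of $G$. If $\lambda_0$ lies in the interior of an edge of $G$ (so $\phi_2(\lambda_0)\in(0,1)$), a critical value sweeping across that edge momentarily creates a ramification point over $(0,1)$, i.e. a monochrome vertex, and reconnects the two local strands of $\Gamma_C$: this is the monochrome modification. If $\lambda_0$ is a $\bullet$-vertex ($\lambda_0=e^{\pm i\pi/3}$) reached by two unramified preimages coming together, two $\bullet$-vertices of $\Gamma_C$ fuse into one of valence raised by $3$: merging $\bullet$-vertices. The analogous collision over a $\circ$-vertex ($\lambda_0\in\{-1,\tfrac12,2\}$) gives merging $\circ$-vertices. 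Finally, when a critical value carrying a monochrome vertex runs along an edge into a $\bullet$-vertex, the monochrome vertex is absorbed by the $\bullet$-vertex: merging $\bullet$- and monochrome vertices. These four local models are exactly the four listed moves.

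The third and hardest step is completeness: showing the list is exhaustive and has no redundant or missing type. A $\bullet$- and a $\circ$-vertex can never merge, because their images $e^{\pm i\pi/3}$ and $\{-1,\tfrac12,2\}$ are distinct points of $G$, so a single point $b_0$ cannot simultaneously be a preimage of both. Events at $\times$-vertices are excluded separately: these sit over $\lambda\in\{0,1,\infty\}$, i.e. over the loci $y_i=y_j$ where two components meet, which — no triple intersections being assumed — we track as the fixed combinatorial data of singular fibres rather than as participants in the $[0,1]$-preimage; a collision of components is a degeneration of that data, not a move of $\Gamma_C$. The genuinely delicate case is a monochrome vertex running into a $\circ$-vertex: here one must check, via the ramification indices $3,2,2$ over $0,1,\infty$, that such a collision yields no move distinct from the four above (it should be assimilated into a monochrome modification at the bivalent $\circ$-structure), and symmetrically that an absorbing collision produces a genuinely new type only at the trivalent $\bullet$-vertices. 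Verifying this index bookkeeping — and confirming that each of the four local models is actually realized by some deformation of a CRTC — is where the real work lies.
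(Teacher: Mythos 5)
Your framework is essentially the paper's own: the paper also reads moves off degenerations of the covering over $[0,1]$ (phrased as $j_C-r$ acquiring a multiple root for $r\in(0,1)$, $r=0$, or $r=1$, plus the composite path for the fourth move), and your identification of the four local models is correct. But your third step is not a proof: you explicitly defer the one case on which the whole statement hinges, namely whether a monochrome vertex sliding into the $\circ$-side produces a fifth move type, ``merging $\circ$- and monochrome vertices.'' Saying that ``one must check, via the ramification indices $3,2,2$,'' and that verifying this bookkeeping ``is where the real work lies'' names the gap without closing it; since completeness of the list is exactly the content of the proposition, this is a genuine missing step, not a routine verification left to the reader.

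The paper closes precisely this case with a coloring argument you do not supply. The two branches of a monochrome crossing that run toward $j=0$ map under $\phi_1$ into the same edge of the cross-ratio graph, so they terminate at $\bullet$-vertices of the same color (both are preimages of the same vertex $e^{\pm i\pi/3}$). Hence a deformation that kept the monochrome vertex while pushing its critical value $r\to 1$ would produce a $\circ$-vertex lying between two $\bullet$-vertices of the same type; but in a dessin of a CRTC every $\circ$-vertex is adjacent to $\bullet$-vertices of both colors, because each $\circ$-vertex of the cross-ratio graph $K_{2,3}$ joins the two distinct $\bullet$-vertices. This contradiction is what rules out a fifth move. Note also that your guessed resolution points the wrong way: you suggest the collision ``should be assimilated into a monochrome modification'' (type 1), whereas the degeneration at the $\circ$-side, insofar as it is approached at all, lands on the higher-valency $\circ$-vertex configuration of type 3; the paper's point is that it is not an admissible separate move in the first place. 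Finally, your claim that all four local models are actually realized by deformations of CRTCs is likewise asserted but never checked, whereas the paper's algebraic characterizations (multiple roots of $j_C-r$ at the appropriate $r$) are what supply that realizability.
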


\begin{proof}
When $j_C-r=0$ is not irreducible and has a root with multiplicity bigger than 1 for $r\in (0,1)$, we have the monochrome modification. Similarly, when $j_C=0$ and $j_C-1=0$ are not irreducible and have roots with multiplicity bigger than 1, we get the ``merging $\bullet$-" and ``merging $\circ$-" vertices moves respectively. 

Lastly, when there exist a monochrome modification, and the deformation keeps the monochrome vertex and makes $r$ converge to $0$, we get the ``merging $\bullet$- and monochrome" vertices. Note that the deformation cannot keep the monochrome vertex and make $r$ converge to $1$ at the same time since there will be a $\circ$- vertex between two same types of $\bullet$- vertices which can never happen. 

\end{proof}

An example to a merging $\circ$- vertices can be found in Figure \ref{FigElemntary}. The dessin in Figure \ref{FigElemntary}-\textbf{a} is deforming into the dessin in Figure \ref{FigElemntary}-\textbf{c} by merging two $\circ$- vertices as in Figure \ref{FigElemntary}-\textbf{b}.

\begin{figure}[h!]
\centering
\includegraphics[width=100mm]{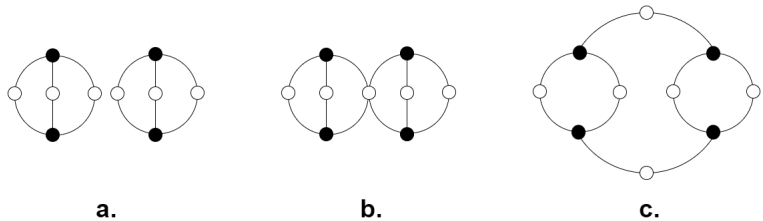}
\caption{An exampe to the merging $\circ$- vertices move}
\label{FigElemntary}
\end{figure}

During the deformations, some dessin d'enfant classes occur more frequently. We will pay more attention to these dessins in the rest of this section. Here is the formal definition of these dessins:

\begin{definition}\em
A dessin d'enfant is called \textit{simple} if it has following properties:
\begin{itemize}
\item Its $\bullet$- vertices are of degree 3,
\item Its $\circ$- vertices are of degree 2,
\item It has no monochrome vertex.
\end{itemize}
In other words, a simple dessin d'enfant is an unbranched covering of the cross-ratio graph. A dessin d'enfant which is not simple is called \textit{multiple}.
\end{definition}

\begin{example}\label{example:simpledessin}\em
For CRTC's that have maximal degree 2, there are only two different simple dessins d'enfants as in Figure \ref{simple2}:

\begin{figure}[ht!]
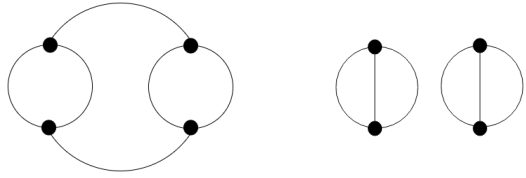


\begin{center}
\begin{tabular}{m{4cm} m{4cm}} 

\includegraphics[width=30mm]{DessinDelta2_2.png} & \includegraphics[width=25mm]{DessinDelta2_1.png} \\ 
\end{tabular}
\end{center}
\caption{The simple dessins d'enfants for maximal degree 2}
\label{simple2}
\end{figure}
\end{example}

As it is mentioned before, simple dessins are observed more frequently during deformations. Furthermore, we also show that they form open subsets in the dessin space $\mathfrak{D}$:

\begin{theorem}\em \label{thm:simple}
Simple dessins d'enfants form open subsets in $\mathfrak{D}$.
\end{theorem}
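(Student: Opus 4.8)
The plan is to translate simpleness into a statement about the critical (branch) values of $\phi_1$ and then observe that avoiding a fixed closed set is an open condition. Recall $j_C=\phi_2\circ\phi_1$ (Remark~\ref{remark::rem1}), where $\phi_2$ is the fixed degree-$6$ map branched only over $\{0,1,\infty\}$ and $\phi_1(b)=(y_1(b),y_2(b);y_3(b),\infty)=(y_1(b)-y_3(b))/(y_2(b)-y_3(b))$. Put $G:=\phi_2^{-1}([0,1])$ for the cross-ratio graph, a fixed compact $1$-complex in $P^1$ with $\bullet$-vertices $e^{\pm i\pi/3}$, $\circ$-vertices $\{-1,\tfrac12,2\}$, and open edges mapping onto $(0,1)$.

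First I would establish the equivalence
$$\Gamma_C \text{ is simple} \iff \phi_1 \text{ is unramified over } G \iff \text{no critical value of } \phi_1 \text{ lies on } G.$$
The three conditions in the definition of simple correspond exactly to the three strata of $G$: a $\bullet$-vertex of $\Gamma_C$ over $b$ has valence $3e_b$ and a $\circ$-vertex has valence $2e_b$, where $e_b$ is the local degree of $\phi_1$ at $b$, so these valences equal $3$ and $2$ precisely when $\phi_1$ is unramified over $e^{\pm i\pi/3}$ and over $\{-1,\tfrac12,2\}$; and since $\phi_2$ is unramified over $(0,1)$, a monochrome vertex occurs exactly when $\phi_1$ has a critical point mapping into an open edge of $G$. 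Ramification of $\phi_1$ over $0,1,\infty$ only moves the $\times$-vertices and is irrelevant, as those points lie off $G$.

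Next I would take the deformation space to be a stratum of coefficient tuples $(y_1,y_2,y_3)$ of fixed degrees, so that $d:=\deg\phi_1$ is locally constant. Because we work only with curves having no triple intersection, $P:=y_1-y_3$ and $Q:=y_2-y_3$ have no common root (a common root forces $y_1=y_2=y_3$ there), so $\phi_1$ has no base points and its critical points are exactly the zeros in $P^1$ of the Wronskian $W:=P'Q-PQ'$ (homogenized to include $\infty$), totalling $2d-2$ by Riemann--Hurwitz. The coefficients of $W$, and the value of $\phi_1$ at its zeros, are polynomial in the coefficients of $C$, hence the branch divisor of $\phi_1$ varies continuously in $\mathrm{Sym}^{2d-2}(P^1)$ as $C$ varies. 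Since $G$ is closed, the set of divisors whose support misses $G$ is open in $\mathrm{Sym}^{2d-2}(P^1)$; pulling back shows the simple locus is open in the deformation space, and passing to the quotient shows simple dessins form an open subset of $\mathfrak{D}$.

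The main obstacle is the continuity of the critical points, which needs care precisely where $\deg\phi_1$ could change: if a leading coefficient degenerates in a limit, critical points can escape toward $\infty$ and critical values can appear or disappear there. The no-triple-intersection hypothesis removes the other source of trouble, namely base points coming from common factors of $P$ and $Q$, so the remaining work is to justify continuity of the branch divisor in $\mathrm{Sym}^{2d-2}(P^1)$ on a fixed-degree stratum. I would make this rigorous via a compactness argument on the target: if $C_0$ were simple yet a limit $C_k\to C_0$ of non-simple curves, each $C_k$ would carry a critical point $b_k$ of $\phi_1^{(k)}$ with $\phi_1^{(k)}(b_k)\in G$, and any subsequential limit $b_\infty\in P^1$ (using compactness of $P^1$) would satisfy $W_{C_0}(b_\infty)=0$ and $\phi_1^{(0)}(b_\infty)\in G$ since $G$ is closed, contradicting the simpleness of $C_0$ through the equivalence above.
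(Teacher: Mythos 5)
Your proposal is correct and takes essentially the same route as the paper: both reduce simpleness to the condition that the critical values of $\phi_1$ (equivalently, of $j_C$) avoid the fixed closed set $G=\phi_2^{-1}([0,1])$ (equivalently, $[0,1]$), deduce that the non-simple curves form a closed locus in the fixed-degree parameter space of curves, and transfer openness of its complement to $\mathfrak{D}$ via the curve-to-dessin map. The only difference is one of rigor: your Wronskian continuity, compactness argument, and quotient-topology step spell out what the paper simply asserts, namely that $\alpha^{-1}(\bigtriangleup)$ has codimension one and that $\omega$ is an open map.
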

\begin{proof}
Let $\mathfrak{T}_{d_1,d_2,d_3}$ be the space of completely reducible trigonal curves whose components have degree $d_1,d_2$ and $d_3$. By definition,  $\mathfrak{T}_{d_1,d_2,d_3}\simeq P_1^{d_1+1} \times P_1^{d_2+1} \times P_1^{d_3+1}$. Let $d:=\text{maximum}(d_1,d_2,d_3)$ and $\mathfrak{D}_d$ be the space of all dessins d'enfants for the maximal degree $d$. Let $\omega: \mathfrak{T}_{d_1,d_2,d_3} \rightarrow \mathfrak{D}_d$ be a map that sends each curve to its dessins d'enfants. Let $\mathfrak{J}_{d_1,d_2,d_3}$ be the set of critical values of the $j$-invariant of completely reducible trigonal curves whose components have degree $d_1,d_2$ and $d_3$. By definition, $\mathfrak{J}_{d_1,d_2,d_3}\simeq P_1^d$. Let $\alpha: \mathfrak{T}_{d_1,d_2,d_3} \rightarrow \mathfrak{J}_{d_1,d_2,d_3}$ be another map that sends a trigonal curve to the critical values of its $j$-invariant. Let $\bigtriangleup=\bigcup\limits_{i=1}^d \{c_i | j(c_i) \in [0,1]\}$ where $(c_1,...,c_d) \in \mathfrak{J}_{d_1,d_2,d_3}$ i.e. $\bigtriangleup \subset \mathfrak{J}_{d_1,d_2,d_3}$ is the set of the critical points whose image under the $j$-invariant lies on the interval $[0,1]$. $\alpha^{-1}(\bigtriangleup)$ is the locus of the trigonal curves with multiple dessins d'enfants and codim$(\alpha^{-1}(\bigtriangleup))=1$ in $\mathfrak{T}_{d_1,d_2,d_3}$. Hence, every open set in $\mathfrak{T}_{d_1,d_2,d_3}$ contains a trigonal curve with a simple dessin d'enfant. Since $\omega$ is an open map, simple dessins d'enfants form open subsets in $\mathfrak{D}_d$.
\end{proof}
\begin{example}\em
Let $\mathfrak{T}=\{C:=(y-x^2-a)(y-2x-1)(y+x-1)| a \in P_1\}$ be a one parameter subset of  $\mathfrak{T}_{2,1,1}$. The cross-ratio function is $$cr(x)=\frac{(x^2+a)-(-x+1)}{(2x+1)-(-x+1)} = \frac{x^2+x+a-1}{3x}.$$ The critical values are $x=\pm (a-1)^{1/2}$. Then $\bigtriangleup=\{(a-1)^{1/2}|j(a-1)^{1/2}\in [0,1]\} \cup \{-(a-1)^{1/2}|j(a-1)^{1/2}\in [0,1]\}$ and the picture of $\bigtriangleup$ can be found in Figure \ref{cardio}. The corresponding simple dessins can be seen in Figure \ref{coloredcardio}.

\begin{figure}[ht!]
\centering
\includegraphics[width=110mm]{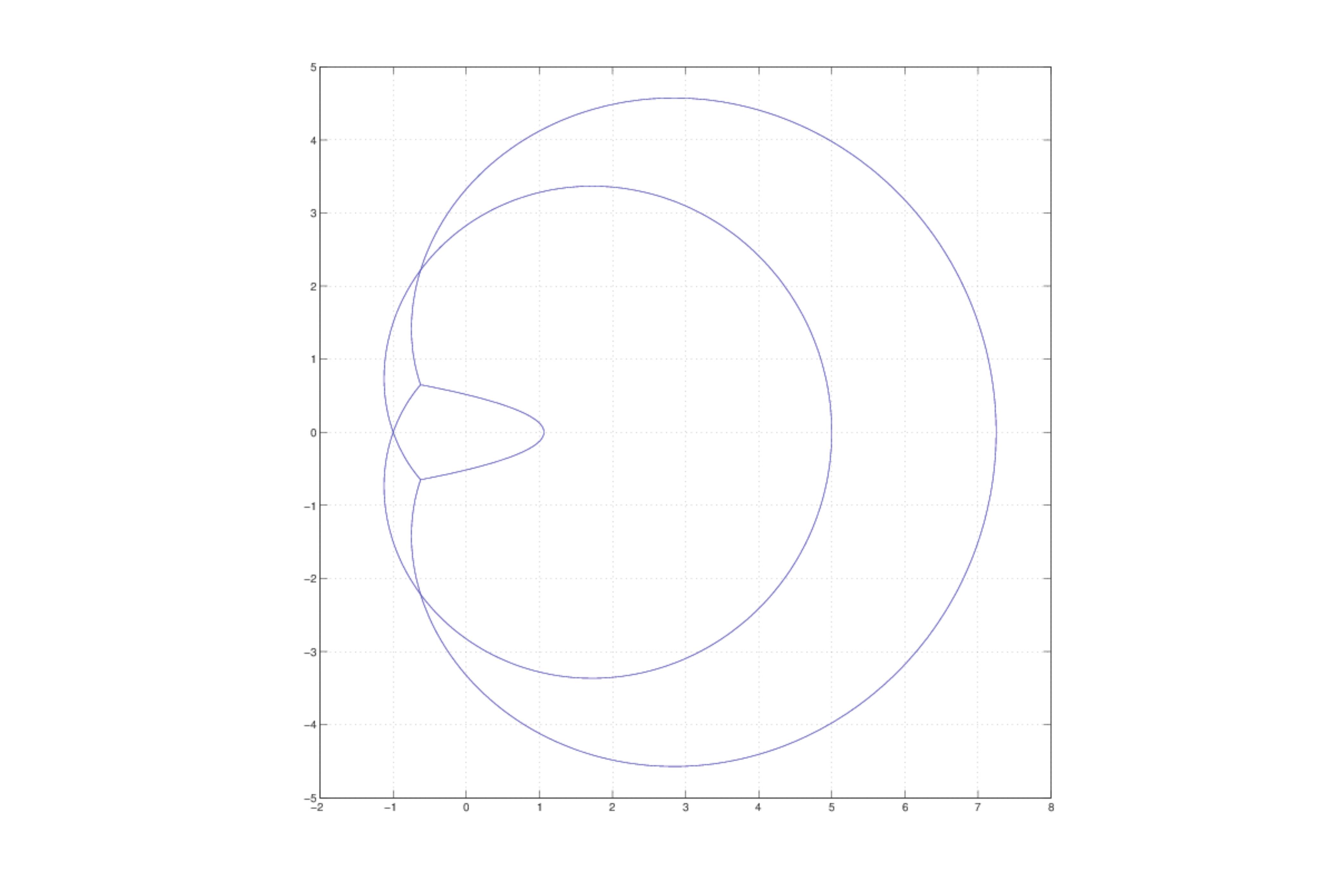}
\caption{The discriminant locus in the one parameter family of trigonal curves $\{C:=(y-x^2-a)(y-2x-1)(y+x-1)| a \in P^1\}$ }
\label{cardio}
\end{figure}

\begin{figure}[ht!]
\centering
\includegraphics[width=120mm]{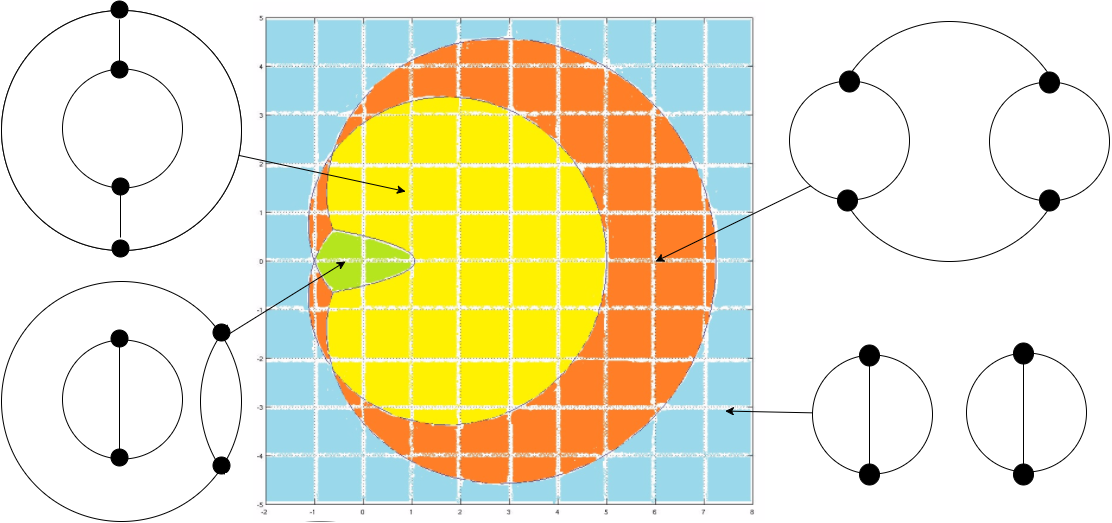}
\caption{The simple dessins in the one parameter family of trigonal curves $\{C:=(y-x^2-a)(y-2x-1)(y+x-1)| a \in P^1\}$ }
\label{coloredcardio}
\end{figure}
\end{example}

\begin{example}\em
Let $\mathfrak{T}=\{C:=(y-x^3-a)(y-2x-1)(y+x-1)| a \in P^1\}$ be a one parameter subset of  $\mathfrak{T}_{3,1,1}$. The discriminant locus of $\mathfrak{T}$ is plotted in Figure \ref{cardio3}
\begin{figure}[ht!]
\centering
\includegraphics[width=120mm]{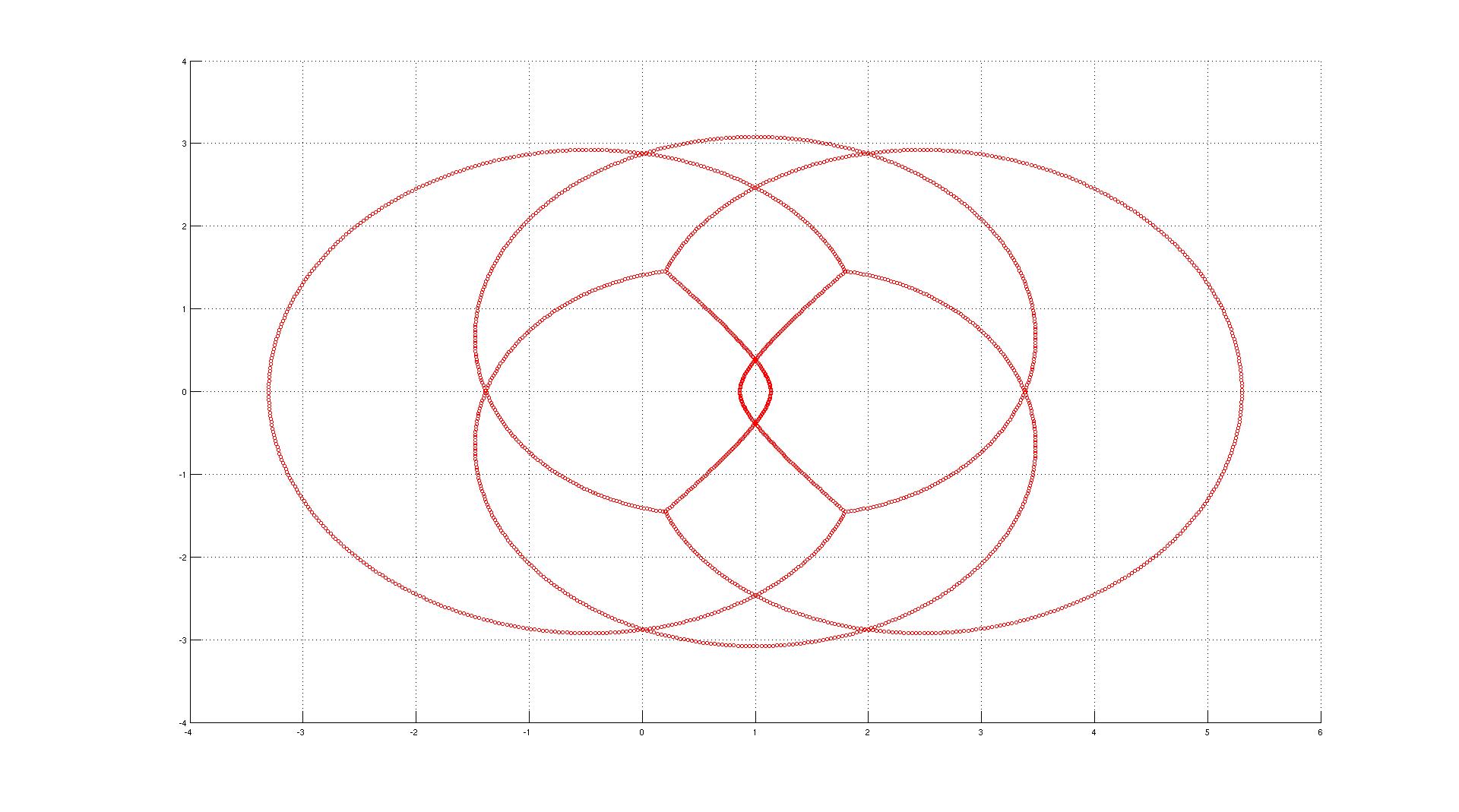}
\caption{The discriminant locus in the one parameter family of trigonal curves $\{C:=(y-x^3-a)(y-2x-1)(y+x-1)| a \in P^1\}$ }
\label{cardio3}
\end{figure}
\end{example}

One can observe from the previous examples that multiple dessins can be obtained by deforming simple dessins:

\begin{corollary}\label{deformdessin}\em
Every dessin d'enfant is the limit of simple dessins d'enfants in the deformation space.
\end{corollary}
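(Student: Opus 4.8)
The plan is to obtain Corollary~\ref{deformdessin} as an immediate consequence of the density statement established inside the proof of Theorem~\ref{thm:simple}. There, for each triple of degrees $(d_1,d_2,d_3)$ we introduced the assignment $\omega:\mathfrak{T}_{d_1,d_2,d_3}\to\mathfrak{D}_d$ sending a curve to its dessin, and we identified the locus $\alpha^{-1}(\bigtriangleup)\subset\mathfrak{T}_{d_1,d_2,d_3}$ of curves carrying a \emph{multiple} dessin as a closed subset of codimension one. Since $\mathfrak{T}_{d_1,d_2,d_3}\simeq P_1^{d_1+1}\times P_1^{d_2+1}\times P_1^{d_3+1}$ is irreducible, the complement $U:=\mathfrak{T}_{d_1,d_2,d_3}\setminus\alpha^{-1}(\bigtriangleup)$, that is, the set of curves whose dessin is simple, is open and dense.

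First I would fix an arbitrary dessin $\Gamma\in\mathfrak{D}_d$ and write $\Gamma=\omega(C)$ for some CRTC $C$; this is legitimate because, by definition, every element of $\mathfrak{D}_d$ is the dessin of a completely reducible trigonal curve of maximal degree $d$. If $\Gamma$ is already simple there is nothing to prove, so assume $C\in\alpha^{-1}(\bigtriangleup)$. Because $U$ is dense, I would then choose a path (or sequence) of curves $C_t\to C$ with $C_t\in U$ for all $t$, obtained concretely by perturbing the coefficients of $y_1,y_2,y_3$ so as to push $C$ off the codimension-one locus $\alpha^{-1}(\bigtriangleup)$. Each $\Gamma_t:=\omega(C_t)$ is simple, and the continuity of $\omega$ — which holds because $j_C=\phi_2\circ\phi_1$ is a rational map whose coefficients depend algebraically, hence continuously, on those of the $y_i$, so that $\Gamma_C=j_C^{-1}([0,1])$ varies continuously with $C$ — yields $\Gamma_t\to\omega(C)=\Gamma$. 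This exhibits $\Gamma$ as a limit of simple dessins.

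The step I expect to demand the most care is verifying that the graph-theoretic limit of the $\Gamma_t$ is genuinely $\Gamma_C$ rather than some neighbouring degeneration. As $C_t\to C$, vertices of the same colour, or a $\bullet$-vertex and a monochrome vertex, may collide — this is precisely the mechanism of the four elementary moves described above, by which a simple dessin degenerates to a multiple one. One must therefore check that approaching $C$ from within $U$ reproduces exactly the ramification pattern of $j_C$ over $\{0,1,\infty\}$ together with its monochrome ramification over $(0,1)$. This is guaranteed because all such collisions occur along $\alpha^{-1}(\bigtriangleup)$, so that along any approach through $U$ the branch data limit onto those of $C$, and hence $\omega(C_t)$ limits onto $\Gamma_C=\Gamma$.
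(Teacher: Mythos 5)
Your proposal is correct and follows essentially the same route as the paper: both deduce the corollary from Theorem~\ref{thm:simple}, namely that the locus of curves carrying multiple dessins has codimension one, so simple dessins are dense and every multiple dessin is a limit of simple ones. Your version merely makes explicit the lift to the curve space $\mathfrak{T}_{d_1,d_2,d_3}$, the approximating family $C_t\to C$, and the continuity of $\omega$ --- which is an unpacking of the paper's terse ``boundary of the open subsets'' remark rather than a different argument.
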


\begin{proof}
From Theorem \ref{thm:simple}, multiple dessins d'enfants have codimension 1 in $\mathfrak{D}$ and simple dessins d'enfants form an open subset in $\mathfrak{D}$. In other words, multiple dessins d'enfants form the boundary of the open subsets in $\mathfrak{D}$ hence for a given multiple dessin d'enfant, one can find a series of elementary moves in an open subset that a simple dessin d'enfant deforms to it. 
\end{proof}

The number of simple dessins for a fixed maximal degree is important to both interpret the discriminant locus and also classify the dessins up to the graph isomorphism. Here is an approximation for the number for a fixed maximal degree:

\begin{theorem}\em
Let $u_n$ be the number of simple dessins d'enfants of trigonal curves with the maximal degree $n$. Then $$u_n\sim \frac{(3n)!}{6^{2n}(n!)^2} \exp(2 \minus \frac{2}{9n}+O(n^{\minus 2})).$$

\end{theorem}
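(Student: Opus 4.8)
The plan is to translate the enumeration of simple dessins into the enumeration of cubic bipartite (multi)graphs, and then to apply the configuration (pairing) model in the spirit of the Bender--Canfield--Wormald method for counting regular graphs.

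First I would fix the combinatorial model. By definition a simple dessin has only trivalent $\bullet$-vertices and bivalent $\circ$-vertices and no monochrome vertices; by Proposition \ref{Prop6n} there are then exactly $2n$ of the former and $3n$ of the latter, with $6n$ edges. Suppressing the bivalent $\circ$-vertices (exactly as we already do when drawing dessins) turns $\Gamma_C$ into a $3$-regular bipartite graph on the $2n$ $\bullet$-vertices, split by $\bullet$-colour (Proposition \ref{RBG}) into two classes of size $n$. I would first prove that this assignment is a bijection between isomorphism classes of simple dessins of maximal degree $n$ and isomorphism classes of connected cubic bipartite multigraphs on $n+n$ vertices, with the $\circ$-colours recording a proper $3$-edge-colouring and the rotation system forced by it. Since disconnected covers and graphs with non-trivial automorphisms are negligible in the limit, $u_n$ is asymptotically the number of such multigraphs up to isomorphism. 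I would then count by the configuration model: replace each $\bullet$-vertex by three half-edges, so that the multigraph is recovered from a bijection between the $3n$ half-edges on one side and the $3n$ on the other. There are $(3n)!$ such pairings; dividing by the $(3!)^{n}\cdot(3!)^{n}=6^{2n}$ reorderings of half-edges at the vertices and by the $(n!)^2$ relabellings of the two vertex classes produces the leading factor $\frac{(3n)!}{6^{2n}(n!)^2}$, which is exactly the count one obtains if every pairing yielded a simple, asymmetric, connected graph.

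The correction factor $\exp\!\big(2-\tfrac{2}{9n}+O(n^{-2})\big)$ is where the real work lies. Following the standard moment analysis, I would show that in a uniformly random pairing the numbers of double and triple edges converge to independent Poisson variables, and that passing from configurations to honest multigraphs multiplies the naive count by $\mathbb{E}\big[\prod_e \mu_e!\big]$, where $\mu_e$ is the multiplicity of an edge. For $3$-regular bipartite graphs the mean number of double edges tends to $\tfrac{(d-1)^2}{2}=2$ and each such edge contributes a factor $2$, so this expectation tends to $\exp\!\big(\tfrac{(d-1)^2}{2}\big)=e^{2}$, which is precisely the leading constant; connectivity and automorphism defects affect only lower-order terms. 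Carrying the same expansion to the next order — incorporating the $O(1/n)$ correction to the double-edge mean together with the first triple-edge contribution — is what yields the $-\tfrac{2}{9n}$ term and the $O(n^{-2})$ remainder.

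The main obstacle will be this last step: obtaining the coefficient $-\tfrac29$ exactly requires a careful second-order short-cycle computation with explicit control of the error terms, rather than just the leading Poisson heuristic. A secondary but essential point is verifying rigorously that the bijection of the first paragraph does not smuggle in extra ribbon or colour data that would push the count away from the plain cubic-bipartite-multigraph model (the rotation system being determined by the colouring is the crux here), and that the automorphism and connectivity corrections are genuinely $1+o(n^{-1})$, so that they do not contaminate the $1/n$ coefficient. Once these are in place, multiplying the configuration count $\frac{(3n)!}{6^{2n}(n!)^2}$ by the expanded correction factor gives the stated asymptotic for $u_n$.
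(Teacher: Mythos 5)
Your combinatorial translation is the same as the paper's: suppressing the bivalent $\circ$-vertices and splitting the $\bullet$-vertices by colour turns a simple dessin into a cubic bipartite multigraph on $n+n$ vertices, which the paper records as an $n\times n$ nonnegative integer matrix with all row and column sums equal to $3$, taken up to row and column permutations. The real difference is in the second half: the paper performs no asymptotic analysis at all, it simply quotes the formula from \cite{greenhill2013asymptotic}, whereas you propose to re-derive it via the configuration model. That part of your plan is sound and is essentially how such results are proved: the labelled count equals $\frac{(3n)!}{6^{2n}}\,\mathbb{E}\bigl[\prod_e \mu_e!\bigr]$, the expectation tends to $e^{2}$, and a careful second-order computation with falling-factorial moments (the parallel-pair terms contribute $-\frac{2}{3n}$ in total once the $O(1/n)$ corrections to their joint moments are included, and the triple edges contribute $+\frac{4}{9n}$) does yield $\exp\bigl(2-\frac{2}{9n}+O(n^{-2})\bigr)$.

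The genuine gap is your connectivity hypothesis, and it fails at exactly the order you are trying to control. A simple dessin is an unbranched cover of the cross-ratio graph, and such covers need not be connected: the paper's count includes disconnected dessins --- for $n=2$ the matrix $\operatorname{diag}(3,3)$, listed as $M_2$ in the paper's own proof, corresponds to two disjoint copies of the degree-one dessin, and it is one of the two simple dessins making up $u_2=2$. So the claimed bijection with \emph{connected} cubic bipartite multigraphs is false. Worse, the discrepancy is not asymptotically negligible at precision $1/n$: in a cubic multigraph a triple edge automatically forms an entire connected component (both endpoints have their degree exhausted), and the expected number of triple edges is $\Theta(1/n)$ (about $\frac{2}{9n}$ in the pairing model, and $\sim\frac{4}{3n}$ under the uniform multigraph measure). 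Hence the connected count differs from the full count by a factor $1-\Theta(1/n)$, and the verification you postpone --- that connectivity corrections are $1+o(n^{-1})$ --- cannot succeed; it would corrupt precisely the $-\frac{2}{9n}$ coefficient. The fix is simply to drop connectivity and count all multigraphs, as the paper does.

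A second defect of the same kind sits in the passage from labelled multigraphs to isomorphism classes by dividing by $(n!)^2$. By Burnside's lemma the correction comes from matrices fixed by nontrivial permutation pairs, and the dominant contributions are matrices with two equal rows (two same-colour $\bullet$-vertices with identical neighbourhoods), which occur with probability $\sim\frac{16}{9n^{3}}$ per pair; summing over the $\binom{n}{2}$ row and $\binom{n}{2}$ column transpositions gives a relative correction of order $\frac{16}{9n}$. So the claim that graphs with non-trivial automorphisms are negligible is also false at order $1/n$. To be fair, this point is invisible in the paper as well, since the quoted asymptotic appears to be the labelled count divided by $(n!)^{2}$; but since the entire added value of your write-up over the paper's one-line citation is a rigorous derivation of the coefficient $-\frac{2}{9}$, you must either carry these Burnside terms explicitly or weaken your conclusion to the leading-order statement $u_n\sim\frac{(3n)!}{6^{2n}(n!)^2}\,e^{2}$.
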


\begin{proof}
Since the cases how $\bullet$-vertices connect each other decide the graph type in the simple dessins, it is enough to find out the number of the possible arrangements of connecting $\bullet$-vertices. From Proposition \ref{proposition::valuesofj}, a dessin can have two different $\bullet$-vertices, cyan and yellow. $\Gamma_C$ has $n$ cyan $\bullet$-vertices and $n$ yellow $\bullet$-vertices. We also know that two same type $\bullet$-vertices cannot share an edge and a $\bullet$-vertex has degree 3 by definition.

Under these conditions, one can construct an $n\times n$ adjacency matrix $M$ to find out a simple dessin as follows: Let $c_1,\dots, c_n$ be the cyan $\bullet$-vertices and $y_1,\dots,y_n$ be the yellow $\bullet$-vertices and $m_{i,j}$ be the number of edges between $c_i$ and $y_j$ for some $i,j \in \{1,\dots, n\}$. Hence, $M$ is an $n \times n$ matrix whose sum of rows and columns are equal to 3. Moreover, permuting rows and columns of $M$ gives the same dessin type.

Finally, to get $u_n$, it is enough to find the number of $n\times n$ matrices of non-negative integers with the following restrictions:

1. The sum of each row and column is equal to $3$.

2. Two matrices are considered equal if one can be obtained by permuting rows and/or columns.

For example, for $n=2$ there are two different matrices as follows:

$$M_1=\begin{pmatrix}
1&2\\2&1
\end{pmatrix},\qquad M_2=\begin{pmatrix}
3&0\\0&3
\end{pmatrix}.$$

For $n=3$ there are five different matrices as follows:

$$\begin{pmatrix}
1&1&1\\1&1&1\\1&1&1
\end{pmatrix},\quad \begin{pmatrix}
0&1&2\\1&2&0\\2&0&1
\end{pmatrix}, \quad \begin{pmatrix}
0&1&2\\1&1&1\\2&1&0
\end{pmatrix}, \quad \begin{pmatrix}
1&2&0\\2&1&0\\0&0&3
\end{pmatrix}, \quad \begin{pmatrix}
3&0&0\\0&3&0\\0&0&3
\end{pmatrix}.$$

This problem is asymptotically solved in \cite{greenhill2013asymptotic} where the solution is  $$u_n\sim \frac{(3n)!}{6^{2n}(n!)^2} \exp(2 \minus \frac{2}{9n}+O(n^{\minus 2})).$$
\end{proof}

For small degrees, starting with $n=1$, $u_n$ is 1, 2, 5, 12, 31, 103, 383, 1731, 9273, 57563, 406465. This sequence is OEIS A232215 in the On-Line Encyclopedia of Integer Sequences.

\section{Maximal Dessins d'Enfants}\label{CH:5}

In this section, we define what a maximal dessin d'enfant is and state a sufficient condition that a non-maximal dessin d'enfant needs to have in order to be deformed to a maximal dessin d'enfant. Degtyarev could use only the maximal dessins to compute the braid monodromy and the fundamental group. Hence it is important to know when a non-maximal dessin can be deformed to a maximal one.

We first define the maximal dessins d'enfants.
\begin{definition}\em
A dessin d'enfant is maximal if it is connected, has no monochrome vertex and there is only one $\times$- vertex in each region.
\end{definition}

The number of $\times$- vertices in regions of a given dessin d'enfant is important to be maximal. We state the following proposition about the number of $\times$- vertices inside the regions of a dessin d'enfant:

\begin{proposition}\em
Inside a $2m$ gonal region of $\Gamma_C$ for $m\in \mathbb{Z}^{+}$, there is at least one $\times$- vertex and at most $m$ $\times$- vertices.
\end{proposition}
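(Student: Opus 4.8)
The plan is to exploit the factorization $j_C=\phi_2\circ\phi_1$ from Remark \ref{remark::rem1} and reduce the statement to a degree count for $\phi_1$ restricted to a single region. First I would recall from Proposition \ref{proposition::valuesofj} that the $\times$-vertices of $\Gamma_C$ are the points of $j_C^{-1}(\infty)=\phi_1^{-1}(\phi_2^{-1}(\infty))=\phi_1^{-1}(\{0,1,\infty\})$, and that $\phi_2^{-1}(\infty)=\{0,1,\infty\}$ are precisely the three $\times$-vertices of the cross-ratio graph $\Gamma_{cr}=\phi_2^{-1}([0,1])$. A short Euler-characteristic computation ($2$ $\bullet$-vertices, $3$ $\circ$-vertices, $6$ edges on $P^1$, hence $3$ regions) shows that $\Gamma_{cr}$ has exactly three bigonal regions, each containing exactly one of these $\times$-vertices.

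Next, since $\Gamma_C=\phi_1^{-1}(\Gamma_{cr})$, every region $R$ of $\Gamma_C$ is a connected component of $\phi_1^{-1}(R')$ for one of the bigonal regions $R'$ of $\Gamma_{cr}$, and $\phi_1|_R\colon R\to R'$ is a branched covering. I would then observe that the $\times$-vertices lying inside $R$ are exactly the preimages under $\phi_1|_R$ of the unique $\times$-vertex $\lambda_0\in\{0,1,\infty\}$ contained in $R'$; because $\lambda_0$ lies in the interior of $R'$, all of these preimages lie in the interior of $R$.

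The heart of the argument is to compute $d:=\deg(\phi_1|_R)$. Treating $R$ as a $2m$-gon (a closed topological disk whose boundary consists of $2m$ edges alternating between $\bullet$- and $\circ$-vertices), the map $\phi_1$ sends each boundary edge homeomorphically onto one of the two edges of $\partial R'$, so traversing $\partial R$ once wraps $\partial R'$ exactly $m$ times; since $\phi_1|_{\bar R}\colon\bar R\to\bar{R'}$ is a proper branched cover of disks, its degree equals this boundary winding number, i.e. $d=m$. Counting preimages of $\lambda_0$ with multiplicity then gives total multiplicity $m$, so there is at least one preimage (as $m\ge 1$) and at most $m$ distinct ones (with equality exactly when $\phi_1$ is unramified over $\lambda_0$ inside $R$), which is the claim.

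The main obstacle I anticipate is the topological bookkeeping that makes the ``boundary degree equals covering degree'' step rigorous: one must verify that each region $R$ is genuinely a disk bounded by a single $2m$-gon (so that $\partial R$ is connected and the winding number is well defined), and one must correctly account for ramification of $\phi_1$ at the $\times$-vertices themselves. Such ramification merges preimages and is exactly what pushes the number of distinct $\times$-vertices below $m$, while the surjectivity of $\phi_1|_R$ guarantees the count never drops below one.
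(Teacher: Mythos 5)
Your proposal is correct and follows essentially the same route as the paper: factor $j_C=\phi_2\circ\phi_1$, restrict $\phi_1$ to the $2m$-gonal region mapping onto a bigonal region of the cross-ratio graph, observe this restriction is an $m$-fold (branched) covering, and count preimages of the unique point over $\infty$, with ramification accounting for counts below $m$ and surjectivity guaranteeing at least one. The only difference is that you justify the degree-$m$ claim via the boundary winding argument, where the paper simply asserts it as clear.
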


\begin{proof}
Let $\mathfrak{d}(C)=n$ and $R_{2m}$ be a $2m$-gonal region in $\Gamma_C$. From Remark \ref{remark::rem1}, we know that the $j$-invariant of $C$, $j_C$, factors into two maps as $j_C=\phi_2 \circ \phi_1$ (see Figure \ref{fig:jcons}). Moreover, we have $\phi_2(c)=\infty$ when $c\in\{0,1,\infty\}$ and these three points lie in different regions in the cross-ratio graph (see Figure \ref{crossratio}).

Since $\phi_1$ is a covering, $\phi_1^{-1}$ lifts each region in the cross-ratio graph to regions in the dessin d'enfant. Let fix a region $R$ in the cross-ratio graph and assume that it lifts to $R_{2m}$. Let $\restr{\phi_1}{R_{2m}}:R_{2m} \rightarrow R$ be the restriction $\phi_1$ on $R_{2m}$ and $p\in \{0,1,\infty\}$ be the point in $R$ where $\phi_2(p)=\infty$. Clearly $\restr{\phi_1}{R_{2m}}$ is an $m$-fold covering. If $p$ is not in the ramification locus of $\restr{\phi_1}{R_{2m}}$, there are $m$ $\times$-vertices in $R_{2m}$ which are the lifts of $p$. If it is in the ramification locus, there are less than $m$ $\times$- vertices and if the index is equal to $m$, there is only one $\times$-vertex in $R_{2m}$.
\end{proof}

One of the main question here is whether a non-maximal dessin d'enfant can be maximized by deforming its dessin but fixing the isomorphism type. We first observe that the $\times$- vertices in a region of a dessin d'enfant actually comes from the same equation. For a given CRTC in the form $(y-y_1)(y-y_2)(y-y_3)=0$ where $y_1,y_2,y_3\in \mathbb{C}[x]$, define $S$ as the set of its singular fibers. Clearly, $S=S_{12}\cup S_{23} \cup S_{13}$ where $S_{12},S_{23}$ and $S_{13}$ come from $y_1=y_2$, $y_2=y_3$ and $y_1=y_3$ respectively. 

\begin{proposition}\label{Prop:maxx}\em
By following the same notations as before, fibers in $S_{12},S_{23}$ and $S_{13}$ lie in $BG, RG$ and $RB$ regions of $\Gamma_C$ respectively.
\end{proposition}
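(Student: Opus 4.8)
The plan is to trace each singular fiber through the factorization $j_C=\phi_2\circ\phi_1$ of Remark \ref{remark::rem1}, thereby reducing the statement to the position of the three $\times$-points of the cross-ratio graph $\Gamma_0:=\phi_2^{-1}([0,1])$. First I would compute the cross-ratio $\lambda=\phi_1(b)=(y_1(b),y_2(b);y_3(b),\infty)=\frac{y_1(b)-y_3(b)}{y_2(b)-y_3(b)}$ at a singular fiber $b$. If $b\in S_{12}$, then $y_1(b)=y_2(b)$ and the cross-ratio collapses to $\lambda=1$; if $b\in S_{23}$, then $y_2(b)=y_3(b)$ forces $\lambda=\infty$; and if $b\in S_{13}$, then $y_1(b)=y_3(b)$ gives $\lambda=0$. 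Since $\Gamma_C=j_C^{-1}([0,1])=\phi_1^{-1}(\Gamma_0)$ and $\phi_1$ is a branched covering that carries a region of $\Gamma_C$ onto a region of $\Gamma_0$ of the same color pair (this is exactly the mechanism established in the proof of Proposition \ref{RBG}), it suffices to decide which colored bigon of $\Gamma_0$ contains each of $\lambda=0,1,\infty$.

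The second and main step is therefore to locate $0,1,\infty$ among the $RB$, $BG$, $RG$ regions of $\Gamma_0$, which I would do on the real axis. Using Proposition \ref{proposition::valuesofj} together with the factored formula for $\phi_2$, the circle $P^1_{\mathbb{R}}$ carries $j=\infty$ exactly at $\lambda\in\{0,1,\infty\}$ (the $\times$-vertices) and $j=1$ exactly at $\lambda\in\{-1,0.5,2\}$ (the red, blue, green $\circ$-vertices), while $j>1$ at every other real value. Consequently no point of the open arcs of $P^1_{\mathbb{R}}\setminus\{-1,0.5,2\}$ meets $\Gamma_0$, so each such arc lies in the interior of a single region and its two endpoints are precisely the two $\circ$-vertices on that region's boundary. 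Reading off the cyclic order $-1,0,0.5,1,2,\infty$ on $P^1_{\mathbb{R}}$, the arc through $0$ joins the red vertex $-1$ to the blue vertex $0.5$, the arc through $1$ joins blue $0.5$ to green $2$, and the arc through $\infty$ joins green $2$ to red $-1$. Since each bigon has exactly two $\circ$-vertices, this identifies $\lambda=0$ with the $RB$ region, $\lambda=1$ with the $BG$ region, and $\lambda=\infty$ with the $RG$ region of $\Gamma_0$, consistent with Figure \ref{crossratio}. As an independent check, the $S_3$-symmetry of $j$ under the orbit (\ref{orbit}) confirms this: $\lambda\mapsto 1-\lambda$ swaps the $\times$-points $0,1$ and the colors red, green while fixing blue, which interchanges the $RB$ and $BG$ regions exactly as required.

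Combining the two steps gives the claim: $b\in S_{12}$ maps to $\lambda=1\in BG$, so $b$ lies in a $BG$ region; $b\in S_{23}$ maps to $\lambda=\infty\in RG$; and $b\in S_{13}$ maps to $\lambda=0\in RB$. I expect the only delicate point to be the second step, namely rigorously pinning the three $\times$-points to the correctly colored bigons rather than merely reading them off the figure; the real-axis argument above, reinforced by the symmetry check, is what makes this airtight. Everything else is a direct cross-ratio computation together with the region-preservation property of $\phi_1$ already recorded in the proof of Proposition \ref{RBG}.
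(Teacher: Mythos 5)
Your proof is correct and follows essentially the same route as the paper's: factor $j_C=\phi_2\circ\phi_1$, compute the cross-ratio at each singular fiber ($S_{12}\mapsto 1$, $S_{23}\mapsto\infty$, $S_{13}\mapsto 0$), and transfer the conclusion to $\Gamma_C$ via the region-color preservation established in Proposition \ref{RBG}. The only difference is that where the paper simply reads off from Figure \ref{crossratio} that $0,1,\infty$ sit in the $RB$, $BG$, $RG$ regions of the cross-ratio graph, you actually prove this placement with the real-axis argument (on $P^1_{\mathbb{R}}$ one has $j\geq 1$ with equality exactly at the $\circ$-vertices $-1,0.5,2$, so each real arc pins a $\times$-point between two specific vertex colors), which makes rigorous the one step the paper takes from the picture.
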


\begin{proof}
Each region type in a dessin maps under $\phi_1$ into the same region type in the cross-ratio graph. Hence, the singular fibers in $RB$, $BG$ and $RG$ in a dessin are lifts of $0,1$ and $\infty$ respectively. Remember the definition of the cross-ratio:
$$
\lambda=\frac{y_1-y_3}{y_2-y_3}.
$$ 
Hence, $\lambda=0$ when $y_1=y_3$, so the fibers in $S_{13}$ lie in $RB$ regions. Similarly, $\lambda=1$ when $y_1=y_2$, so the fibers in $S_{12}$ lie in $BG$ regions and $\lambda=\infty$ when $y_2=y_3$, so the fibers in $S_{13}$ lie in $RB$ regions.  
\end{proof}

\begin{proposition}\label{Prop:max}\em
Let $D:=j_C^{-1}(P_{\mathbb{R}}^1)$. Assume $\Gamma_C$ has a region $R$ that has more than one $\times$-vertices in it. If corresponding part of $R$ in $D$ has a monochrome vertex that is connected to all $\times$-vertices, these $\times$-vertices can be merged into one $\times$-vertex. 
\end{proposition}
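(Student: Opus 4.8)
The plan is to pass to the factorization $j_C = \phi_2 \circ \phi_1$ of Remark \ref{remark::rem1} and to reinterpret the merging of $\times$-vertices as a controlled increase of the ramification of $\phi_1$. First I would fix notation: let $F_0 := \phi_1(R)$ be the cross-ratio face covered by $R$, and let $\lambda_0$ be the unique point of $F_0$ with $\phi_2(\lambda_0)=\infty$, so that $\lambda_0 = 0, 1, \infty$ according to whether $R$ is of type $RB$, $BG$ or $RG$ (by Proposition \ref{proposition::valuesofj} and Proposition \ref{Prop:maxx}). If $R$ is a $2m$-gon, then $\phi_1|_{\bar R} : \bar R \to \bar F_0$ is an $m$-fold branched cover, and by the preceding proposition the $\times$-vertices of $R$ are exactly the fibre $(\phi_1|_{\bar R})^{-1}(\lambda_0)$; having more than one of them means that $\phi_1|_{\bar R}$ is not totally ramified over $\lambda_0$. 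Thus \emph{merging the $\times$-vertices into one} is equivalent to deforming $C$ so that $\phi_1|_{\bar R}$ becomes totally ramified over $\lambda_0$, i.e.\ so that its only preimage there acquires ramification index $m$.

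Next I would analyse the graph $D = j_C^{-1}(P^1_{\mathbb{R}}) = \phi_1^{-1}(G)$, where $G := \phi_2^{-1}(P^1_{\mathbb{R}})$. By Remark \ref{remark::rem1} the map $\phi_2$ is ramified only over $\{0,1,\infty\}$, hence it is unramified over $(0,1)$; therefore a monochrome vertex is precisely a ramification point of $\phi_1$ whose image $q$ lies in the interior of a $\bullet$--$\circ$ edge of $G$, while the edges of $D$ that are not edges of $\Gamma_C$ are the $\phi_1$-lifts of the arcs of $G$ over $(1,\infty)$ and $(-\infty,0)$. These arcs are exactly the ones joining $\lambda_0$ to the boundary $\bullet$- and $\circ$-vertices of $F_0$. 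Consequently, inside $\bar R$ the $\times$-vertices are joined to the boundary vertices and to the monochrome vertices through these lifted arcs, and the hypothesis says that some monochrome vertex $p$ lies on a connected subgraph $T \subset D \cap \bar R$ meeting every $\times$-vertex.

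The heart of the argument is a sliding deformation. I would construct a path in the parameter space $\mathfrak{T}_{d_1,d_2,d_3}$ along which the critical value $q = \phi_1(p)$ (together with the critical values at the other vertices of $T$) is moved within $G$ out of the interior $\bullet$--$\circ$ edge, through the adjacent boundary vertex of $F_0$, and then along the edge of $G$ joining that vertex to $\lambda_0$, until it arrives at $\lambda_0$ itself. Because $T$ is connected and touches every $\times$-vertex, pushing the ramification of $\phi_1$ onto $\lambda_0$ collapses $T$ to a single point: the $m$ sheets that were separated over $\lambda_0$ are tied together through the ramification carried along $T$, so all the $\times$-vertices of $R$ coalesce into one preimage of $\lambda_0$ of index $m$, while the cover over the rest of $F_0$, and the dessin in all other faces, is left combinatorially unchanged. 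Since the locus of CRTCs for which $\phi_1$ attains a prescribed critical value at $\lambda_0$ is cut out by algebraic conditions on the coefficients of $y_1,y_2,y_3$, this path is realized by an honest family of completely reducible trigonal curves, and the limit curve carries the desired dessin.

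The main obstacle I anticipate is the final bookkeeping: verifying that as $q$ is dragged to $\lambda_0$ the ramification transported along $T$ yields total ramification index exactly $m$ over $\lambda_0$ (rather than a partial merging), and that no uncontrolled collision of branch points occurs elsewhere, so that the intermediate maps remain genuine $j$-invariants of CRTCs with well-defined dessins and no triple intersections. This amounts to showing that the connectivity of $T$ is precisely the monodromy condition forcing the whole fibre over $\lambda_0$ to fuse, which I would establish by tracking the local monodromy of $\phi_1|_R$ around $q$ as it crosses $G$ and merges with $\lambda_0$. I would treat the case $\lambda_0 = \infty$ (regions of type $RG$, arising from $y_2 = y_3$) by first applying a M\"{o}bius change of chart carrying $\lambda_0$ to a finite point, after which the same local analysis applies.
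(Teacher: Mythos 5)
Your route is genuinely different from the paper's: you deform the abstract covering $\phi_1$ by transporting branch points along $G$, whereas the paper never leaves the algebraic side. Its proof is just this: by Proposition \ref{Prop:maxx} the $\times$-vertices of $R$ are roots of the single polynomial $y_i-y_j$ determined by the type of $R$, so the merging is a path in the coefficient space of that polynomial making the relevant roots collide, the monochrome vertex being cited as the reason this collision is an equisingular degeneration of $R$ alone. That framing sidesteps the two obligations your setup creates: realizability of the deformed covering by honest CRTCs, and control of the other branch points. The realizability gap is fillable but you only gesture at it ("algebraic conditions on the coefficients"); concretely, writing the deformed covering as $\phi_1^t=P_t/Q_t$ with $P_0=y_1-y_3$, $Q_0=y_2-y_3$ and setting $y_1^t=P_t+y_3$, $y_2^t=Q_t+y_3$, $y_3^t=y_3$ yields a family of CRTCs through $C$ inducing it, and this should be said explicitly.

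The genuine gap is in your collapsing step. You read the hypothesis as: $p$ lies on a connected subgraph $T\subset D\cap\bar{R}$ meeting every $\times$-vertex. With the closure $\bar{R}$ this is vacuous: every $\times$-vertex of $R$ is joined by lifted solid/dotted arcs to $\bullet$- and $\circ$-vertices on $\partial R$, and $\partial R$ is itself connected, so $D\cap\bar{R}$ is always connected and any monochrome vertex is "connected to" every $\times$-vertex in this sense. Under that reading the proposition would assert that the $\times$-vertices of any region can always be merged, which is essentially the paper's concluding conjecture, deliberately left open. Moreover the collapse fails on such a $T$: a boundary $\circ$-vertex lies over a vertex of $F_0$, which stays at fixed distance from $\lambda_0$ throughout your deformation, so an edge of $T$ passing through $\partial R$ never contracts, and a $\times$-vertex attached to $p$ only through the boundary survives as a separate preimage of $\lambda_0$ — you get a partial merging, not total ramification. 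What both the statement and your argument need is that $T$ lie in the open region: the monochrome vertices and all $\times$-vertices must be joined to one another by interior edges of $D\cap R$, i.e.\ lifts of sub-arcs of the arcs of $G$ ending at $\lambda_0$. With that reading, sliding all the critical values carried by $T$ into $\lambda_0$ contracts every edge of $T$, and the fused point automatically has index $m$ since the total local degree of $\phi_1|_{\bar{R}}$ over $\lambda_0$ is $m$; your monodromy bookkeeping then goes through.
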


\begin{proof}
Assume that $R$ has $m$ singular fibers. From Proposition \ref{Prop:maxx}, all $\times$-vertices in $R$ are actually zeros of a polynomial. Hence, it is enough to continuously deform the polynomial that it has only one zero with multiplicity $m$. Since there is a monochrome vertex in $R$ that is connected to all $\times$-vertices, $R$ is the equisingular degeneration of the same region but having one $\times$-vertex. 
\end{proof}

For example, the region in Figure \ref{maxifigure}-I is equisingular degeneration of the region in Figure \ref{maxifigure}-II. However, we do not know whether Figure \ref{maxifigure}-II can be perturbed to Figure \ref{maxifigure}-III.

\begin{figure}[ht!]
\centering
\includegraphics[width=120mm]{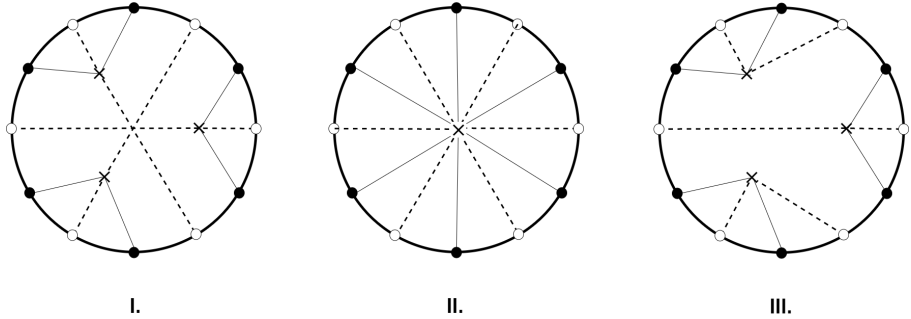}
\caption{Example to maximal and non-maximal regions}
\label{maxifigure}
\end{figure}

\begin{theorem}\em
A non-maximal dessin d'enfant can be deformed into a maximal dessin d'enfant if its regions that have more than one $\times$-vertex satisfy the condition in Proposition \ref{Prop:max}.
\end{theorem}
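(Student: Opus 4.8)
The plan is to reduce the global statement to a finite, region-by-region application of Proposition \ref{Prop:max}, and then to account for the two remaining clauses in the definition of maximality. Recall that a dessin fails to be maximal for exactly three reasons: it is disconnected, it carries a monochrome vertex, or it possesses a region with more than one $\times$-vertex. The hypothesis is tailored to the third defect, so the first thing I would do is list the finitely many regions $R_1,\dots,R_k$ of $\Gamma_C$ that contain more than one $\times$-vertex, and observe that by assumption each $R_i$ satisfies the hypothesis of Proposition \ref{Prop:max}: it contains a monochrome vertex of $D=j_C^{-1}(P^1_{\mathbb{R}})$ joined to every $\times$-vertex in $R_i$.

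Next I would run Proposition \ref{Prop:max} on each $R_i$ in turn. By Proposition \ref{Prop:maxx} the $\times$-vertices inside a fixed region are all zeros of a single one of the polynomials $y_a-y_b$, and the three families $S_{12},S_{23},S_{13}$ are distributed over the three region types $BG,RG,RB$; hence the $m$ singular fibers inside $R_i$ can be driven together into one $\times$-vertex by an equisingular deformation of that polynomial, and deformations belonging to regions of different color types are governed by different polynomials and can be performed in succession without interfering. I would concatenate these one-parameter families into a single path in $\mathfrak{T}_{d_1,d_2,d_3}$ and check that along it the isomorphism type of $\Gamma_C$ is held fixed while, inside each $R_i$, the singular fibers coalesce. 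At the end of this path every region contains exactly one $\times$-vertex.

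Finally I would settle the other two clauses. For the monochrome vertices: the monochrome vertex inside $R_i$ is exactly the ramification of $\phi_1$ that links the sheets carrying the separate singular fibers, so once those fibers coalesce the linking is accounted for by the branching over the single $\times$-vertex and the monochrome vertex is absorbed into it; a region that already held a single $\times$-vertex carries no such linking ramification, so no monochrome vertex survives anywhere. For connectedness, since the path in $\mathfrak{T}_{d_1,d_2,d_3}$ fixes the isomorphism type of the graph away from the coalescing fibers, the deformed dessin stays connected provided $\Gamma_C$ was. I expect the main obstacle to be precisely this last paragraph: making rigorous that the merge in $R_i$ genuinely eliminates the monochrome vertex rather than merely relocating the branch point, that the simultaneous deformations create no new monochrome vertices or disconnections elsewhere, and --- since fixing the isomorphism type cannot by itself repair disconnectedness --- that connectedness is either hypothesized or follows from the CRTC structure. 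The single-region statement of Proposition \ref{Prop:max} supplies the merge; the real work lies in controlling the whole family at once.
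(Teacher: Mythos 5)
Your treatment of the one-fiber-per-region condition is essentially the paper's: list the offending regions and apply Proposition \ref{Prop:max} to each, with Proposition \ref{Prop:maxx} guaranteeing that the merges in regions of different colors are governed by different polynomials. However, there is a genuine gap at the connectedness step, and you flagged it yourself without closing it. The theorem does not hypothesize connectedness, and connectedness does not follow from the CRTC structure (disconnected dessins of CRTCs occur already in maximal degree $2$, e.g.\ the combinatorial type $[2,2,2,2,2,2]$). The root of the problem is your standing assumption that the deformation must fix the isomorphism type of $\Gamma_C$. In this paper a ``deformation'' may pass through the four elementary moves --- indeed Proposition \ref{Prop:max} itself is a degeneration that merges $\times$-vertices --- and the paper repairs disconnectedness with exactly such a move: take two nearest $\circ$-vertices, one in each of two connected components, and merge them (the merging $\circ$-vertices move). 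This joins the two components, creates no monochrome vertex, and changes neither the number of regions nor the number of edges of each region, so the combinatorial class is preserved; iterating connects the whole dessin. Without this step (or some substitute for it), your argument proves the theorem only for dessins that happen to be connected, which is strictly weaker than the statement.

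A second, smaller point: your ``absorption'' paragraph treats the monochrome vertex in the hypothesis of Proposition \ref{Prop:max} as a vertex of the dessin that must be eliminated. It is not: that hypothesis places the vertex in $D=j_C^{-1}(P_{\mathbb{R}}^1)$, and since it is joined to $\times$-vertices inside a region, its $j$-image lies in $P_{\mathbb{R}}^1\setminus(0,1)$; monochrome vertices of $\Gamma_C=j_C^{-1}([0,1])$ are by definition ramification points with image in $(0,1)$, so the vertex you are worried about is never part of the dessin and does not obstruct maximality. This is why the paper's own proof counts only ``two conditions'' (one $\times$-vertex per region, and connectedness) rather than three.
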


\begin{proof}
There are two conditions needed to be satisfied to be maximal: having one singular fiber in each region and being connected. The first condition comes from Proposition \ref{Prop:max}. Secondly, assume that the dessin d'enfant is disconnected. To connect two disconnect components, take two nearest $\circ$-vertices, one from each and merge them. Since merging does not change the number or regions and also edges of each region, the dessin d'enfant is still in the same combinatorial class. Hence, one can connect all disconnected components in this way.
\end{proof}

Lastly, we conjecture that one may deform any non-maximal dessin to maximal ones.   
\begin{conjecture}\em
Every non-maximal dessin d'enfant can be deformed to a maximal dessin d'enfant.
\end{conjecture}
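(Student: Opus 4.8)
The plan is to verify the three defining properties of a maximal dessin — that it is connected, carries no monochrome vertex, and contains exactly one $\times$-vertex in every region — by separating the local (per-region) obstructions from the global (connectivity) one. First I would dispose of the region-level obstructions by a sequence of equisingular degenerations, and only afterwards repair connectivity by a global surgery that leaves the combinatorial type untouched.

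For the region-level step, let $R$ be any region of $\Gamma_C$ containing $m \geq 2$ singular fibers. By Proposition \ref{Prop:maxx} these $\times$-vertices are exactly the relevant common zeros of a single polynomial $y_i - y_j$ determined by the color type of $R$, so they all move together under a deformation of that one polynomial. The hypothesis of the theorem is precisely the hypothesis of Proposition \ref{Prop:max}: the part of $D = j_C^{-1}(P^1_{\mathbb{R}})$ lying in $R$ contains a monochrome vertex joined to all $m$ of these $\times$-vertices. I would then invoke Proposition \ref{Prop:max} directly and deform $y_i - y_j$ so that its $m$ relevant roots coalesce into a single root of multiplicity $m$. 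This merges the $m$ $\times$-vertices into one and, in the limit, absorbs the mediating monochrome vertex into the resulting singular fiber, so that $R$ becomes an equisingular degeneration of a region with a single $\times$-vertex and no monochrome vertex. Performing this once for each multi-fiber region produces a dessin in which every region contains exactly one $\times$-vertex, settling two of the three maximality conditions at once.

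It remains to guarantee connectivity. If the resulting dessin is disconnected, I would pick two components that bound a common region of the complement and, within that region, choose two nearest $\circ$-vertices of the same color, one on each component, and merge them by the elementary ``merging $\circ$-vertices'' move. A short Euler-characteristic count, as in the proof of Proposition \ref{numberofregions}, shows that identifying two $\circ$-vertices drops both the vertex count and the component count by one while fixing the edge count, so the number of regions and the number of boundary edges of each region — hence the combinatorial type — are unchanged; in particular no new monochrome vertex is introduced and each region still has a unique $\times$-vertex. Iterating this move until the graph is connected then yields a maximal dessin in the same combinatorial class.

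The main obstacle, in my view, is justifying that the purely combinatorial merge of Proposition \ref{Prop:max} is realized by an honest continuous path of completely reducible trigonal curves whose dessins remain in the realizable locus throughout — in other words, that the equisingular degeneration exists as a family of CRTCs and not merely as an abstract move on graphs. A secondary but genuine difficulty is controlling monochrome vertices that are not attached to any multi-fiber cluster: the hypothesis only governs regions with more than one singular fiber, so I would need to argue either that a region with a single $\times$-vertex carries no monochrome vertex, or that any stray monochrome vertex can be removed by a further monochrome modification without disturbing the counts already achieved. Finally, the color-compatibility of the $\circ$-vertex merge — choosing same-colored vertices that genuinely bound a common face — must be checked so that the connectivity surgery is a legitimate move rather than a formal identification.
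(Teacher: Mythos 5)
There is a genuine gap, and it is exactly the gap that makes this statement a conjecture rather than a theorem: the paper itself does not prove it. What the paper proves (the theorem at the end of Section \ref{CH:5}) is the \emph{conditional} statement that a non-maximal dessin can be deformed to a maximal one \emph{provided} every region with more than one $\times$-vertex contains a monochrome vertex of $D = j_C^{-1}(P^1_{\mathbb{R}})$ connected to all of those $\times$-vertices. Your region-level step imports precisely that hypothesis --- you write that ``the hypothesis of the theorem is precisely the hypothesis of Proposition \ref{Prop:max}'' --- but the conjecture supplies no such hypothesis. For an arbitrary non-maximal dessin, a multi-fiber region need not contain any monochrome vertex joining its $\times$-vertices, and in that case Proposition \ref{Prop:max} simply does not apply: the $\times$-vertices are still roots of a single polynomial $y_i - y_j$, but a naive coalescing of those roots is not known to be realizable by an equisingular path of CRTCs whose dessin stays in the given combinatorial class. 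The paper flags this obstruction explicitly: regarding Figure \ref{maxifigure}, it states ``we do not know whether Figure \ref{maxifigure}-II can be perturbed to Figure \ref{maxifigure}-III,'' and that unresolved case is the entire content of the conjecture.

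So your proposal, as written, proves the paper's sufficient-condition theorem (and does so along essentially the same lines as the paper: Proposition \ref{Prop:max} for the per-region merging, then merging $\circ$-vertices to restore connectivity), but it does not touch the conjecture's actual difficulty. To close the gap you would need one of two things: either an argument that every multi-fiber region of every dessin of a CRTC automatically admits such a connecting monochrome vertex in $D$ (which is false in general --- this is what Figure \ref{maxifigure}-II illustrates), or a genuinely new deformation argument that merges $\times$-vertices, or otherwise redistributes them one per region, without passing through the monochrome-vertex configuration. Your own closing paragraph correctly identifies the realizability of degenerations by honest families of CRTCs as the main obstacle; the point is that under the conjecture's hypotheses this obstacle is not a technical loose end but the open problem itself.
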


\section{Conclusion}

In this paper, we describe some useful properties of the dessin d'enfant of a CRTC, find an upper bound for the number of the combinatorial types and list all possible types for the trigonal curves with a fixed maximal degree. We also give the exact types for the maximal degrees $1,2,3,4$. 

Moreover, we show how dessins deform and the role of simple dessins in the deformation space. We also prove that a non-maximal dessin can be deformed to a maximal one if the regions that have more than one $\times$-vertices have a monochrome vertex that connects them. 

We can list some future works in this area as follows:

(1) We plan to compute the fundamental group and Alexander polynomial of a trigonal curve using its dessin d’enfant. This idea has partially done in \cite{degtyarev2012topology} for the trigonal curves having maximal dessin. Our goal is expanding this idea for any dessin d'enfant.

(2) We want to understand the relation between the combinatorial types and the isomorphism classes. We already have preliminary results for this relation for the maximal degree $2$. 

(3) Another future task is to investigate the deformation space of dessins d'enfants in more detailed. We already developed an algorithm to compute a 1-parameter family of dessins, which can be used to study the deformation space. 

\section*{Acknowledgement}
I would like to thank to E. Hironaka for her encouragement and many fruitful discussions on this research.

\bibliographystyle{plain}
\bibliography{ClassificationDessin.bbl}

\end{document}